\newcommand{\newsectionstyle}{%
	\renewcommand{\@secnumfont}{\bfseries}
	\renewcommand\section{\@startsection{section}{2}%
	  \z@{.5\linespacing\@plus.7\linespacing}{-.5em}%
	  {\normalfont\bfseries}}%
}
\let\oldsection\section%
\let\old@secnumfont\@secnumfont%
\newcommand{\originalsectionstyle}{%
\let\@secnumfont\old@secnumfont
\let\section\oldsection
}
\numberwithin{equation}{section}
 \newcommand{\RR}{\mathbb{R}}
\newcommand{\CC}{\mathbb{C}} 
\newcommand{\NN}{\mathbb{N}}
\newcommand{\rplus}{\mathbb{R}_{>0}}
\newcommand{\disp}{\displaystyle}
\DeclarePairedDelimiter{\bracepair}{\lbrace}{\rbrace}
\DeclarePairedDelimiter{\parenpair}{(}{)}
\DeclarePairedDelimiter{\vertpair}{\vert}{\vert}
\DeclarePairedDelimiter{\Vertpair}{\Vert}{\Vert}
\newcommand{\abs}[1]{\vertpair*{#1}}
\newcommand{\card}[1]{\vertpair{#1}}
\newcommand{\supnorm}[1]{\Vertpair*{#1}_{\infty}}
\newcommand{\bd}[1]{\partial #1}
\renewcommand{\Re}{\operatorname{Re}}
\renewcommand{\Im}{\operatorname{Im}}
\newcommand{\metricball}[2]{\mathbb{D}_{#2}\left(#1\right)}
\newcommand{\declare}{\equiv}
\DeclareMathOperator{\sgn}{sign}
\newtheorem{thm}{Theorem}[section]
\newtheorem{cor}[thm]{Corollary}
\newtheorem{lem}[thm]{Lemma}
\newtheorem{prop}[thm]{Proposition}
\theoremstyle{definition}  }
\newtheorem{rem}[thm]{Remark}}
\theoremstyle{definition} \newtheorem{smallrem}[thm]{Remark}}
\theoremstyle{remark} }
\newtheorem{bigclm}[thm]{Claim}
{\theoremstyle{remark} }
\newcommand{\cvec}{\mathbf{c}}
\newcommand{\cveclong}{(c_1, c_2, \dotsc, c_n)}
\newcommand{\cvecgeom}{\gamma}
\newcommand{\cvecmin}{D_*}
\newcommand{\cvecmax}{D^*}
\newcommand{\dvec}{\mathbf{d}}
\newcommand{\dshort}[3]{\bracepair*{#1, #2; #3}}
\newcommand{\dlong}[3]{\bracepair*{\left( #1, #2 \right)}_{j = 0}^{#3}}
\newcommand{\dgroupideal}{\mathbf{d^*}}
\newcommand{\specw}{\widetilde{w}} 
\newcommand{\cspace}[3]{C(#1; #2, #3)}
\newcommand{\dspace}[3]{D(#1; #2, #3)}
\newcommand{\countnroots}[1]{\nu_- \parenpair*{#1}}
\newcommand{\countzroots}[1]{\nu_0 \parenpair*{#1}}
\newcommand{\countproots}[1]{\nu_+ \parenpair*{#1}}
\newcommand{\countpzroots}[1]{\overline{\nu} \parenpair*{#1}}
\newcommand{\countnrootstime}[1]{\omega_- \parenpair*{#1}}
\newcommand{\countzrootstime}[1]{\omega_0 \parenpair*{#1}}
\newcommand{\countprootstime}[1]{\omega_+ \parenpair*{#1}}
\newcommand{\countpzrootstime}[1]{\overline{\omega} \parenpair*{#1}}
\newcommand{\cvecideal}{\mathbf{c}^*}
\newcommand{\abaverage}{\widetilde{d}}
\newcommand{\permmatspecial}{\Sigma_*}
\newcommand{\permmat}{\Sigma}
\newcommand{\sympoly}[2]{\sigma_{#1}(#2)}
\newcommand{\wall}{\mathop{Wall}}
\newcommand{\ourbox}{\mathop{Box}}
\newcommand{\extplane}{E_{\text{ext}}}
\newcommand{\diver}{diversity}
\newcommand{\cvecext}{\mathbf{c}_{\text{ext}}}
\newcommand{\setn}{\overline{n}}
\newcommand{\rdisk}[1]{\mathbb{D}_{#1}}
\DeclareMathOperator{\diag}{diag}
\newcommand{\ann}[2]{\mathop{Ann}\left( #1, #2 \right)}
\newcommand{\wtder}{\dot{w}}
\newcommand{\wttder}{\dot{\widetilde{w}}}
\newcommand{\atder}{\dot{d_0}}
\newcommand{\btder}{\dot{d_q}}
\newcommand{\smallterm}{\Delta}
\title[Localization of eigenvalues]{Localization of eigenvalues of Doubly Cyclic Matrices}
\author{Charles E.~Baker}
\address{The Ohio State University, Columbus, Ohio, USA}
\email{\url{baker.1656@osu.edu}}
\author{Boris S.~Mityagin}
\address{The Ohio State University, Columbus, Ohio, USA}
\email{\url{mityagin.1@osu.edu} \\
\url{boris.mityagin@gmail.com}}
\date{\today}
\subjclass[2010]{26C10 (Primary), 15A42 (Secondary)}
\begin{document}
\begin{abstract}
For a family of doubly cyclic matrices of the form $\eqref{eq:dczplusex}$, a maximum for the number of eigenvalues in the left half-plane is attained by $X_* \in \eqref{eq:idealmatrix}$, with $\alpha,\beta \in \eqref{eq:detdczp}$.  This confirms a conjecture of C. Johnson, Z. Price, and I. Spitkovsky.

Moreover, the complete range of possibilities for the number of eigenvalues in the left half-plane is demonstrated: if $\alpha < \beta$, then any odd number between $1$ and the maximum, inclusive, is attainable.

\end{abstract}
\maketitle
\normalem
\everymath{\displaystyle}

\newsectionstyle

\section{Introduction}
For $n \in \NN$, $n \geq 2$, we consider matrices $X \in M_n(\RR)$ of a particular form.  Defining $\rplus = (0, \infty)$, and fixing vectors $\mathbf{a} = (a_1, \dotsc, a_n)$ and $\mathbf{b} = (b_1, \dotsc, b_n)$ in $(\rplus)^n$, we study the matrix
\begin{equation} \label{eq:dczplusex}
 X = \begin{pmatrix}
   a_1 &  - b_1 &     0  & \cdots  & 0\\
     0 &    a_2 &  - b_2 & \cdots  & 0\\
\vdots &        & \ddots & \ddots  & \\
     0 & \cdots & \cdots & a_{n-1} & -b_{n-1}\\
  -b_n &      0 & \cdots &       0 & a_n
\end{pmatrix} \, .
\end{equation} 
Since
\begin{equation} \label{eq:detdczp}
\det X  = \alpha^n - \beta^n, \quad \alpha \declare \left( \prod_{k = 1}^n a_k \right)^{1/n} \, \, , \quad  \beta \declare  \left( \prod_{k = 1}^n b_k \right)^{1/n}
\end{equation}
the geometric means of the $a_k$'s and $b_k$'s play a key role.  We let $DC(\alpha, \beta)$ denote the set of matrices of the form \eqref{eq:dczplusex} with given geometric mean $\alpha$ for the $a_k$'s and $\beta$ for the $b_k$'s.  

Inspired by the occurrence of such matrices in the previous paper \cite{JeffetalGenetics}, C. Johnson, Z. Price, and I. Spitkovsky, in \cite{JPS2013Distr}, consider the number of eigenvalues of such a matrix in the left half-plane.  In particular, they note that for several cases (when $n \leq 4$, or $\cos\left(\frac{2\pi}{n} \right) < \frac{\alpha}{\beta} < 1$), the number of eigenvalues in the left-half-plane is the same as that for $\alpha I - \beta \permmatspecial$.  Here, $I$ is the identity $n \times n$ matrix and 
\begin{equation} \label{eq:pdef}
 \permmatspecial = \begin{pmatrix}
   0   &      1 &     0  & \cdots  & 0\\
     0 &      0 &     1  & \cdots  & 0\\
\vdots &        & \ddots & \ddots  & \\
     0 & \cdots & \cdots &      0  & 1\\
     1 &      0 & \cdots &      0  & 0
\end{pmatrix}
\end{equation} 
is the relevant permutation $n \times n$ matrix.

Numerical evidence presented in \cite{JPS2013Distr} suggests that in general, the number of eigenvalues in the left half-plane for any matrix in $DC(\alpha, \beta)$ is bounded above by the corresponding value for $\alpha I - \beta \permmatspecial$.  In this paper, we prove this conjecture.

\begin{thm} \label{thm:eigencounts}
Fix $n \in \NN$, $n \geq 2$.  Fix $\alpha, \beta \in \rplus$.  Let $X \in DC(\alpha, \beta)$.  Then the number of eigenvalues of $X$ with negative real part does not exceed the number of eigenvalues of $\alpha I - \beta \permmatspecial$ with negative real part, and setting $X = \alpha I - \beta \permmatspecial \in DC(\alpha, \beta)$ allows us to attain this upper bound as a maximum among all elements of $DC(\alpha, \beta)$.  
\end{thm}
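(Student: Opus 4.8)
The plan is to pass to the characteristic polynomial and then run a one-parameter homotopy in the product $\beta^n = \prod_k b_k$. Expanding $\det(X - \lambda I)$ over permutations, the only nonzero contributions come from the identity and the full $n$-cycle, so
\[
\det(X - \lambda I) = \prod_{k=1}^n (a_k - \lambda) - \prod_{k=1}^n b_k = \prod_{k=1}^n(a_k - \lambda) - \beta^n ;
\]
in particular the spectrum of $X$ depends only on $(a_1,\dots,a_n)$, subject to $\prod a_k = \alpha^n$, and on $\beta$. Write $g(y) := \prod_k(a_k - iy)$; then $|g(y)| = \prod_k\sqrt{a_k^2+y^2}$ is strictly increasing in $|y|$ with $|g(0)| = \alpha^n$, while $\arg g(y) = -\sum_k \arctan(y/a_k)$ is strictly monotone in $y$. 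The case $\alpha \geq \beta$ is immediate: if $\Re\lambda \leq 0$ then $|a_k-\lambda|\geq a_k$, so $|\prod_k(a_k-\lambda)|\geq\alpha^n\geq\beta^n$ with equality only at $\lambda = 0$, $\alpha=\beta$; hence no eigenvalue of $X$ has negative real part, and neither does $\alpha I - \beta\permmatspecial$ (whose eigenvalues are $\alpha - \beta e^{2\pi ij/n}$). So assume $\alpha < \beta$ henceforth.

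For $s \in [0,\beta^n]$ put $q_s(\lambda) := \prod_k(a_k-\lambda) - s$, and let $N(s)$ be the number of its roots with negative real part; since the roots of $q_0$ are $a_1,\dots,a_n > 0$ we have $N(0) = 0$, and $N(\beta^n)$ is the quantity to bound. A root of $q_s$ lies on the imaginary axis at $\lambda = iy$ exactly when $g(y) = s$; as $s > 0$ this forces $\arg g(y) \in 2\pi\ZZ$, so $y = 0$ (whence $s = \alpha^n$) or $y = \pm y_j$, where for each integer $j$ with $1 \le j < n/4$ the number $y_j > 0$ is the unique solution of $\sum_k\arctan(y/a_k) = 2\pi j$, and then $s = |g(y_j)| = \prod_k\sqrt{a_k^2 + y_j^2}$. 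Since $|g|$ is strictly increasing, $\alpha^n = |g(y_0)| < |g(y_1)| < |g(y_2)| < \cdots$. I would next read off the direction of crossings: differentiating $q_s(\lambda(s)) = 0$ in $s$ (using $\partial_s q_s \equiv -1$) gives $\lambda'(s) = 1/q_s'(\lambda)$, and at $\lambda = iy_j$ one finds $q_s'(iy_j) = -s\sum_k (a_k+iy_j)/(a_k^2+y_j^2)$, with real part $-s\sum_k a_k/(a_k^2+y_j^2) < 0$; since $\Re(1/z)$ and $\Re z$ have the same sign, $\Re\lambda'(s) < 0$ at every crossing (and the crossing at $y_0 = 0$, at $s = \alpha^n$, is also left-going). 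Hence each root of $q_s$ crosses the imaginary axis at most once, always from right to left, so as $s$ runs from $0$ to $\beta^n$ exactly one root enters the open left half-plane at $s = \alpha^n$ and a conjugate pair enters it at each $s = |g(y_j)| < \beta^n$; therefore
\[
N(\beta^n) = 1 + 2m, \qquad m := \#\{\, 1 \le j < n/4 \ :\ |g(y_j)| < \beta^n \,\}.
\]

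The crux is that $\alpha I - \beta\permmatspecial$ minimizes each $|g(y_j)|$ over $DC(\alpha,\beta)$. With $u_k := y_j/a_k > 0$ and $\theta_k := \arctan u_k \in (0,\pi/2)$, the defining relation for $y_j$ reads $\sum_k\theta_k = 2\pi j$, and using $\prod a_k = \alpha^n$,
\[
|g(y_j)| = \prod_k a_k\sqrt{1+u_k^2} = \alpha^n\prod_k\sec\theta_k = \alpha^n\exp\Bigl(-\textstyle\sum_k\log\cos\theta_k\Bigr).
\]
Since $\theta\mapsto-\log\cos\theta$ is convex on $(0,\pi/2)$, Jensen's inequality gives $|g(y_j)| \geq \alpha^n\sec^n(2\pi j/n)$, with equality iff $a_1=\dots=a_n=\alpha$. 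Hence $|g(y_j)| < \beta^n$ forces $\cos(2\pi j/n) > \alpha/\beta$, so $m \le m_* := \#\{\,1\le j<n/4 : \cos(2\pi j/n) > \alpha/\beta\,\}$. Finally, among the eigenvalues $\alpha - \beta e^{2\pi ij/n}$ of $\alpha I - \beta\permmatspecial$ exactly $M = 1 + 2m_*$ have negative real part ($j = 0$ contributes $1$, the symmetry $j \leftrightarrow n-j$ the factor $2$). Thus $N(\beta^n) = 1 + 2m \le 1 + 2m_* = M$ for every $X \in DC(\alpha,\beta)$, and for $X = \alpha I - \beta\permmatspecial$ every application of Jensen is an equality, so $N(\beta^n) = M$: the maximum is attained.

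The step I expect to be the real obstacle is the global bookkeeping in the homotopy — verifying that the imaginary axis is met only at the parameter values $s \in \{\alpha^n\} \cup \{|g(y_j)|\}$ and that every crossing is strictly left-going, so that $N(\cdot)$ is a monotone step function on $[0,\beta^n]$. Granting that, the determinant identity, the convexity estimate, and the evaluation of $M$ are each short.
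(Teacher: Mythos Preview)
Your proof is correct and follows a genuinely different route from the paper's. The paper normalizes $b_k \equiv \beta$ and homotopes the vector $\mathbf{c} = (a_k/\beta)_k$ toward the constant vector $(\gamma,\ldots,\gamma)$ along a piecewise-smooth path that step by step reduces the number of distinct entries; the technical core is a delicate Wall-crossing argument showing that along this path no root of $\prod_k(z+c_k) = 1$ can escape the right half-plane. You instead freeze the $a_k$ and vary the single real scalar $s = \beta^n$ from $0$ upward, so the entire crossing analysis collapses to locating the $y$ at which $g(y) = \prod_k(a_k - iy)$ is real and positive. The decisive difference is your comparison step: rather than deforming $\mathbf{c}$ to $\mathbf{c}^*$, you bound each crossing threshold $|g(y_j)|$ from below by Jensen's inequality for the convex function $-\log\cos$ on $(0,\pi/2)$, which pins the extremal configuration $a_1=\cdots=a_n$ in one line. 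This is shorter and more elementary; what the paper's construction buys in exchange is monotonicity of the zero count along an explicit path in $\mathbf{c}$-space, which is exactly what it later exploits to determine the full range of attainable counts. Your closing worry about the global bookkeeping is unnecessary: the crossing values $\alpha^n < |g(y_1)| < |g(y_2)| < \cdots$ are strictly increasing and hence distinct, each imaginary-axis root is simple by your own computation of $q_s'(iy)$, and the argument principle over a large left half-disk then makes $N(s)$ a locally constant step function away from those values, with precisely the jumps you describe.
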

See also Remark~\ref{rem:wlogrem} for an extension of the claim of this theorem.

Conjugating with nonsingular matrices preserves the spectrum.  We conjugate $X$ with the diagonal matrix 
\begin{equation} \label{eq:qdef}
 Q = \begin{pmatrix}
1 & 0 & & \dotsc & 0\\
0 & \disp \frac{\beta}{b_1} & 0 & \dotsc & 0\\
0 & 0 & \frac{\beta}{b_1} \cdot \frac{\beta}{b_2} & \dotsc & 0\\
\vdots & \vdots & \vdots & \ddots & 0\\
0 & 0 & \cdots & 0 & \disp \prod_{k = 1}^{n-1} \frac{\beta}{b_k}
\end{pmatrix} \, .
\end{equation} 
Notice that $1 = \prod_{k = 1}^n \frac{\beta}{b_k}$.  Then with  $c_k = \frac{a_k}{\beta}$, $1 \leq k \leq n$,
\begin{equation} \label{eq:xtransform} 
\begin{split}
Q^{-1} X Q = \begin{pmatrix}
   a_1 & -\beta &     0  & \cdots  & 0\\
     0 &    a_2 & -\beta & \cdots  & 0\\
\vdots &        & \ddots & \ddots  & \\
     0 & \cdots & \cdots & a_{n-1} & -\beta\\
-\beta &      0 & \cdots &       0 & a_n
\end{pmatrix} &= \beta  \begin{pmatrix}
   c_1 &     -1 &     0  & \cdots  & 0\\
     0 &    c_2 &     -1 & \cdots  & 0\\
\vdots &        & \ddots & \ddots  & \\
     0 & \cdots & \cdots & c_{n-1} & -1\\
    -1 &      0 & \cdots &       0 & c_n
\end{pmatrix}  \\
&\declare \beta \widetilde{X}.
\end{split}
\end{equation}  
The factor $\beta > 0$ rescales the spectrum, but does not change the signs of the real parts of the points of the spectrum; therefore, we study $\widetilde{X}$. We have simplified the parameter scheme to an $n$-parameter system $\cvec = \cveclong \in (\rplus)^n$, with geometric mean 
\begin{equation} \label{eq:cvecgeommean}
\begin{split}
\cvecgeom & = \left( \prod_{k = 1}^n c_k \right)^{1/n} = \left( \frac{ \disp \prod_{k = 1}^n a_k}{\beta^n} \right)^{1/n} = \frac{\alpha}{\beta}.
\end{split}
\end{equation}

By cofactor expansion down the first row, 
\begin{equation} \label{eq:evalform}
\begin{split}
\det(\widetilde{X} - \lambda I) &= (c_1 - \lambda) \det \begin{pmatrix} c_2 - \lambda &     -1 & \cdots  & 0\\
        & (c_3 - \lambda) & -1 & \\
 0 & \cdots & (c_{n-1} - \lambda) & -1\\
      0 & \cdots &       0 & (c_n - \lambda) \end{pmatrix}\\
      & \qquad   + (-1)^{n-1} \cdot (-1) \cdot \det \begin{pmatrix} -1 & 0 & \dotsc & 0\\
0 & -1 & 0 & 0\\
0 & 0 & \ddots & 0\\
0 & \dotsc & 0 & -1
\end{pmatrix}\\
& =  \prod_{k = 1}^n (c_k - \lambda) + (-1)^n (-1)^{n-1}\\
& = \prod_{k = 1}^n (c_k - \lambda) - 1.
\end{split}
\end{equation}
As a matter of technical convenience for the later proof, we rewrite $-\lambda$ as $z$.  Thus, we analyze the roots of an algebraic equation 
\begin{equation}\label{eq:maineqn}
P(z) = 1, \quad \text{where} \quad P(z) \declare P(z; \cvec) =  \prod_{k = 1}^n (c_k + z).
\end{equation}
We define
\begin{subequations} \label{eq:Esets}
\begin{align}
E^- &\declare \bracepair{\xi \in \CC: \Re \xi < 0},\\
E^0 &\declare \bracepair{\xi \in \CC: \Re \xi = 0},\\
E^+ &\declare \bracepair{\xi \in \CC: \Re \xi > 0},
\end{align}
\end{subequations}
and
\begin{equation} \label{eq:Eext}
E \declare \overline{E^+} = \bracepair{\xi \in \CC: \Re \xi \geq 0}.
\end{equation}
Let $\countnroots{\cvec}$ (respectively $\countzroots{\cvec}$, $\countproots{\cvec}$, $\countpzroots{\cvec}$) denote the number of solutions to \eqref{eq:maineqn} in $E^-$ (respectively $E^0$, $E^+$, $E$), counted with multiplicity.

If in the above construction, $X = X_* \declare \alpha I - \beta \permmatspecial$, i.e.,
\begin{equation} \label{eq:idealmatrix}
X_*= \begin{pmatrix}
   \alpha & -\beta &     0  & \cdots  & 0\\
     0 &   \alpha & -\beta & \cdots  & 0\\
\vdots &        & \ddots & \ddots  & \\
     0 & \cdots & \cdots & \alpha & -\beta\\
-\beta &      0 & \cdots &       0 & \alpha
\end{pmatrix} \, \, ,
\end{equation}
then $\widetilde{X} = \cvecgeom I - \permmatspecial$, and the characteristic polynomial of $\widetilde{X}$ is
\begin{equation}
\det(z + \widetilde{X}) = (z + \cvecgeom)^n - 1, \quad \cvecgeom = \frac{\alpha}{\beta}.
\end{equation}
Letting $\cvecideal \declare (\cvecgeom, \cvecgeom, \dotsc, \cvecgeom)$, we have the algebraic equation 
\begin{equation} \label{eq:maineqnspecial}
P^*(z) = 1, \quad \text{where} \quad P^*(z) \declare P(z; \cvecideal) = (\cvecgeom + z)^n.
\end{equation}

The set of solutions to \eqref{eq:maineqnspecial} is
\begin{equation} \label{eq:specialsolform}
\bracepair*{- \cvecgeom + \omega^k: 0 \leq k < n}, \quad \text{where} \quad \omega = \exp \left(\frac{2\pi i}{n}\right),
\end{equation}
and therefore we have (for $\cvecgeom > 0$)
\begin{subequations} \label{eq:specialsolcountform}
\begin{align}
\countproots{\cvecideal} & = \# \bracepair*{k: 0 \leq k < n: \cos \left( \frac{2 \pi k}{n} \right) > \cvecgeom} \, ,\\
\countpzroots{\cvecideal}& = \# \bracepair*{k: 0 \leq k < n: \cos \left(\frac{2 \pi k}{n} \right) \geq \cvecgeom}\, .
\end{align}
\end{subequations}

Thus, we have reduced the main theorem to the following proposition, as we are interested in counting the number of roots of \eqref{eq:maineqn} and \eqref{eq:maineqnspecial} with positive real part.  

\begin{thm} \label{thm:maincount}
Fix $n \in \NN$.  If $\cvec = \cveclong \in (\rplus)^n$, then
\begin{equation} \label{eq:countineq}
\countproots{\cvec} \leq \countproots{\cvecideal}, \quad \countpzroots{\cvec} \leq \countpzroots{\cvecideal}.
\end{equation}
\end{thm}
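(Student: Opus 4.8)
The plan is to obtain exact formulas for $\countproots{\cvec}$ and $\countpzroots{\cvec}$ via the argument principle, and then reduce both inequalities in \eqref{eq:countineq} to a single convexity inequality. The range $\cvecgeom \geq 1$ is immediate: if $\Re z \geq 0$ then $\abs{c_k + z} \geq c_k + \Re z \geq c_k$, so $\abs{P(z)} \geq \prod_k c_k = \cvecgeom^n \geq 1$, with equality forcing $\cvecgeom = 1$ and $z = 0$; hence $\countproots{\cvec} = \countproots{\cvecideal} = 0$ and $\countpzroots{\cvec} = \countpzroots{\cvecideal}$. So assume $0 < \cvecgeom < 1$.

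For $t \in \RR$ write $P(it) = R(t)\,e^{i\Psi(t)}$ with $R(t) = \prod_{k=1}^n\sqrt{c_k^2 + t^2}$ and $\Psi(t) = \sum_{k=1}^n\arctan(t/c_k)$. Then $R$ is even, strictly increasing on $[0,\infty)$, with $R(0) = \cvecgeom^n < 1$, while $\Psi$ is odd and a strictly increasing bijection of $\RR$ onto $(-n\pi/2,\, n\pi/2)$; let $t_* > 0$ be the unique positive number with $R(t_*) = 1$, i.e.\ $\prod_k(c_k^2 + t_*^2) = 1$. Applying the argument principle to $F(z) = P(z) - 1$ over $\partial\bracepair*{\Re z > 0,\ \abs z < M}$ and letting $M \to \infty$: the large semicircle contributes $n/2$ (since $F(z) \sim z^n$), and, factoring $F(it) = P(it)\bigl(1 - 1/P(it)\bigr)$ and carefully tracking orientations, one finds that $\countproots{\cvec}$ is the negative of the winding number, about the point $1$, of the closed curve $t \mapsto 1/P(it) = R(t)^{-1}e^{-i\Psi(t)}$. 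That curve spirals clockwise ($-\Psi$ strictly decreasing from $n\pi/2$ to $-n\pi/2$) with modulus $R^{-1}$ unimodal, maximal equal to $\cvecgeom^{-n} > 1$ at $t = 0$; its crossings of the ray $(1,\infty)$ occur exactly at the $t$ with $\Psi(t) \in 2\pi\ZZ$ and $R(t) < 1$, each contributing $-1$. Since $R(t) < 1 \iff \abs t < t_*$ and $\Psi$ is an increasing bijection,
\begin{equation*}
\countproots{\cvec} = \#\bracepair*{t : \Psi(t) \in 2\pi\ZZ,\ \abs t < t_*} = \#\bracepair*{m \in \ZZ : \abs m < \Psi(t_*)/(2\pi)},
\end{equation*}
and, since the only candidates for roots of $F$ on the imaginary axis are $\pm i t_*$ — roots precisely when $\Psi(t_*) \in 2\pi\ZZ$ — one likewise gets $\countpzroots{\cvec} = \#\bracepair*{m \in \ZZ : \abs m \leq \Psi(t_*)/(2\pi)}$. (These formulas are proved first for $\cvec$ with $P(it) \neq 1$ for all real $t$; the remaining $\cvec$ are handled by perturbing within the slice $\bracepair*{\prod_k c_k = \cvecgeom^n}$ to such a generic point and using that $\countproots{\,\cdot\,}$ is lower semicontinuous.)

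Both right-hand sides are nondecreasing in $\Psi(t_*)/(2\pi)$, so it suffices to show $\Psi(t_*)$ is largest at $\cvec = \cvecideal$, where $R(t) = (\cvecgeom^2 + t^2)^{n/2}$ gives $t_* = \sqrt{1-\cvecgeom^2}$ and $\Psi(t_*) = n\arctan\bigl(\sqrt{1-\cvecgeom^2}/\cvecgeom\bigr) = n\arccos\cvecgeom$ (matching \eqref{eq:specialsolcountform}). Thus, with $t_* > 0$ determined by $\prod_{k=1}^n(c_k^2 + t_*^2) = 1$, the whole theorem reduces to proving
\begin{equation*}
\sum_{k=1}^n \arctan(t_*/c_k) \leq n\arccos\cvecgeom .
\end{equation*}
Put $\phi_k = \arctan(t_*/c_k) \in (0,\pi/2)$, so $c_k = t_*\cot\phi_k$; the constraint becomes $t_*^{\,n} = \prod_k\sin\phi_k$, whence $\cvecgeom^n = \prod_k c_k = \prod_k\cos\phi_k$. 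Since $\cos$ restricts to a decreasing bijection $[0,\pi/2]\to[0,1]$, the displayed inequality is equivalent to $\cos\bigl(\tfrac1n\sum_k\phi_k\bigr) \geq \bigl(\prod_k\cos\phi_k\bigr)^{1/n}$, i.e.\ to $\tfrac1n\sum_k\log\cos\phi_k \leq \log\cos\bigl(\tfrac1n\sum_k\phi_k\bigr)$ — Jensen's inequality for $\log\cos$, which is concave on $(0,\pi/2)$ because $(\log\cos\phi)'' = -\sec^2\phi < 0$; equality forces all $\phi_k$, hence all $c_k$, to coincide, i.e.\ $\cvec = \cvecideal$.

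The conceptual core is this last step — the concealed concavity of $\log\cos$ — which makes the estimate essentially free once the reduction is in place. I expect the main obstacle to be that reduction: pinning down the orientation and sign conventions so that $\countproots{\cvec}$ comes out exactly as that winding number and the crossing count is right, and — most delicately — handling the degenerate configurations (those with $\Psi(t_*) \in 2\pi\ZZ$, i.e.\ $F$ vanishing on the imaginary axis), where the contour must be indented around $\pm i t_*$, or a density/semicontinuity argument invoked, so that the counting formulas remain valid there too.
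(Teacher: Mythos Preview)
Your argument is correct and takes a genuinely different route from the paper. The paper proves Theorem~\ref{thm:maincount} by constructing, for each $\cvec_0$, a piecewise-analytic path $\cvec(t)$ from $\cvec_0$ to $\cvecideal$ inside the level set $\{\prod_k c_k=\cvecgeom^n\}$ (Sections~\ref{sec:ctod}--\ref{sec:movement}); it then invokes the Implicit Function Theorem to track each right-half-plane root $w(t)$ of $P(z;\cvec(t))=1$ and shows, via an explicit computation of $\Re\dot w(t)$ near the imaginary axis (Claim~\ref{clm:posder}), that no such root can cross into $E^-$. This yields not only \eqref{eq:countineq} but the stronger monotonicity statement of Claim~\ref{clm:firstmonotonicity}, which the paper subsequently exploits (Claim~\ref{clm:countjumps}, Corollary~\ref{cor:nuInterp}) to obtain the full-range results, Theorems~\ref{thm:polyCountRangeFull} and~\ref{thm:matrixCountRangeFull}.

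Your approach bypasses the deformation machinery entirely: the argument-principle computation produces the closed formula $\countproots{\cvec}=\#\{m\in\ZZ:|m|<\Psi(t_*)/(2\pi)\}$ directly, and the substitution $\phi_k=\arctan(t_*/c_k)$ collapses the remaining inequality $\Psi(t_*)\le n\arccos\cvecgeom$ to Jensen's inequality for the concave function $\log\cos$ on $(0,\pi/2)$. This is considerably shorter and more elementary for the bare inequality \eqref{eq:countineq}, and it delivers the equality case $\cvec=\cvecideal$ for free. What it does not immediately give is the monotonicity along a path that the paper relies on downstream; to recover Theorem~\ref{thm:polyCountRangeFull} from your formula one would observe separately that $\cvec\mapsto\Psi(t_*(\cvec))$ is continuous on the constraint surface and combine this with Section~\ref{sec:nuOne} via the intermediate-value theorem. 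Your own caveat about degenerate configurations (those with $\Psi(t_*)\in 2\pi\ZZ$, i.e.\ with $P(\pm it_*)=1$) is well placed but not an obstruction: for $\countproots{\cvec}$ the lower-semicontinuity argument you sketch suffices, and for $\countpzroots{\cvec}$ one can note directly that $\countpzroots{\cvec}=\countproots{\cvec}+\countzroots{\cvec}$ with $\countzroots{\cvec}\in\{0,2\}$ equal to $2$ precisely when $\Psi(t_*)\in 2\pi\ZZ$, which is exactly the discrepancy between the strict and non-strict counting formulas.
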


In addition, we may describe the range of roots of \eqref{eq:maineqn} in the open or closed right-half-plane.  To state the results succintly, note by \eqref{eq:specialsolcountform} that the number of solutions to \eqref{eq:maineqnspecial} is either $0$ or odd, by the evenness of the cosine function, and is nonzero if $\cvecgeom < 1 = \cos(0)$; therefore, if $\cvecgeom < 1$, we may write for some $\kappa_+$, $\overline{\kappa} \in \NN$ that

\begin{subequations} \label{eq:kappaformsIntro}
\begin{align}
\countproots{\cvecideal} &= 2 \kappa_+ + 1\\
\countpzroots{\cvecideal} &= 2 \overline{\kappa} + 1.
\end{align}
\end{subequations} 

\begin{thm} \label{thm:polyCountRangeFull}
Fix $n \in \NN$ and $\cvecgeom \in \RR^+$.  Then the range of $\countproots{\cvec}$ among the set of $\cvec \in \rplus^n$ with geometric mean $\cvecgeom$ is:
\begin{equation}
\begin{cases}
\bracepair{0}, & \text{ if } \cvecgeom \geq 1\\
2 \bracepair{0, 1, \dotsc, \kappa_+} + 1, & \text{ if } \cvecgeom < 1.
\end{cases}
\end{equation} 
Similarly, the range of $\countpzroots{\cvec}$ among these vectors is 
\begin{equation}
\begin{cases}
\bracepair{0}, & \text{ if } \cvecgeom > 1\\
\bracepair{1}, & \text{ if } \cvecgeom = 1\\
2 \bracepair{0, 1, \dotsc, \overline{\kappa}} + 1, & \text{ if } \cvecgeom < 1.
\end{cases}
\end{equation}  
\end{thm}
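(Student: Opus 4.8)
The plan is to derive the range statement from the upper bound of Theorem~\ref{thm:maincount} together with a parity constraint and a connectedness argument inside the fibre $G_{\cvecgeom} \declare \bracepair{\cvec \in \rplus^n : \prod_{k=1}^n c_k = \cvecgeom^n}$. Consider first $\cvecgeom \geq 1$. For $\Re z \geq 0$ and $c_k > 0$ one has $\abs{c_k + z}^2 = (c_k + \Re z)^2 + (\Im z)^2 \geq c_k^2$, with equality for every $k$ at once only when $z = 0$; hence $\abs{P(z)} \geq \prod_k c_k = \cvecgeom^n$ on $\overline{E^+}$. If $\cvecgeom > 1$ this is strictly bigger than $1$ throughout $\overline{E^+}$, so \eqref{eq:maineqn} has no solution there and $\countproots{\cvec} = \countpzroots{\cvec} = 0$. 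If $\cvecgeom = 1$, then $\abs{P(z)} = 1$ on $\overline{E^+}$ forces $\abs{c_k + z} = c_k$ for every $k$, i.e.\ $z = 0$, which is a simple solution; thus $\countproots{\cvec} = 0$ and $\countpzroots{\cvec} = 1$. This proves the first two lines of each displayed range.

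Now let $\cvecgeom < 1$. Since $P$ has real coefficients, the non-real solutions of \eqref{eq:maineqn} occur in conjugate pairs, and so do the purely imaginary ones — note $z = 0$ is not a solution because $P(0) = \cvecgeom^n \neq 1$ — so $\countzroots{\cvec}$ is even and $\countproots{\cvec}$ has the parity of the number of positive real solutions. On $[0,\infty)$ the function $x \mapsto P(x) = \prod_k (c_k + x)$ is strictly increasing (its logarithmic derivative $\sum_k (c_k + x)^{-1}$ is positive) from $P(0) = \cvecgeom^n < 1$ to $+\infty$, so \eqref{eq:maineqn} has exactly one positive real solution, and it is simple. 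Hence $\countproots{\cvec}$ is odd, and $\countpzroots{\cvec} = \countproots{\cvec} + \countzroots{\cvec}$ is odd as well; being nonnegative, both are $\geq 1$. Combined with $\countproots{\cvec} \leq \countproots{\cvecideal} = 2\kappa_+ + 1$ and $\countpzroots{\cvec} \leq \countpzroots{\cvecideal} = 2\overline{\kappa} + 1$ from Theorem~\ref{thm:maincount}, this shows the two ranges are contained in $2\bracepair{0,\dotsc,\kappa_+} + 1$ and $2\bracepair{0,\dotsc,\overline{\kappa}} + 1$ respectively.

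It remains to see that, for $\cvecgeom < 1$, each of these odd values actually occurs. The fibre $G_{\cvecgeom}$ is connected (in the coordinates $\log c_k$ it is an affine hyperplane). It contains $\cvecideal$, at which $\countproots{\cvecideal} = 2\kappa_+ + 1$, and it contains $\cvec^{(t)} \declare (\cvecgeom t, \cvecgeom/t, \cvecgeom, \dotsc, \cvecgeom)$, for which a short computation gives $P(z;\cvec^{(t)}) = (z+\cvecgeom)^n + u z (z+\cvecgeom)^{n-2}$ with $u = \cvecgeom(\sqrt{t} - 1/\sqrt{t})^2$; dividing $P - 1$ by $u$ and letting $t \to \infty$, the solutions of \eqref{eq:maineqn} split into one tending to $0^+$, a cluster of $n-2$ approaching $-\cvecgeom$, and one escaping to $-\infty$, so $\countproots{\cvec^{(t)}} = 1$ for $t$ large. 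Join $\cvecideal$ to such a $\cvec^{(t)}$ by a path $\cvec(\cdot)$ in $G_{\cvecgeom}$. The solutions of $P(z;\cvec(s)) - 1$ vary continuously with $s$, and since $0$ is never a solution, the axis $E^0$ is met only by conjugate pairs $\pm iy$ with $y \neq 0$; thus $\countproots{\cvec(s)}$ is locally constant off the parameter set where some solution lies on $E^0$, and jumps only by even amounts across it. After a generic perturbation of the path within $G_{\cvecgeom}$ — so that it misses the codimension $\geq 2$ locus of parameters with a repeated solution on $E^0$ or with two distinct conjugate pairs on $E^0$, and crosses transversally the codimension $1$ locus with a single simple conjugate pair on $E^0$ — the value $\countproots{\cvec(s)}$ changes by exactly $\pm 2$, finitely often, from $2\kappa_+ + 1$ to $1$; since consecutive values differ by $2$, it realizes every odd integer in $[1, 2\kappa_+ + 1]$, each on an open $s$-interval on which $\countzroots{\cvec(s)} = 0$ and hence simultaneously as a value of $\countpzroots{\cvec(s)}$. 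Finally, by \eqref{eq:specialsolcountform}, $\overline{\kappa} - \kappa_+ = \tfrac12 \countzroots{\cvecideal} = \tfrac12 \#\bracepair{k : \cos(2\pi k/n) = \cvecgeom} \in \bracepair{0,1}$; if this difference is $0$ the range of $\countpzroots{\cvec}$ is already accounted for, and if it is $1$ the remaining top value $2\overline{\kappa}+1 = 2\kappa_+ + 3$ is attained at $\cvec = \cvecideal$.

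The main obstacle is the transversality bookkeeping in the last step: one must check that the parameters $\cvec \in G_{\cvecgeom}$ for which \eqref{eq:maineqn} has a repeated solution on $E^0$, or two distinct conjugate pairs of solutions on $E^0$ simultaneously, form a subset of real codimension at least $2$, so that a generic path avoids it, and that along a generic path each admissible encounter with $E^0$ is transversal in the path parameter — this is what pins the jump of $\countproots{\cvec(s)}$ down to exactly $\pm 2$. The remaining ingredients are the elementary estimates above.
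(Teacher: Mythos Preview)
Your route is genuinely different from the paper's. The paper builds a very specific piecewise-analytic path $\cvec(t)$ from any $\cvec_0$ to $\cvecideal$ (Sections~\ref{sec:ctod}--\ref{sec:movement}) and proves, by an explicit computation of $\Re\dot w(t)$, that $\countproots{\cvec(t)}$ is \emph{monotone} along it (Claim~\ref{clm:firstmonotonicity}); the jump structure (Claim~\ref{clm:countjumps}) then gives exactly $+2$ jumps, and Section~\ref{sec:nuOne} constructs a $\cvec$ with $\countproots{\cvec}=1$ by a Rouch\'e-type count around the $-d_j$'s for one extreme coordinate sent to infinity. You instead take an arbitrary path in the fibre, argue the jumps are even, and appeal to a discrete intermediate-value principle. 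Your construction of a point with $\countproots{\cvec}=1$ via $\cvec^{(t)}=(\gamma t,\gamma/t,\gamma,\dots,\gamma)$ is correct and pleasantly explicit, and your treatment of the $\cvecgeom\ge 1$ cases and the parity/bound containments matches the paper.

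Your acknowledged gap --- the transversality bookkeeping --- is real as stated, but you are making it much harder than necessary. Two elementary facts already in the paper dissolve most of it: (i) $y\mapsto\abs{P(iy)}^2=\prod_k(y^2+c_k^2)$ is strictly increasing on $[0,\infty)$, so for every $\cvec$ there is \emph{at most one} conjugate pair $\pm iY$ on $E^0$ (Lemma~\ref{lem:oddcount}); and (ii) every root in $\overline{E^+}$ is simple, since $\Re P'(w)=\sum_k \Re(w+c_k)/\abs{w+c_k}^2>0$ there (Lemma~\ref{lem:simplezeros}). Thus your ``codimension $\ge 2$'' loci (double root on $E^0$, or two distinct pairs on $E^0$) are \emph{empty}, not merely thin. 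What remains is to ensure the crossing times along your path are isolated; for that it suffices to take a real-analytic path in the fibre (e.g.\ the straight segment in $\log$-coordinates from $\cvecideal$ to your $\cvec^{(t)}$), so that the locus $\{s:P(iY(\cvec(s));\cvec(s))=1\}$ is the zero set of a nonconstant real-analytic function of $s$. With (i) and (ii), each isolated crossing moves a single simple pair and $\countproots{\cvec(s)}$ changes by exactly $\pm 2$; the discrete intermediate-value argument then yields every odd value in $[1,2\kappa_++1]$, and your handling of $\countpzroots{\cvec}$ (including the $\overline\kappa=\kappa_++1$ case via $\cvecideal$) is fine. So your approach works, once you invoke (i)--(ii) in place of the generic-position argument; the paper's monotone path buys a constructive, perturbation-free proof at the cost of the detailed derivative analysis in Section~\ref{sec:method}.
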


\begin{thm} \label{thm:matrixCountRangeFull}
Fix $n \in \NN$ and $\alpha, \beta \in \RR^+$.  Then:
\begin{enumerate}[label = (\alph*)]
\item If $\alpha > \beta$, then no $X \in DC(\alpha, \beta)$ has an eigenvalue in the closed left half-plane.
\item If $\alpha = \beta$, then for every $X \in DC(\alpha, \beta)$, $0$ is the only eigenvalue in the closed left half-plane.  
\item If $\alpha < \beta$, then $X \in DC(\alpha, \beta)$ has an odd number of eigenvalues in the open left half-plane, but no more than that of $\alpha I - \beta \permmatspecial$.  Moreover, for every such odd number $k$, some $X \in DC(\alpha, \beta)$ has exactly $k$ eigenvalues in the open left half-plane.  Similarly for the closed left half-plane. 
\end{enumerate} 
\end{thm}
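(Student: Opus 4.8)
The plan is to deduce Theorem~\ref{thm:matrixCountRangeFull} from Theorem~\ref{thm:polyCountRangeFull} via the reduction already carried out in the introduction, keeping careful track of what ``range over $DC(\alpha,\beta)$'' becomes on the polynomial side. First I would set up the dictionary. Given $X \in DC(\alpha,\beta)$, conjugation by the diagonal matrix $Q$ of \eqref{eq:qdef} followed by extraction of the scalar factor $\beta > 0$ yields, by \eqref{eq:xtransform}, a matrix $\widetilde{X} = \widetilde{X}(\cvec)$ with parameter vector $\cvec = (a_1/\beta, \dots, a_n/\beta) \in \rplus^n$ whose geometric mean is $\cvecgeom = \alpha/\beta$ by \eqref{eq:cvecgeommean}. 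Since $\beta > 0$ rescales the spectrum without altering the sign of the real part of any eigenvalue, and since the substitution $z = -\lambda$ together with \eqref{eq:evalform} converts the characteristic equation of $\widetilde{X}$ into $P(z) = 1$, the number of eigenvalues of $X$ in the open (respectively closed) left half-plane, counted with multiplicity, equals $\countproots{\cvec}$ (respectively $\countpzroots{\cvec}$). I would also observe that the assignment $X \mapsto \cvec$ is onto the set of all $\cvec \in \rplus^n$ with geometric mean $\cvecgeom$: given such a $\cvec$, the choice $b_k = \beta$, $a_k = \beta c_k$ for $1 \le k \le n$ produces a matrix in $DC(\alpha,\beta)$ returning exactly this $\cvec$. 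Hence, for each of the two half-planes, the set of possible eigenvalue counts over $X \in DC(\alpha,\beta)$ coincides with the range of $\countproots{\cvec}$ (respectively $\countpzroots{\cvec}$) over $\cvec \in \rplus^n$ of geometric mean $\cvecgeom$. Finally, $X_* = \alpha I - \beta \permmatspecial$ corresponds to $\cvecideal = (\cvecgeom, \dots, \cvecgeom)$, so the eigenvalue counts of $X_*$ in the open and closed left half-planes are $\countproots{\cvecideal}$ and $\countpzroots{\cvecideal}$.

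With this dictionary the three cases of the theorem correspond to $\cvecgeom > 1$, $\cvecgeom = 1$, $\cvecgeom < 1$, i.e.\ to the three cases of Theorem~\ref{thm:polyCountRangeFull}, and I would simply translate. For (a): if $\alpha > \beta$ then $\cvecgeom > 1$, and Theorem~\ref{thm:polyCountRangeFull} gives $\countpzroots{\cvec} = 0$, so $X$ has no eigenvalue in the closed left half-plane. For (c): if $\alpha < \beta$ then $\cvecgeom < 1$, and Theorem~\ref{thm:polyCountRangeFull} says $\countproots{\cvec}$ ranges exactly over $2\{0, 1, \dots, \kappa_+\} + 1$ and $\countpzroots{\cvec}$ over $2\{0, 1, \dots, \overline{\kappa}\} + 1$; since $\countproots{\cvecideal} = 2\kappa_+ + 1$ and $\countpzroots{\cvecideal} = 2\overline{\kappa} + 1$ by \eqref{eq:kappaformsIntro}, this is precisely the assertion that every $X \in DC(\alpha,\beta)$ has an odd number of eigenvalues in the open (resp.\ closed) left half-plane, at most the number for $X_*$, and that each such odd value is realized by some $X$.

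Case (b) needs one extra line. If $\alpha = \beta$ then $\cvecgeom = 1$, and Theorem~\ref{thm:polyCountRangeFull} gives $\countproots{\cvec} = 0$ and $\countpzroots{\cvec} = 1$, so $X$ has no eigenvalue of negative real part and exactly one eigenvalue, counted with multiplicity, of zero real part. To identify that eigenvalue as $0$ I would invoke \eqref{eq:detdczp}: $\det X = \alpha^n - \beta^n = 0$, so $0 \in \spec(X)$; as the closed left half-plane contains only one eigenvalue counted with multiplicity, this eigenvalue must be $0$, and it is simple. This finishes (b).

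I do not anticipate a real obstacle: the statement is a repackaging of Theorem~\ref{thm:polyCountRangeFull} in matrix language. The only points requiring care are the verification that $X \mapsto \cvec$ is surjective onto the parameter vectors of the prescribed geometric mean, so that the range statements transfer with no loss, and the short determinant argument pinning down the imaginary-axis eigenvalue as $0$ in the borderline case $\alpha = \beta$.
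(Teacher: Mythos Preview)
Your proposal is correct and follows essentially the same route as the paper: the theorem is deduced from Theorem~\ref{thm:polyCountRangeFull} through the dictionary of the introduction, with the $\cvecgeom \geq 1$ cases already handled at the start of Section~\ref{sec:tech}. Your extra determinant line in case~(b) is a harmless variant of the paper's direct observation (from \eqref{eq:toobig}--\eqref{eq:toobigparttwo}) that $z=0$ is the unique solution of $P(z;\cvec)=1$ in $E$ when $\cvecgeom=1$.
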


The core of the paper (Sections~\ref{sec:tech} to \ref{sec:movement}) is devoted to proving Theorem~\ref{thm:maincount}.  First, we observe that the roots of $P(z; \cvec) = 1$ in the right-half-plane are \emph{simple},and are bounded away from $\infty$ and $0$ with bounds only depending on $\max_j c_j$, $\min_j c_j$, and $\cvecgeom$; these statements are recorded in Section~\ref{sec:tech}.  Moreover, their number is odd.  This allows us to show that the zeroes in the region of interest vary smoothly as $\cvec$ varies, indeed to use the Implicit Function Theorem (our variation is described in Appendix~\ref{sec:IFT}).  We therefore wish to find a path $\cvec(t)$ starting from any $\cvec_0$ to $\cvecideal$, along which $\countproots{\cvec(t)}$ is increasing.  We are still wondering if a ``direct'' path would work, but we choose to build it step-by-step, steadily bringing the most extreme elements to meet with the next most extreme.  Our rephrasing in terms of the number of \emph{distinct} elements of $\cvec$, and creating a path from any given $\cvec_0$ to one with less extreme $\max_j c_j$ and $\min_j c_j$ (and fewer distinct elements) is related in Section~\ref{sec:ctod}.  In Section~\ref{sec:method}, we study the effects on the roots with positive real part, showing that they remain in the right half-plane.  In the beginning of Section~\ref{sec:movement}, we put the partial paths together to build the desired path from $\cvec_0$ to $\cvecideal$ along which the number of roots of \eqref{eq:maineqn} with positive (or 0) real part is increasing.  Appendix~\ref{sec:poscond} clarifies a positivity condition used in this work.  .

The end of Section~\ref{sec:movement}, and Sections~\ref{sec:nuOne}, \ref{sec:nuAll}, present more details about the precise behavior of the zero-counting functions, and complete the proofs of Theorems~\ref{thm:polyCountRangeFull} and \ref{thm:matrixCountRangeFull}.

The remaining sections tighten the bounds on the range of permissible zeroes in the right-half plane, giving dimension-invariant bounds.  Section~\ref{sec:furtherOne} gives the details, and Appendix~\ref{sec:furthertwo} clarifies a bound used in this work.

\section{Technical Preliminaries} \label{sec:tech}
For $z = x + iy$, \eqref{eq:maineqn} implies
\begin{subequations} 
\begin{gather}
\abs{\prod_{k = 1}^n (z + c_k)} = 1, \label{eq:maineqnabs}\\
\abs{ \prod_{k = 1}^n (z + c_k)}^2 = \prod_{k = 1}^n \left( (x + c_k)^2 + y^2 \right) = 1. \label{eq:maineqnabstwo}
\end{gather}
\end{subequations}
If $z \in E \in \eqref{eq:Eext}$, $z \neq 0$, it follows from \eqref{eq:maineqnabstwo} that 
\begin{equation} \label{eq:toobig}
1  > \prod_{k = 1}^n c_k^2 = \cvecgeom^{2n} ;
\end{equation}
if $z = 0$,
\begin{equation} \label{eq:toobigparttwo}
1 = \prod_{k = 1}^n c_k^2 =  \cvecgeom^{2n}. 
\end{equation}
Therefore, if $\cvecgeom > 1$, then $\countpzroots{\cvec} = 0$, and with $\cvecgeom = 1$, the point $z^* = 0$ is the only solution for \eqref{eq:maineqn} and \eqref{eq:maineqnspecial} in $E$.  In both cases,
\begin{equation}
\countproots{\cvec} \leq \countproots{\cvecideal}, \quad \countpzroots{\cvec} \leq \countpzroots{\cvecideal},
\end{equation} 
i.e., \eqref{eq:countineq} holds.  

In the sequel, we therefore analyze only the case
\begin{equation} \label{eq:gamrestrict}
0 < \cvecgeom < 1.  
\end{equation}
In this case, $\countproots{\cvec} \geq 1$, since all coefficients are real, and
\[
P(0, \cvec) =  \cvecgeom^n < 1 < \prod_{k = 1}^n (1 + c_k) = P(1, \cvec).
\]
We further note that the function is strictly increasing on $[0, \infty)$, so this root is simple, and unique on $[0, \infty)$.  

If $P(w; \cvec) = 1$, then by conjugation,
\[
P(\overline{w}; \cvec) = 1,
\]
and so $\overline{w}$ is also a root of \eqref{eq:maineqn} (of the same multiplicity).  Similarly, if we consider
\[
h(y) = \abs{P(iy, \cvec)}^2 = \prod_{k = 1}^n (y^2 + c_k^2),
\]
we have that
\[
h(0) = \cvecgeom^{2n} < 1 < \prod_{k = 1}^n (1 + c_k^2) = h(1),
\]
and that $h$ is even, and increasing on $[0, \infty)$, so there exists a unique solution on the positive imaginary axis to \eqref{eq:maineqnabstwo} (i.e., $x = 0$); call it $z = i Y(\cvec)$.  Therefore, we have the following.
\begin{lem} \label{lem:oddcount}
Fix $\cvec \in (\rplus)^n$ with geometric mean $\cvecgeom < 1$.  Then $\countproots{\cvec}$ and $\countpzroots{\cvec}$ are both odd and positive.  $P(z; \cvec)$ has exactly one root in $(0, 1)$, and the others are not real.  

Also, $\abs{P(z; \cvec)}^2 = 1$, or $\abs{P(z; \cvec)} = 1$, has a unique solution on the positive imaginary axis.
\end{lem}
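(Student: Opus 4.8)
The plan is to combine the two monotonicity facts already recorded above with a short parity count based on the degree of $P(z;\cvec)-1$. The behavior on the positive real axis is essentially in hand: each factor $c_k+x$ is positive and strictly increasing for $x\ge 0$, so $x\mapsto P(x;\cvec)$ is strictly increasing on $[0,\infty)$, with $P(0;\cvec)=\cvecgeom^n<1$ and $P(1;\cvec)=\prod_{k=1}^n(1+c_k)>1$; hence \eqref{eq:maineqn} has exactly one root in $[0,\infty)$, it lies in $(0,1)$, it is simple, and since $P(x;\cvec)\ge P(1;\cvec)>1$ for all $x\ge 1$ it is the only real root of \eqref{eq:maineqn} in the closed right half-plane $E$. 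In the same spirit, $h(y)=\abs{P(iy;\cvec)}^2=\prod_{k=1}^n(y^2+c_k^2)$ is even and strictly increasing on $[0,\infty)$, running from $\cvecgeom^{2n}<1$ to $\prod_{k=1}^n(1+c_k^2)>1$, so $h(y)=1$ has exactly the two solutions $y=\pm Y(\cvec)$ with $Y(\cvec)>0$. This already gives the last assertion of the lemma, and it also shows that any purely imaginary root $iy$ of \eqref{eq:maineqn} must have $y\in\{Y(\cvec),-Y(\cvec)\}$, while $z=0$ is not a root (as $P(0;\cvec)=\cvecgeom^n<1$).

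Next I would use the conjugation symmetry $P(\overline{w};\cvec)=\overline{P(w;\cvec)}$: the non-real roots of \eqref{eq:maineqn} occur in conjugate pairs $w,\overline{w}$ of equal multiplicity, and since conjugation preserves $\Re$, this pairing happens separately within $E^-$, $E^0$, and $E^+$. In particular the purely imaginary roots, which are all non-real and are matched $iY(\cvec)\leftrightarrow -iY(\cvec)$, contribute an even number, so $\countzroots{\cvec}$ is even; likewise the non-real roots lying in $E^+$ contribute an even number to $\countproots{\cvec}$.

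The parity count is then immediate. The polynomial $P(z;\cvec)-1$ is monic of degree $n$, so its roots partition with multiplicity as $\countnroots{\cvec}+\countzroots{\cvec}+\countproots{\cvec}=n$; among the roots in $E^+$, exactly one is real, namely the simple root in $(0,1)$, and the remaining ones split into conjugate pairs, so $\countproots{\cvec}=1+(\text{even})$ is odd and positive. Since $\countpzroots{\cvec}=\countproots{\cvec}+\countzroots{\cvec}$ with $\countzroots{\cvec}$ even, $\countpzroots{\cvec}$ is likewise odd and positive. Finally, the unique root in $(0,1)$ is the only real root of \eqref{eq:maineqn} in $E$, because the only other roots in $E$ are the purely imaginary $\pm iY(\cvec)$, if present, together with non-real roots of $E^+$, none of which is real.

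I do not expect a genuine obstacle here; the one place to be careful is the bookkeeping in the parity step, namely checking that the only conjugation-fixed root of \eqref{eq:maineqn} in $\overline{E^+}$ is the single simple root in $(0,1)$ (negative real roots are permitted, but they lie in $E^-$ and do not enter the counts $\countproots{\cvec}$ or $\countpzroots{\cvec}$). If one also wanted the sharper normalization $\countzroots{\cvec}\in\{0,2\}$, it suffices to note that $\Re\bigl(P'(iY(\cvec);\cvec)/P(iY(\cvec);\cvec)\bigr)=\sum_{k=1}^n c_k/(c_k^2+Y(\cvec)^2)>0$, so $iY(\cvec)$ is a simple root whenever it is a root; this refinement is not needed for the statement as given.
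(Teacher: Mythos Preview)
Your proof is correct and follows essentially the same approach as the paper: the intermediate value theorem together with strict monotonicity of $x\mapsto P(x;\cvec)$ on $[0,\infty)$ gives the unique simple real root in $(0,1)$; conjugation symmetry $P(\overline{w};\cvec)=\overline{P(w;\cvec)}$ pairs the non-real roots and forces $\countproots{\cvec}$ and $\countpzroots{\cvec}$ to be odd; and the even, strictly increasing function $h(y)=\prod_k(y^2+c_k^2)$ handles the positive imaginary axis. Your write-up is slightly more explicit than the paper's in separating the parity bookkeeping for $E^0$ from that for $E^+$, but the arguments are the same.
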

For $\cvecgeom, \cvecmin, \cvecmax$, satisfying 
\begin{equation} \label{eq:gamminmaxineqs}
0 < \cvecmin \leq \cvecgeom \leq \cvecmax,
\end{equation}
we define
\begin{equation} \label{eq:cspacedef}
\cspace{\cvecgeom}{\cvecmin}{\cvecmax} = \bracepair*{\cvec = \cveclong \in [\cvecmin, \cvecmax]^n : \prod_{k = 1}^n c_k = \cvecgeom^n}.
\end{equation}  
\begin{smallrem} \label{rem:wlogrem}
In the analysis of polynomials $P(z, \cvec)$ and related algebraic equations, without loss of generality, we may suppose that the $c_k$ are in order, i.e.
\begin{equation} \label{eq:cbounds}
0 < \cvecmin \leq c_1 \leq c_2 \leq \dotsc \leq c_m \leq \cvecmax < \infty.
\end{equation}  
It will be useful in the technical analysis which follows.  But it helps to understand that in Theorem~\ref{thm:eigencounts}, we can talk about \emph{any} $\permmat$, not just $\permmatspecial$, which corresponds to an $n$-cycle permutation $\kappa$.

Indeed, for $A = \diag(a_1, \dotsc, a_n)$,
\begin{equation}
\det \left[(A - \lambda I) - \beta \permmat \right] = \prod_{k = 1}^n (a_k - \lambda) + (-1)^{\sgn \kappa} (-\beta)^n,
\end{equation} 
and
\begin{equation}
\sgn \kappa = n - 1;
\end{equation}
see, e.g., \cite[Section 3.5, p. 110]{DuFo}, either Proposition 25 or the line +13.

\end{smallrem}

For $\cvec \in \cspace{\cvecgeom}{\cvecmin}{\cvecmax}$, we may uniformly establish a root-free zone for $P(z; \cvec)$ in a small disk centered at the origin.

\begin{lem} \label{lem:zerofreedisk}
Fix $0 < \cvecgeom < 1$ and two positive real numbers $\cvecmin$ and $\cvecmax$, satisfying \eqref{eq:gamminmaxineqs}.  Then for all $\cvec \in \cspace{\cvecgeom}{\cvecmin}{\cvecmax}$, there are no roots to \eqref{eq:maineqn} in the closed disk $\bracepair{\xi \in \CC: \abs{\xi} \leq d}$, where
\begin{equation} \label{eq:ddef}
d \declare d(\cvecgeom, \cvecmax) =  (1 - \cvecgeom^n) \left( 1 + \cvecmax \right)^{-n}.
\end{equation}
\end{lem}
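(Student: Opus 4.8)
The plan is to prove the slightly stronger statement that $\abs{P(z;\cvec)-1}>0$ everywhere on the closed disk $\abs{z}\le d$, by measuring how far $P$ can travel from its value at the origin. Indeed $P(0;\cvec)=\prod_{k=1}^n c_k=\cvecgeom^n$, and by \eqref{eq:gamrestrict} this lies strictly inside the unit circle, $0<\cvecgeom^n<1$, so it is enough to bound $\abs{P(z;\cvec)-\cvecgeom^n}$ by $1-\cvecgeom^n$ on that disk.

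The computation rests on the elementary-symmetric expansion
\[
\prod_{k=1}^n (c_k+z)=\sum_{j=0}^n \sympoly{n-j}{\cvec}\,z^{j},
\]
in which every coefficient $\sympoly{m}{\cvec}$ is strictly positive (all $c_k>0$), with $\sympoly{0}{\cvec}=1$ and $\sympoly{n}{\cvec}=\prod_{k=1}^n c_k=\cvecgeom^n$. Subtracting the constant term gives $P(z;\cvec)-\cvecgeom^n=\sum_{j=1}^n \sympoly{n-j}{\cvec}\,z^{j}$. From \eqref{eq:ddef} one checks at once that $0<d<1$ (since $0<\cvecgeom<1$ forces $0<1-\cvecgeom^n<1$, while $\cvecmax\ge\cvecgeom>0$ forces $(1+\cvecmax)^{-n}<1$), so for $\abs{z}=t\le d$ we have $t^{j}\le t$ for every $j\ge1$; combining this with positivity of the $\sympoly{m}{\cvec}$, the identity $\sum_{j=1}^n\sympoly{n-j}{\cvec}=\prod_{k=1}^n(1+c_k)-\cvecgeom^n$, and the bound $c_k\le\cvecmax$ yields
\[
\abs{P(z;\cvec)-\cvecgeom^n}\le\sum_{j=1}^n\sympoly{n-j}{\cvec}\,t^{j}\le t\sum_{j=1}^n\sympoly{n-j}{\cvec}\le t\prod_{k=1}^n(1+c_k)\le t\,(1+\cvecmax)^n\le d\,(1+\cvecmax)^n=1-\cvecgeom^n.
\]
By the triangle inequality this already gives $\abs{P(z;\cvec)-1}\ge(1-\cvecgeom^n)-\abs{P(z;\cvec)-\cvecgeom^n}\ge0$.

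The only real obstacle is promoting that final ``$\ge0$'' to a strict inequality. For $z=0$ this is trivial, since $P(0;\cvec)=\cvecgeom^n\ne1$. For $0<\abs z\le d$ with $n\ge2$ it is built into the estimate: since $0<t^{n}<t$ whenever $0<t<1$, while the coefficient $\sympoly{0}{\cvec}=1$ of $t^{n}$ is positive, the second inequality in the display above is strict, so $\abs{P(z;\cvec)-\cvecgeom^n}<1-\cvecgeom^n$ and hence $\abs{P(z;\cvec)-1}>0$. For $n=1$ one has $c_1=\cvecgeom$ and $P(z;\cvec)=\cvecgeom+z$, so $\abs{P(z;\cvec)-1}\ge(1-\cvecgeom)-\abs z\ge(1-\cvecgeom)-d=(1-\cvecgeom)\cvecmax\,(1+\cvecmax)^{-1}>0$. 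In every case $P(z;\cvec)\ne1$ throughout $\abs z\le d$, which is precisely the assertion of the lemma. (Should one prefer to avoid the case split on $n$, the $n=1$ situation can instead be folded into the generic estimate by noting that equality in $\prod_k(1+c_k)\le(1+\cvecmax)^n$ forces all $c_k=\cvecmax=\cvecgeom$, i.e. $\cvec=\cvecideal$, where $P(z;\cvec)=(\cvecgeom+z)^n$ has its roots $-\cvecgeom+\omega^{k}$ at distance $\ge1-\cvecgeom>d$ from the origin.)
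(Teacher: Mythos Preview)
Your proof is correct and follows essentially the same route as the paper: both expand $\prod_k(c_k+z)$ in elementary symmetric polynomials, factor out one power of $\abs{z}$, and bound the remaining coefficient sum by $(1+\cvecmax)^n$ to force $\abs{z}>d$. The only cosmetic difference is that the paper starts from $1=\abs{P(z;\cvec)}\le\prod_k(\abs{z}+c_k)$ and argues by contradiction, whereas you bound $\abs{P(z;\cvec)-\cvecgeom^n}$ directly; your treatment of the strict inequality is in fact more careful than the paper's somewhat terse handling of that step.
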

\begin{proof}
We denote  by $\sympoly{j}{\cvec} = \sympoly{j}{(c_1, \dotsc, c_n)}$ the $j$th elementary symmetric polynomial evaluated at $\cveclong$.  If $z$ is a root of \eqref{eq:maineqn}, then $z \neq 0$ because $\cvecgeom < 1$, as $\prod_{k = 1}^n c_k = \cvecgeom^n < 1$.  Then by \eqref{eq:maineqnabs}, and Lemma~\ref{lem:basicUB}
\begin{equation}
\begin{split}
1 = \abs{\prod_{k = 1}^n (z + c_k)} & \leq  \prod_{k = 1}^n (\abs{z} + c_k)\\
& = \sum_{k = 0}^n \sympoly{n - k}{\cvec} \abs{z}^k\\
& = \sympoly{n}{\cvec} + \sum_{k = 1}^n \sympoly{n-k}{\cvec} \abs{z}^k\\
& = \cvecgeom^n + \abs{z} \sum_{k = 1}^n \sympoly{n-k}{\cvec} \abs{z}^{k - 1}\\
& < \cvecgeom^n + \abs{z} \left[(\cvecmax)^n + \sum_{k = 1}^n \sympoly{n-k}{(\cvecmax, \cvecmax, \dotsc , \cvecmax)} 1 \right]\\
& = \cvecgeom^n + \abs{z} (1 + \cvecmax)^n,
\end{split}
\end{equation}
so
\begin{equation} \label{eq:rewrite}
\frac{1 - \cvecgeom^n}{(1 + \cvecmax)^n} < \abs{z}. 
\end{equation} 
\end{proof}

For $\cvecmin > 0$, define the closed half-plane
\begin{equation} \label{eq:eextdef}
\extplane = \extplane(\cvecmin) \declare \bracepair*{\xi \in \CC: \Re \xi \geq - \, \frac{\cvecmin}{3}}
\end{equation}

For $\cvec \in \cspace{\cvecgeom}{\cvecmin}{\cvecmax}$, we can also bound from above the size of the roots of \eqref{eq:maineqn} in $\extplane$.

\begin{lem} \label{lem:basicUB}
Fix $n \in \NN$ and two positive real numbers $\cvecmin$ and $\cvecmax$, satisfying \eqref{eq:gamminmaxineqs}.  Then for all $\cvec \in \cspace{\cvecgeom}{\cvecmin}{\cvecmax}$, all roots of \eqref{eq:maineqn} in $\extplane(\cvecmin)$ are in the disk $\bracepair{\xi \in \CC: \abs{\xi} < 1}$.  
\end{lem}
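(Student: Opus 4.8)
The plan is to read the bound off directly from the modulus identity \eqref{eq:maineqnabstwo}. If $z = x + iy$ is any root of \eqref{eq:maineqn}, then
\[
\prod_{k=1}^n\bigl((x+c_k)^2 + y^2\bigr) = 1 ,
\]
and I claim that when $x = \Re z \geq -\cvecmin/3$ every factor strictly exceeds $|z|^2 = x^2 + y^2$; granting this, the product strictly exceeds $|z|^{2n}$, so $|z|^{2n} < 1$ and hence $|z| < 1$, which is the assertion. Note that only the membership $c_k \in [\cvecmin, \cvecmax]$ is used, so the geometric-mean constraint defining $\cspace{\cvecgeom}{\cvecmin}{\cvecmax}$ is irrelevant to this particular lemma; combined with Lemma~\ref{lem:zerofreedisk} this confines the roots of interest to a fixed annulus.

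The claim reduces to an elementary positivity. Since
\[
(x+c_k)^2 + y^2 - (x^2+y^2) = 2 c_k x + c_k^2 = c_k\,(c_k + 2x)
\]
and $c_k > 0$, it suffices to check $c_k + 2x > 0$ on the half-plane $\Re z \geq -\cvecmin/3$. If $x \geq 0$ this is clear from $c_k \geq \cvecmin > 0$; if $-\cvecmin/3 \leq x < 0$, then $2x \geq -\tfrac{2}{3}\cvecmin$ together with $c_k \geq \cvecmin$ gives $c_k + 2x \geq \cvecmin - \tfrac{2}{3}\cvecmin = \tfrac{1}{3}\cvecmin > 0$. In either case $c_k(c_k+2x) > 0$ strictly, hence $(x+c_k)^2 + y^2 > |z|^2$ for every $k$, which is exactly what was needed.

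There is no serious obstacle; the entire content is the inequality $c_k + 2x > 0$, and this is precisely what the factor $\tfrac{1}{3}$ in the definition \eqref{eq:eextdef} of $\extplane$ is engineered to supply (any constant strictly below $\tfrac{1}{2}$ would do equally well). The one point deserving a careful word is strictness: because $c_k(c_k+2x) > 0$ holds strictly for every $k$, the resulting inequality $|z|^{2n} < 1$ is strict, so the conclusion is genuinely the open disk $\bracepair{\xi \in \CC : \abs{\xi} < 1}$ — no root can lie on the unit circle while satisfying $\Re\xi \geq -\cvecmin/3$.
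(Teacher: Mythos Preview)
Your proof is correct and follows essentially the same approach as the paper: both use the identity $(x+c_k)^2+y^2 = |z|^2 + c_k(2x+c_k)$ and observe that $2x+c_k \geq -\tfrac{2}{3}\cvecmin + \cvecmin = \tfrac{1}{3}\cvecmin > 0$ on $\extplane(\cvecmin)$, so each factor strictly exceeds $|z|^2$ and hence $|z|^{2n} < 1$. The paper likewise remarks that $x > -\cvecmin/2$ would already suffice, matching your parenthetical about any constant below $\tfrac{1}{2}$.
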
 
\begin{proof}
If $z = x + iy$ is a root of \eqref{eq:maineqn} with $x \geq 0$, $y$ real, then by \eqref{eq:maineqnabstwo},
\begin{equation} \label{eq:absabovebasic}
\begin{split}
1 = \abs{\prod_{k = 1}^n (z + c_k)}^2 & = \prod_{k = 1}^n \left( (x + c_k)^2 + y^2 \right)\\
& = \prod_{k = 1}^n \left((x^2 + y^2) + c_k (2 x + c_k)\right)\\
& > \prod_{k = 1}^n (\abs{z}^2 + c_k \left[ - \, \frac{ 2\cvecmin}{3}  + \cvecmin \right]) > \abs{z}^{2n},
\end{split}
\end{equation}
so $\abs{z} < 1$.  (Indeed, $x > - \, \frac{\cvecmin}{2}$ is all that is required here).  
\end{proof}
(In Section~\ref{sec:furtherOne} and Appendix~\ref{sec:furthertwo} we give better estimates, but Lemma~\ref{lem:basicUB} is good enough for the proof of our main theorem.)  

We define $\ann{r}{R} = \bracepair{z \in \CC: r < \abs{z} < R}$, the annulus centered at the origin with  radii $r$ and $R$.  We summarize Lemmas~\ref{lem:basicUB} and \ref{lem:zerofreedisk} as follows.
\begin{cor} \label{cor:zerocontainmentannulus}
Fix $0 < \cvecgeom <1$, and $\cvecmin, \cvecmax$ positive reals with $\cvecmin \leq \cvecgeom \leq \cvecmax$.  Then for any $\cvec$ in $\cspace{\cvecgeom}{\cvecmin}{\cvecmax}$, all zeroes of \eqref{eq:maineqn} in $\extplane(\cvecmin)$ are also in $\ann{d}{1}$, $d \in \eqref{eq:ddef}$.  
\end{cor}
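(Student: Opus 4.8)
The plan is simply to read off the conclusion by intersecting the two preceding lemmas, so there is essentially no new work to do; the only point requiring care is to verify that the hypotheses of each lemma are implied by the hypotheses of the corollary.

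Concretely, I would fix $\cvec \in \cspace{\cvecgeom}{\cvecmin}{\cvecmax}$ and let $z$ be an arbitrary solution of \eqref{eq:maineqn} lying in $\extplane(\cvecmin)$. Since $0 < \cvecgeom < 1$ and $\cvecmin,\cvecmax$ satisfy \eqref{eq:gamminmaxineqs}, Lemma~\ref{lem:zerofreedisk} applies to $\cvec$ and shows that \eqref{eq:maineqn} has no roots in the closed disk of radius $d = d(\cvecgeom,\cvecmax)$ of \eqref{eq:ddef}; in particular $\abs{z} > d$. Note that Lemma~\ref{lem:zerofreedisk} carries no half-plane restriction, so this lower bound holds for \emph{every} root, a fortiori for the one we are considering. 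Next, $z \in \extplane(\cvecmin)$ means exactly $\Re z \geq -\cvecmin/3$, which is the hypothesis of Lemma~\ref{lem:basicUB}; that lemma then gives $\abs{z} < 1$. Combining, $d < \abs{z} < 1$, i.e. $z \in \ann{d}{1}$. As $z$ was an arbitrary solution of \eqref{eq:maineqn} in $\extplane(\cvecmin)$, the corollary follows.

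There is no genuine obstacle here — the statement is a bookkeeping summary of Lemmas~\ref{lem:basicUB} and \ref{lem:zerofreedisk}. The only thing worth flagging is that the two bounds come from different principles (the lower bound $\abs{z} > d$ from the triangle inequality applied to $\abs{P(z;\cvec)} \leq \prod_k(\abs{z}+c_k)$ together with $\prod_k c_k = \cvecgeom^n$, and the upper bound $\abs{z} < 1$ from expanding $\abs{P(z;\cvec)}^2 = \prod_k((x+c_k)^2+y^2)$ and using $2x + c_k > \cvecmin/3 > 0$), so one should simply make sure the quantities $d$ and $1$ are the ones appearing in those proofs, which they are.
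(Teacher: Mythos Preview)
Your proposal is correct and matches the paper's approach exactly: the paper does not give a separate proof but simply introduces the corollary as a summary of Lemmas~\ref{lem:basicUB} and \ref{lem:zerofreedisk}, which is precisely what you do.
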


In the sequel, it is sometimes more convenient to use a bounding box, rather than a bounding semiannulus, for the permissible range of the zeros with positive real part.  
\begin{cor} \label{cor:zerofreebox}
Fix $0 < \cvecgeom < 1$ and two positive real numbers $\cvecmin$ and $\cvecmax$, satisfying \eqref{eq:gamminmaxineqs}.  Then for all $\cvec \in \cspace{\cvecgeom}{\cvecmin}{\cvecmax}$, if $z \in \extplane(\cvecmin)$ satisfied $P(z, \cvec) = 1$, then $w$ is inside the box
\begin{equation} \label{eq:ourboxdef}
\ourbox =  \bracepair*{\xi \in \CC: - \frac{\cvecmin}{3} < \Re \xi < 1, \abs{\Im \xi} < 1} \setminus \bracepair*{\xi \in \CC: \abs{\Re \xi} \leq \delta, \abs{\Im \xi} \leq \delta},
\end{equation}
where  
\begin{equation} \label{eq:deltadef}
\delta = \delta(\cvecgeom, \cvecmax) \declare \frac{2}{3} d = \frac{2}{3}   (1 - \cvecgeom^n) \left( 1 + \cvecmax \right)^{-n}  \leq \frac{2}{3}.
\end{equation}
\end{cor}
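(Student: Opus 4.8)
The plan is to assemble the pieces already in hand. Fix $\cvec \in \cspace{\cvecgeom}{\cvecmin}{\cvecmax}$ and let $z = x + iy$ be a root of $P(z,\cvec) = 1$ lying in $\extplane(\cvecmin)$ (this is the $w$ appearing in the statement).

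First, the definition \eqref{eq:eextdef} of $\extplane(\cvecmin)$ gives $x \geq -\cvecmin/3$, and Lemma~\ref{lem:basicUB} gives $|z| < 1$, hence $x < 1$ and $|y| < 1$. Thus $z$ lies in the half-open rectangle $\{-\cvecmin/3 \leq \Re\xi < 1,\ |\Im\xi| < 1\}$. For the intended application only the roots in $E$ matter, and these satisfy $x \geq 0 > -\cvecmin/3$, so replacing the left edge by the strict inequality appearing in the description of $\ourbox$ costs nothing.

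Second, I would invoke the strict lower bound \eqref{eq:rewrite} obtained in the proof of Lemma~\ref{lem:zerofreedisk}, namely $|z| > d$ with $d$ as in \eqref{eq:ddef}. The one genuinely new (and elementary) observation is that the closed square $\{|\Re\xi| \leq \delta,\ |\Im\xi| \leq \delta\}$ with $\delta = \tfrac23 d$ is contained in the closed disk $\{|\xi| \leq d\}$: its corner farthest from the origin has modulus $\delta\sqrt2 = \tfrac{2\sqrt2}{3}\,d < d$, since $2\sqrt2 < 3$. As $|z| > d$, the root $z$ therefore avoids this square, and combining with the previous paragraph yields $z \in \ourbox$.

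Finally, the side remark $\delta \leq \tfrac23$ holds because $d = (1 - \cvecgeom^n)(1 + \cvecmax)^{-n} < 1$: the first factor is below $1$ as $\cvecgeom > 0$, and the second is at most $1$ as $\cvecmax > 0$. I do not expect any real obstacle here: the corollary merely repackages Corollary~\ref{cor:zerocontainmentannulus}, trading the zero-free disk for a zero-free square, and the only points requiring care are the square-inside-disk inclusion (via $2\sqrt2 < 3$) and keeping track of which inequalities are strict.
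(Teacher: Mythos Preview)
Your argument is correct and essentially identical to the paper's: both invoke Lemma~\ref{lem:basicUB} for $|z|<1$ (hence $x<1$, $|y|<1$) and Lemma~\ref{lem:zerofreedisk} for $|z|>d$, and both check that the $\delta$-square sits inside the $d$-disk---you via $\delta\sqrt{2}=\tfrac{2\sqrt{2}}{3}d<d$, the paper via the equivalent $2\delta^2=\tfrac{8}{9}d^2<d^2$. Your explicit remark about the strict versus non-strict inequality at $\Re z=-\cvecmin/3$ is a point the paper simply glosses over.
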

\begin{proof}
If $z = x + iy$ with $x > - \frac{\cvecmin}{3}$, $y$ real, then by Lemma~\ref{lem:basicUB}, $\abs{z} < 1$, so $x < 1$ and $\abs{y} < 1$.  If $\abs{x} \leq \delta$ and $\abs{y} \leq \delta$, $\abs{z}^2 = x^2 + y^2 \leq 2 \delta^2 = 2 \cdot \left( \frac{2}{3} d\right)^2 = \frac{8}{9} d^2 < d^2$, so $\abs{z} < d$ and we may apply 
Lemma~\ref{lem:zerofreedisk}.
\end{proof}
\begin{figure}
\begin{tikzpicture}[scale = 3.0]
\draw [<->](-1.1, 0) -- (1.1, 0) node[right] {$x = \Re z$};
\draw [<->] (0, -1.1) -- (0, 1.2) node[above] {$y = \Im z $}; 
\draw [very thick] (-1/16, -0.9961) arc (-93.5833:93.5833:1) -- (-1/16, 0.1768) arc(109.4712:-109.4712:3/16) -- (-1/16, -0.9961);
\filldraw[fill = blue] (-1/16, -0.9961) arc (-93.5833:93.5833:1) -- (-1/16, 0.1768) arc(109.4712:-109.4712:3/16) -- (-1/16, -0.9961);
\node at (-0.5, 0.5) {$\ann{d}{1}$};
\draw [->] (-0.2, 0.5) -- (0.2, 0.5);
\draw [very thick] (-1/16, -1) -- (1, -1) -- (1, 1) -- (-1/16, 1) -- (-1/16, 1/8) -- (1/8, 1/8) -- (1/8, -1/8) -- (-1/16, -1/8) -- (-1/16, -1);
\node at (1.2, 1.2) {$\ourbox$};
\draw [->] (1.1, 1.1) -- (0.9, 0.9);
\end{tikzpicture}
\label{fig:zerofrees}
\caption{A drawing of the restricted regions for the roots of $P(z; \cvec) = 1$ in $\extplane(\cvecmin)$, for $\cvec \in \cspace{\cvecgeom}{\cvecmin}{\cvecmax}$.}
\end{figure}
The containing regions for the roots of $P(z; \cvec) = 1$ are shown in Figure 1.

We will frequently use --- even without a reference --- the following.
\begin{rem} \label{rem:inverserealsgn}
If $z \in \CC$, $z \neq 0$, then $ \Re \frac{1}{z}  > 0$ (respectively, $\Re \frac{1}{z} < 0$) if and only if $\Re z > 0$ (respectively, $\Re z < 0$).
\end{rem}
\begin{proof}
It immediately follows from the identity
\begin{equation} \label{eq:invertersign}
\Re \frac{1}{z} = \Re \frac{\overline{z}}{\abs{z}^2} = \frac{1}{\abs{z}^2} \Re \overline{z} = \frac{x}{\abs{z}^2}, \quad z \neq 0, \quad z = x + iy, \quad x, y \text{ real}.
\end{equation}
\end{proof}

\begin{lem} \label{lem:simplezeros}
Fix $0 < \cvecgeom$, and let $\cvecmin, \cvecmax \in \rplus$ satisfy $\cvecmin \leq \cvecgeom \leq \cvecmax$.  Then for all $\cvec \in \cspace{\cvecgeom}{\cvecmin}{\cvecmax}$, if $w$ is a root of $\eqref{eq:maineqn}$ and $w \in \extplane(\cvecmin)$, then $w$ is a simple root of \eqref{eq:maineqn}.  
\end{lem}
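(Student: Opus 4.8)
The plan is to reduce simplicity of $w$ to the non-vanishing of $P'(w)$ and then to control $P'(w)$ via the logarithmic derivative. Since $w$ is a root of \eqref{eq:maineqn}, it is a root of the polynomial $P(z;\cvec) - 1$; this root is multiple precisely when $\left(P(\cdot;\cvec) - 1\right)'(w) = P'(w;\cvec) = 0$ in addition to $P(w;\cvec) = 1$. So it suffices to show $P'(w;\cvec) \neq 0$ for every root $w$ of \eqref{eq:maineqn} lying in $\extplane(\cvecmin)$.

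Because $P(w;\cvec) = 1 \neq 0$, and because each factor $c_k + w$ is nonzero for $w \in \extplane(\cvecmin)$ (see the next paragraph), we may divide and use
\[
\frac{P'(w;\cvec)}{P(w;\cvec)} \;=\; \sum_{k=1}^n \frac{1}{c_k + w},
\]
which is just the logarithmic derivative of $\prod_{k=1}^n (c_k + z)$ evaluated at $w$. Thus $P'(w;\cvec) = 0$ would force $\sum_{k=1}^n \frac{1}{c_k + w} = 0$, and I will rule this out by a real-part argument.

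For the sign computation: if $w \in \extplane(\cvecmin)$ then $\Re w \geq -\cvecmin/3$, and since $c_k \geq \cvecmin$ for all $k$ (as $\cvec \in \cspace{\cvecgeom}{\cvecmin}{\cvecmax} \subseteq [\cvecmin,\cvecmax]^n$), we get $\Re(c_k + w) \geq \cvecmin - \cvecmin/3 = \tfrac{2}{3}\cvecmin > 0$. In particular each $c_k + w$ is nonzero (justifying the division above), and by Remark~\ref{rem:inverserealsgn} each term satisfies $\Re \frac{1}{c_k + w} > 0$. Summing, $\Re \sum_{k=1}^n \frac{1}{c_k + w} > 0$, so the sum is nonzero; hence $P'(w;\cvec) \neq 0$ and $w$ is a simple root of \eqref{eq:maineqn}.

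\textbf{Main obstacle.} There is essentially no deep obstacle here; the only point requiring care is confirming that the denominators $c_k + w$ never vanish on $\extplane(\cvecmin)$ so that the logarithmic derivative identity is legitimate — but this is immediate from the same real-part bound $\Re(c_k + w) \geq \tfrac{2}{3}\cvecmin > 0$ that drives the argument. The choice of the constant $\cvecmin/3$ in the definition \eqref{eq:eextdef} of $\extplane$ is exactly what makes this margin positive.
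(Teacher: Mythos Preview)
Your proof is correct and follows essentially the same approach as the paper: both use the logarithmic derivative identity $P'(w)=P(w)\sum_k \frac{1}{w+c_k}$ together with the real-part bound $\Re(c_k+w)\geq \tfrac{2}{3}\cvecmin>0$ to force $\Re P'(w)>0$. The only cosmetic difference is that the paper additionally invokes Lemma~\ref{lem:basicUB} to bound $|w+c_k|\leq 1+\cvecmax$ and thereby obtain the quantitative estimate $\Re P'(w)\geq n\,\tfrac{2\cvecmin/3}{(1+\cvecmax)^2}$, but this is not needed for the bare simplicity claim.
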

\begin{proof}
Let $w$ be a root of \eqref{eq:maineqn} in $\extplane(\cvecmin)$.  Then for all $k$, $1 \leq k \leq n$, 
\begin{equation} \label{eq:numeratorineq}
\Re (w + c_k) \geq \Re (w + \cvecmin) \geq \frac{2\cvecmin}{3},
\end{equation}
and by \eqref{eq:maineqn}, \eqref{eq:invertersign} and Lemma~\ref{lem:basicUB}, 
\[
\Re \left( \left. \frac{dP}{dz} \right\vert_{z = w} \right)  = \sum_{k = 1}^n \Re \left(\frac{1}{w + c_k} \right) = \sum_{k = 1}^n \frac{\Re (w + c_k)}{\abs{w + c_k}^2} \geq n \frac{2\cvecmin/3}{(1 + \cvecmax)^2} > 0
\]
so $w$ is a simple root of \eqref{eq:maineqn}.
\end{proof}

\section{Restructing of the sequence \texorpdfstring{$\cvec$}{c}} \label{sec:ctod}
Under \eqref{eq:cbounds}, we will not change the orders of the elements in the vectors, and if $c_k = c_{k + 1}$ are identical, we will never do any change to make them nonequal.  Therefore, it now behooves us only pay attention to the \emph{distinct} entries in $\cveclong$.  We choose to write the distinct entries in $\cveclong$ as $(d_0, \dotsc, d_{q})$, 
\begin{equation} \label{eq:dlist}
0 < \cvecmin \leq d_0 < d_1 < \dotsb < d_q \leq D^*,
\end{equation}
so that the number of distinct entries is $1 + q$; i.e., the number of strict inequalities in \eqref{eq:cbounds}, or the number of gaps in \eqref{eq:dlist} is $q$.  Hereafter, we call $q$ the \emph{\diver} of the multiset.  We therefore reformulate $\cvec \in \CC^n$ as a multiset with $q + 1$ distinct entries and total weight $n$.  For a given $\cvec$, let
\begin{equation} \label{eq:klist}
\begin{split}
K_j & \declare \bracepair*{k \in \setn: c_k = d_j}, \, \text{ where } \setn = \bracepair{1, 2, \dotsc, n} \\
&\left( \text {so } \bigcup_{j = 0}^q K_j = \setn , \quad K_j \cap K_{\ell} = \emptyset \text{ if } j \neq \ell \right), 
\end{split}
\end{equation}
and let 
\begin{equation} \label{eq:mlist}
m_j = \card{K_j}, \quad 0 \leq j \leq q, \left( \text{so } \sum_{j = 0}^q m_j = n \right).
\end{equation}
In short, we present $\cvec$ as $\dshort{\mathbf{d}}{\mathbf{m}}{q} = \dlong{d_j}{m_j}{q}$.  In this language, we have
\begin{equation} \label{eq:ourpolydform}
P(z; \mathbf{c}) = \prod_{k = 1}^n (1 + c_k) = \prod_{j = 0}^q (1 + d_j)^{m_j},
\end{equation}
\begin{equation}
\cvecgeom^n = \prod_{k = 1}^n c_j = \prod_{j = 0}^q d_j^{m_j},
\end{equation}
and $\cvecideal$ is presented by $\mathbf{d}^* = \dshort{\cvecgeom}{n}{0}$.  The family $\cspace{\cvecgeom}{\cvecmin}{\cvecmax}$ is presented by the family of multisets
\begin{equation} \label{eq:dspacedef}
\begin{split}
\dspace{\cvecgeom}{\cvecmin}{\cvecmax} &= \bigcup_{q = 0}^{n - 1} \left\lbrace (d_j, m_j)_{j = 0}^q \in ([\cvecmin, \cvecmax] \times (\NN \cup \bracepair{0}))^{q + 1} : \right.\\
& \qquad \left. \prod_{j = 0}^q d_j^{m_j} = \cvecgeom^n, \quad \sum_{j = 0}^q m_j = n \right\rbrace
\end{split} 
\end{equation}

We now construct the basic elements of our path connecting $\dshort{\mathbf{d}}{\mathbf{m}}{q}$ to $\dgroupideal$, or $\cvec$ to $\cvecideal$.  Our goal is to reduce the \diver\ $q$, i.e., the number of gaps, and maintain the geometric mean.  

If $q \geq 2$, we put
\begin{subequations} \label{eq:taufulldef}
\begin{align}
\tau_* = \frac{1}{m_q} \log \left( \frac{d_1}{d_0} \right) \label{eq:taulowdef}\\
\tau^* = \frac{1}{m_0} \log \left( \frac{d_q}{d_{q - 1}} \right)\\
\tau = \min \bracepair{\tau_*, \tau^*} \label{eq:qgentaudef}.
\end{align}
\end{subequations}
We have three cases:
\begin{enumerate}[label = (\Roman*)]
\item \label{enum:lowersmaller} $\tau = \tau_* < \tau^*$,
\item \label{enum:uppersmaller} $\tau_* > \tau^* = \tau$,
\item \label{enum:luequal} $\tau_* = \tau^* = \tau$. 
\end{enumerate}

We now construct a path for the sequence \eqref{eq:cbounds}, or for the multiset \eqref{eq:dlist}, parameterized by $t$ in $[0, \tau)$ and $[0, \tau]$.  We define on $[0, \tau)$
\begin{equation}\label{eq:dkdefhgen}
d_j(t) = \begin{cases} d_0 \exp(m_q t), & j = 0,\\
d_j, & 0 < j < q\\
d_q \exp(-m_0 t), & j = q \end{cases} .
\end{equation} 
We note that the multiplicities are unchanged on $[0, \tau)$: for $0 < j < q$, the $d_j$ do not move, and for $0 < t < \tau \leq \tau_*$, by \eqref{eq:taulowdef},
\[
d_0 < d_0(t) = d_0 \exp (m_q t) < d_0 \exp (m_q \tau_*) = d_1;
\]
similarly, $d_{q}(t) > d_{q - 1}(t)$ for $t$ in $[0, \tau)$.  The geometric mean is preserved:
\begin{equation} \label{eq:geompreserve}
\begin{split}
\prod_{j = 0}^q d_j(t)^{m_j} &= (d_0 \exp (m_q t))^{m_0} \cdot \left( \prod_{j = 1}^{q - 1} d_j^{m_j} \right) \cdot (d_q \exp ( - m_0 t))^{m_q}\\
& = \left( \prod_{j = 0}^q d_j^{m_j}\right)  \cdot \exp(m_q m_0 t - m_0 t m_q) = \prod_{j = 0}^q d_j^{m_j} = \cvecgeom^n.
\end{split}
\end{equation}
Since $d_0(t)$ is increasing and $d_q(t)$ is decreasing, we have that if $\dshort{\mathbf{d}(0)}{\mathbf{m}}{q}$ is in $\dspace{\cvecgeom}{\cvecmin}{\cvecmax}$, then so is $\dshort{\mathbf{d}(t)}{\mathbf{m}}{q}$ for all $t$ in $(0, \tau)$.  The cases differ in the appropriate extension when $t = \tau$.  
\begin{enumerate}[label = (\Roman*)]
\item In this case, $\tau = \tau_*$, so $\lim_{t \to \tau^-} d_0(t) = d_1$, but $\tau \neq \tau^*$, so $\lim_{t \to \tau^-} d_q(t) = d_q \exp (- m_0 \tau_*) > d_{q-1}$.  Therefore, the end multiset $\bracepair{(d_j^{\prime}, m_j^{\prime})}_{j = 0}^{q^{\prime}}$ at $t = \tau$ is defined with
\begin{equation} \label{eq:caseOnefinaldisposition}
\begin{gathered}
q^{\prime} = q - 1,\\
 m_0^{\prime} = m_0 + m_1, \quad m_j^{\prime} = m_{j = 1}, 1 \leq j \leq q^{\prime},\\
d_0^{\prime} = d_1 = d_0 \exp(m_q \tau), \, \, d_j^{\prime} = d_{j + 1} \text{ for }1 \leq j < q^{\prime}, d^{\prime}_{q^{\prime}} = d_q \exp (- m_0 \tau).\\
\end{gathered}
\end{equation} 
In short, the $0$th and $1$st points of the multiset \eqref{eq:dlist} have coalesced.  Again, the geometric mean is $\cvecgeom$, by continuity, and the end multiset belongs to $\dspace{\cvecgeom}{\cvecmin}{\cvecmax}$.  
\item In this case, $\tau = \tau^*$ so $\lim_{t \to \tau^-} d_q(t) = d_0 \exp(m_q \tau^*)= d_{q-1}$, but $\tau \neq \tau_*$, so $\lim_{t \to \tau^-} d_0(t) < d_{1}$.  Therefore, the end multiset $\bracepair{(d_j^{\prime}, m_j^{\prime})}_{j = 0}^{q^{\prime}}$ at $t = \tau$ is defined with
\begin{equation} \label{eq:caseTwofinaldisposition}
\begin{gathered}
q^{\prime} = q - 1,\\
 m_j^{\prime} = m_j, 0 \leq j < q^{\prime}, \quad m^{\prime}_{q^{\prime}} = m_{q - 1} + m_q,\\ 
d_0^{\prime} = d_0 \exp(m_q \tau), \, \, d_j^{\prime} = d_{j + 1} \text{ for }1 \leq j < q^{\prime}, d^{\prime}_{q^{\prime}} = d_{q-1} = d_q \exp (- m_0 \tau).\\
\end{gathered}
\end{equation} 
In short, the $(q-1)$st and $q$th points of the multiset \eqref{eq:dlist} have coalesced.  Again, the geometric mean is $\cvecgeom$, by continuity, and the end multiset belongs to $\dspace{\cvecgeom}{\cvecmin}{\cvecmax}$.
\item In this case, we have both the lowest $2$ and upper $2$ points of the multiset \eqref{eq:dlist} coalescing.  It behooves us to separate out the case $q > 2$ (so $d_1 \neq d_{q-1}$) and $q = 2$ (where $d_1 = d_{q - 1}$).  
	\begin{enumerate}[label = (\alph*)]
	\item \label{enum:caseA}
	If $q > 2$, then $q^{\prime} = q - 2$, and the end multiset $\bracepair{(d_j^{\prime}, m_j^{\prime})}_{j = 0}^{q^{\prime}}$ at $t = \tau$ is defined with
	\begin{equation} \label{eq:caseThreeAfinaldisposition}
	\begin{gathered}
	 m_0^{\prime} = m_0 + m_1, \quad m_j^{\prime} = m_{j + 1}, \text{ for } 2 \leq j < q^{\prime}, \quad m^{\prime}_{q^{\prime}} = m_{q - 1} + m_q,\\ 
	d_0^{\prime} = d_1 = d_0 \exp(m_q \tau), \, \, d_j^{\prime} = d_{j + 1} \text{ for } 1 \leq j < q^{\prime}, \quad d^{\prime}_{q^{\prime}} = d_{q-1} = d_q \exp (- m_0 \tau).\\
	\end{gathered}
	\end{equation} 
	\item \label{enum:caseB} If $q = 2$, then $q^{\prime} = 0$, and $m_0^{\prime} = m_0 + m_1 + m_2$, $d_{0}^{\prime} = d_1$.  Since the geometric mean is preserved we must have $\dgroupideal$, a multi-singleton, our goal.
	\end{enumerate}	  
\end{enumerate}

Finally, we handle the $q = 1$ case.
\begin{enumerate}[label = (\Roman*),resume]
\item \label{enum:qone} If $q = 1$, we find $\tau > 0$ such that
\begin{equation} \label{eq:qonetaudef}
\begin{gathered} d_0 \exp(m_1 \tau) = d_1 \exp(- m_0 \tau), \quad \text{i.e.,}\\
\tau = \frac{1}{n} \log \frac{d_1}{d_0} \end{gathered}
\end{equation}
and set 
\begin{equation} \label{eq:dpathqone}
d_0(t) = d_0 \exp(m_1 t), \quad d_1(t) = d_1 \exp(-m_0 t), \quad 0 \leq t < \tau.
\end{equation}
For $t = \tau$, we change to the multi-singleton $\dshort{d_0^{\prime}}{n}{0}$, $d_0^{\prime} = d_0 \exp(m_1 \tau) = d_1 \exp(- m_0 \tau)$.  Again, since this process does not change the geometric mean, we end up at $\dgroupideal = \dshort{\cvecgeom}{n}{0}$.  
\end{enumerate}
If we wish to speak in terms of $\bracepair{\cvec(t)}$ we always follow \eqref{eq:dlist}-- \eqref{eq:mlist} so
\begin{equation} \label{eq:cdinvert}
c_k(t) = d_j(t), \quad k \in K_j, \quad 0 \leq j \leq q.
\end{equation}

On each step, the coordinates of $\cvec(t)$ have the structure $B \exp (\beta t)$ with $0 < B \leq \cvecmax$ and $\abs{\beta} \leq n$, so the following condition holds:
\begin{equation} \label{eq:cderbounds}
\abs{\cvec^{\prime}(t)} \leq n \cvecmax, \quad \abs{ \cvec^{\prime \prime}(t)} \leq n^2 \cvecmax.  
\end{equation}

We summarize our desired reduction of steps as follows.
\begin{prop} \label{prop:varypropC}
Fix positive real numbers $0 < \cvecmin \leq \cvecgeom \leq \cvecmax$, with $\cvecgeom < 1$.  Fix $\cvec_0 \in \cspace{\cvecgeom}{\cvecmin}{\cvecmax}$, $\cvec_0 \neq \cvecideal$.  Then with $\tau$ defined as in \eqref{eq:qonetaudef} if $q = 1$ and \eqref{eq:qgentaudef} if $q \geq 2$, we have defined a $C^{\infty}$ function $\cvec(t): [0, \tau] \to \cspace{\cvecgeom}{\cvecmin}{\cvecmax}$ such that 
\begin{enumerate}[label = (\alph*)]
\item \label{enum:init} $\cvec(0) = \cvec_0$;
\item \label{enum:hdecr} letting $q^{\prime}$ denote the number of gaps in the $d$-notation for $\cvec(\tau)$,
\begin{equation}
q^{\prime} \leq q - 1.
\end{equation}
Moreover, in Cases~\ref{enum:luequal}.\ref{enum:caseB} and \ref{enum:qone}, $\cvec(t) = \cvecideal$.
\end{enumerate}
\end{prop}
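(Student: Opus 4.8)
Proposition~\ref{prop:varypropC} is a bookkeeping summary of the construction carried out in \eqref{eq:taufulldef}--\eqref{eq:dpathqone} together with the terminal prescriptions \eqref{eq:caseOnefinaldisposition}, \eqref{eq:caseTwofinaldisposition}, \eqref{eq:caseThreeAfinaldisposition}, and the $q=1$ rule; the plan is therefore to verify, in turn, that the map $\cvec(\cdot)$ already written down possesses the three advertised properties. First note that $\tau$ is a well-defined, strictly positive, finite real: since $\cvec_0 \neq \cvecideal$ forces $\cvec_0$ to have at least two distinct entries (a single-valued multiset of weight $n$ with geometric mean $\cvecgeom$ would \emph{be} $\cvecideal$), we have $q \geq 1$; for $q \geq 2$ both $\tau_*$ and $\tau^*$ in \eqref{eq:taufulldef} are positive because $d_1 > d_0$, $d_q > d_{q-1}$ and $m_0, m_q \geq 1$, and for $q = 1$ one has $\tau = \frac{1}{n}\log(d_1/d_0) > 0$.

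\emph{Smoothness and property~\ref{enum:init}.} Through \eqref{eq:cdinvert}, every coordinate of $\cvec(t)$ equals one of $d_0 \exp(m_q t)$, a constant $d_j$ with $0 < j < q$, or $d_q \exp(-m_0 t)$; that is, each coordinate has the form $B\exp(\beta t)$ for one fixed pair $(B, \beta)$ with $0 < B \leq \cvecmax$ and $\abs{\beta} \leq n$, valid on all of $[0, \tau]$. The case distinction at $t = \tau$ merely records which coordinates have become equal, i.e.\ how the multiset is relabelled in $d$-notation; it does not alter the curve $\cvec(\cdot)$. Hence $\cvec(\cdot)$ is real-analytic, in particular $C^{\infty}$, on $[0, \tau]$, and differentiating the exponentials once and twice yields \eqref{eq:cderbounds}. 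Evaluating at $t = 0$ and using $\exp(0) = 1$ gives $\cvec(0) = \cvec_0$.

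\emph{Membership in $\cspace{\cvecgeom}{\cvecmin}{\cvecmax}$.} Three facts are needed for each $t \in [0, \tau]$. (i) The geometric mean stays $\cvecgeom$: this is the computation \eqref{eq:geompreserve}, in which the exponents $m_q m_0 t$ and $m_0 m_q t$ cancel, and it extends to $t = \tau$ by continuity. (ii) Every coordinate stays in $[\cvecmin, \cvecmax]$: $d_0(t)$ increases from $d_0 \geq \cvecmin$ and, since $\tau \leq \tau_*$, stays below $d_0 \exp(m_q \tau_*) = d_1 \leq \cvecmax$; symmetrically $d_q(t)$ decreases from $d_q \leq \cvecmax$ and stays above $d_{q-1} \geq \cvecmin$; the intermediate $d_j$ are constant and already in $[\cvecmin, \cvecmax]$ (when $q = 1$ one argues the same way using $(m_0 + m_1)/n = 1$). (iii) The \diver\ never exceeds $n-1$: for $t \in [0, \tau)$ the strict orderings of \eqref{eq:cbounds}--\eqref{eq:dlist} are preserved --- precisely the check performed just after \eqref{eq:dkdefhgen} --- so the \diver\ is still $q \leq n-1$, and at $t = \tau$ it only drops. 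Consequently $\cvec(t) \in \dspace{\cvecgeom}{\cvecmin}{\cvecmax}$, i.e.\ $\cvec(t) \in \cspace{\cvecgeom}{\cvecmin}{\cvecmax}$.

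\emph{Property~\ref{enum:hdecr} and the ``moreover''.} Reading off the terminal prescriptions: in Cases~\ref{enum:lowersmaller} and \ref{enum:uppersmaller} exactly one adjacent pair of the $d_j$'s coalesces, so $q' = q-1$; in Case~\ref{enum:luequal} with $q > 2$ (subcase~\ref{enum:caseA}) two disjoint adjacent pairs coalesce, so $q' = q - 2$; in Case~\ref{enum:luequal}.\ref{enum:caseB} ($q = 2$) and in Case~\ref{enum:qone} ($q = 1$) the entire multiset collapses to a single value, so $q' = 0$. In every instance $q' \leq q - 1$. Finally, whenever $q' = 0$ the terminal multiset has all $n$ entries equal to one common value, and since its geometric mean is $\cvecgeom$ that value must be $\cvecgeom$; hence $\cvec(\tau) = \cvecideal$ in Cases~\ref{enum:luequal}.\ref{enum:caseB} and \ref{enum:qone}. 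There is no genuine obstacle here, as the argument is entirely verification; the single point that deserves care is the behaviour at the right endpoint $t = \tau$, where one must confirm that $\lim_{t \to \tau^-} d_0(t)$ and $\lim_{t \to \tau^-} d_q(t)$ match the coalescences declared in \eqref{eq:caseOnefinaldisposition}--\eqref{eq:caseThreeAfinaldisposition} (and in the $q = 1$ rule), and that re-expressing the limiting multiset with fewer gaps is a pure relabelling that leaves the $C^{\infty}$ curve $\cvec(\cdot)$ unchanged.
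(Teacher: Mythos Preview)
Your proposal is correct and follows the same approach as the paper: the proposition is a summary of the explicit construction carried out in the preceding equations, and its ``proof'' amounts to reading off the properties of that construction, exactly as you do. The paper does not supply a separate proof environment here, treating the construction itself as the justification; your write-up simply makes the verification more explicit.
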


\subsubsection{Extension of Path} \label{subsect:extension}
For the technical arguments later in the paper, we will need to extend the paths $\mathbf{d}(t)$, $\mathbf{c}(t)$ beyond $[0, \tau]$; indeed, for the Implicit Function Theorem, we wish to use complex values for $t$.  Of course, the formulas in \eqref{eq:taufulldef} -- \eqref{eq:dkdefhgen}, \eqref{eq:dpathqone} are valid for all $t \in \CC$, but for any $\rho \in \left( 0, \frac{\log 3}{2n} \right)$, we may simply extend it to the $\CC$-neighborhood
\begin{equation} \label{eq:Jrhodef}
J_{\rho} = \bracepair{\xi \in \CC: - \rho \leq \Re \xi \leq \tau + \rho, \quad \abs{\Im \xi} \leq \rho}
\end{equation} 
Of course, \eqref{eq:geompreserve} still holds, so the geometric mean is preserved, and by the bounds on $\rho$, for any real $r \in [-\rho, \tau + \rho] = J_{\rho} \cap \RR$,
\begin{equation} \label{eq:dzerolowerbound}
d_0 (r) = d_0 \exp(m_q r) \geq d_0 \exp (- m_q \rho) > \cvecmin \exp \left( - n \frac{\log 3}{2n} \right) = \frac{\cvecmin}{\sqrt{3}} 
\end{equation}
and 
\begin{equation} \label{eq:dzeroupperbound}
d_0 (r) = d_0 \exp(m_q r) \leq d_0 \exp (m_q \tau) \exp (m_q \rho)  < d_1 \exp \left( n \frac{\log 3}{2n} \right) \leq \sqrt{3} \cvecmax. 
\end{equation}
Similar bounds hold for $d_q$.  Therefore, for any $t \in \eqref{eq:Jrhodef}$,  we have
\[
\begin{split}
d_0(t) &= d_0 \exp ( m_q t) = d_0 \exp (m_q \Re t + i m_q \Im t)\\
& = d_0 \exp( m_q \Re t) \left[ \cos(m_q \Im t) + i \sin(\Im t) \right].
\end{split}
\]
Of course,
\[
\abs{d_0(t)} = d_0 \exp(m_q \Re t),
\]
so by \eqref{eq:dzerolowerbound} and \eqref{eq:dzeroupperbound},
\[
\frac{\cvecmin}{\sqrt{3}} \leq \abs{d_0(t)} \leq \sqrt{3} \cvecmin,
\]
but we also wish to bound the real and imaginary parts separately.  By $\rho < \frac{\log 3}{2n} < \frac{\pi}{6n}$, $ \abs{m_q \Im t} \leq n \rho <  n \frac{\pi}{6n} = \frac{\pi}{6}$, and so with \eqref{eq:dzerolowerbound}, we have
\[
\Re d_0(t) = d_0 \exp (m_q \Re t) \cos (m_q \Im t) > \frac{\cvecmin}{\sqrt{3}} \cdot \cos \left( \frac{\pi}{6} \right) = \frac{\cvecmin}{2} ,
\]
and by \eqref{eq:dzeroupperbound} we have
\[
\Re d_0(t) =	d_0 \exp (m_q \Re t) \cos (m_q \Im t) \leq \sqrt{3} \cvecmin 
\]
and 
\[
\abs{\Im d_0(t)} = \abs{d_0 \exp(m_q \Re t) \sin (m_q \Im t} \leq  \sqrt{3} \cvecmin \cdot \sin \left( \frac{\pi}{6} \right) = \frac{\sqrt{3}}{2} \cvecmin.
\]
Similar inequalities hold for $d_q(t)$, and if $q > 1$, then $d_1(t), \dotsc, d_{q - 1}(t)$ are still positive constants in $[\cvecmin, \cvecmax]$.  

We create $\cvec(t)$ as in \eqref{eq:cdinvert}, but using the initial $\mathbf{m}$ and $K_j$'s to define the multiplicities, i.e.,
\begin{equation} \label{eq:cvecext}
\cvec(t) = ( \overbrace{d_0(t), \dotsc, d_0(t)}^{m_0 \text{ terms}},  \overbrace{d_1(t), \dotsc, d_1(t)}^{m_1 \text{ terms}}, \dotsc, \overbrace{d_q(t), \dotsc, d_q(t)}^{m_q \text{ terms}} ),
\end{equation}
so that
\begin{equation} \label{eq:polyext}
P(z; \cvec(t)) = \prod_{k = 1}^n (z + c_k(t)) = \prod_{j = 0}^q (z + d_j(t))^{m_j}.
\end{equation}
Of course, for $t > \tau$, $d_0(t) > d_1(t)$ or $d_{q- 1}(t) < d_q(t)$, but \eqref{eq:cvecext} still gives the same polynomials as the previous constructions for $t \in [0, \tau]$, and shows that the \diver\ is at most $q$.  We therefore have the following.
\begin{lem} \label{lem:extensions}
Let $\cvec_0 \in \cspace{\cvecgeom}{\cvecmin}{\cvecmax}$, fix $\rho \in \left( 0, \frac{\log 3}{2n} \right)$, and with $J_{\rho}$ as in \eqref{eq:Jrhodef}, define  $\cvec(t)$ for $t \in J_{\rho}$ as in \eqref{eq:cvecext}, for $\dvec(t)$ as in \eqref{eq:dkdefhgen} for $q > 1$ and \eqref{eq:dpathqone} for $q = 1$. Then $\cvec(t)$ is a holomorphic function on $J_{\rho}$, and the image of  $J_{\rho}$ is inside $\left[R(\cvecmin, \cvecmax)\right]^n$, where
\begin{equation} \label{eq:cextboundingbox}
R(\cvecmin, \cvecmax) = \bracepair*{\xi \in \CC: \frac{\cvecmin}{2} < \Re \xi \leq \sqrt{3} \cvecmax, \, \abs{\Im \xi} \leq \frac{\sqrt{3}}{2} \cvecmax},
\end{equation}
and
\begin{equation} \label{eq:cextabsbounds}
\frac{\cvecmin}{\sqrt{3}} \leq \abs{c_k(t)} \leq \sqrt{3} \cvecmax, 1 \leq k \leq n.
\end{equation} 

In particular, $\cvec(t) \in \cspace{\cvecgeom}{\frac{\cvecmin}{2}}{2\cvecmax}$ for $t \in [- \rho, \tau + \rho]$, and $\cvec(t) \in \cspace{\cvecgeom}{\cvecmin}{\cvecmax}$ for $t \in [0, \tau]$.  
\end{lem}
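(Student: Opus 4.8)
\textbf{Proof plan for Lemma~\ref{lem:extensions}.}
The plan is simply to assemble the component-wise estimates already derived in the ``Extension of Path'' discussion into the asserted product bound, and to read off the two containments from the geometric-mean identity \eqref{eq:geompreserve}. First, holomorphy: each coordinate $c_k(t)$ equals some $d_j(t)$, and by \eqref{eq:dkdefhgen} (resp.\ \eqref{eq:dpathqone} when $q = 1$) every $d_j(t)$ is either a positive real constant or a function of the form $B\exp(\gamma t)$ with $B > 0$ real and $\gamma \in \ZZ$. Such functions are entire, so $\cvec(t)$ is holomorphic on all of $\CC$, a fortiori on $J_\rho$.

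Second, the product bound. For $1 \le j \le q-1$ the value $d_j(t) = d_j$ is a constant in $[\cvecmin, \cvecmax]$, which lies in $R(\cvecmin, \cvecmax)$. For $j = 0$, the constraint $\rho < \frac{\log 3}{2n}$ gives $\exp(\pm m_q \rho) < \sqrt3$ and $\abs{m_q \Im t} \le n\rho < \frac{\pi}{6}$; combining $\abs{d_0(t)} = d_0 \exp(m_q \Re t)$ with the bounds $d_0 \ge \cvecmin$ and $d_0 \exp(m_q \tau) = d_1 \le \cvecmax$ established in \eqref{eq:dzerolowerbound}--\eqref{eq:dzeroupperbound}, and then splitting into real and imaginary parts using $\cos(\frac{\pi}{6}) = \frac{\sqrt3}{2}$, $\sin(\frac{\pi}{6}) = \frac12$, yields $\frac{\cvecmin}{2} < \Re d_0(t) \le \sqrt3\cvecmax$ and $\abs{\Im d_0(t)} \le \frac{\sqrt3}{2}\cvecmax$, i.e.\ $d_0(t) \in R(\cvecmin, \cvecmax)$, together with $\frac{\cvecmin}{\sqrt3} \le \abs{d_0(t)} \le \sqrt3\cvecmax$. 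The identical argument applied to $d_q(t) = d_q\exp(-m_0 t)$ gives $d_q(t) \in R(\cvecmin, \cvecmax)$ with the same modulus bounds. Since every coordinate of $\cvec(t)$ is one of $d_0(t), \dotsc, d_q(t)$, we conclude $\cvec(t) \in [R(\cvecmin, \cvecmax)]^n$ and \eqref{eq:cextabsbounds}.

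Third, the two containments. The identity \eqref{eq:geompreserve} is an identity of entire functions of $t$, so $\prod_j d_j(t)^{m_j} = \cvecgeom^n$ for every $t \in \CC$, whence $\prod_k c_k(t) = \cvecgeom^n$. For real $t \in [-\rho, \tau + \rho]$ every $d_j(t)$ is real and positive, and the bounds of the previous step specialize to $\frac{\cvecmin}{2} < d_0(t), d_q(t) \le \sqrt3\cvecmax < 2\cvecmax$, while $d_j(t) \in [\cvecmin, \cvecmax]$ for $0 < j < q$; with the preserved geometric mean this gives $\cvec(t) \in \cspace{\cvecgeom}{\cvecmin/2}{2\cvecmax}$. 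For $t \in [0, \tau]$ the construction coincides with the path of Section~\ref{sec:ctod} (indeed \eqref{eq:cvecext} reproduces the same multiset there, as already noted), along which $d_0(t)$ increases from $d_0 \ge \cvecmin$, $d_q(t)$ decreases from $d_q \le \cvecmax$, and the $d_j$ with $0 < j < q$ stay fixed in $[\cvecmin, \cvecmax]$; so all coordinates lie in $[\cvecmin, \cvecmax]$ and, the geometric mean being preserved, $\cvec(t) \in \cspace{\cvecgeom}{\cvecmin}{\cvecmax}$.

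The argument is entirely bookkeeping; the only point needing care --- the ``main obstacle'' such as it is --- is verifying that the single cutoff $\rho < \frac{\log 3}{2n}$ simultaneously controls the real growth factor $\exp(\pm m_j \rho) < \sqrt3$ and the angular displacement $\abs{m_j \Im t} < \frac{\pi}{6}$, so that neither the modulus nor the real and imaginary parts of $d_0(t)$ and $d_q(t)$ leave the enlarged box $R(\cvecmin, \cvecmax)$. A minor accompanying observation is that for $t > \tau$ the ordering $d_0 < d_1 < \dotsb < d_q$ may fail, yet by \eqref{eq:polyext} this affects neither $P(z; \cvec(t))$ nor the bound $q$ on the diversity, so the extension is harmless for the later arguments.
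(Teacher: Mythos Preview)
Your proposal is correct and follows essentially the same approach as the paper: the lemma is stated precisely as a summary of the estimates \eqref{eq:dzerolowerbound}--\eqref{eq:dzeroupperbound} and the surrounding computations in Subsection~\ref{subsect:extension}, and you have reproduced that assembly faithfully. One small imprecision: you write $d_0\exp(m_q\tau)=d_1$, but in Cases~\ref{enum:uppersmaller} and \ref{enum:qone} this is only an inequality $d_0\exp(m_q\tau)\le d_1$ (in Case~\ref{enum:qone} it equals the weighted geometric mean $d_0^{m_0/n}d_1^{m_1/n}$); the conclusion is unaffected since $\le d_1\le\cvecmax$ is all that is needed.
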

In Case~\ref{enum:luequal}, part \ref{enum:caseA}, or Case~\ref{enum:qone}, we will not consider $\cvec(t)$ for $t > \tau$.  

\section{Reduction of the \diver\ \texorpdfstring{$q$}{q}} \label{sec:method}
In the $d$-notation \eqref{eq:dlist} -- \eqref{eq:mlist}, our polynomial becomes
\begin{equation} \label{eq:ourpolydformreminder}
P(z, \cvec) = \prod_{j = 0}^q (z + d_j)^{m_j}.
\end{equation}

In Section~\ref{sec:ctod}, we have chosen the path $\cvec(t)$ or $\mathbf{d}(t)$, $0 \leq t \leq \tau$, which reduces the \diver\ $q$ of the initial multiset
\begin{equation}
\mathbf{r} = (r_k)_{k = 1}^n = \cvec(0)
\end{equation}
to $q^{\prime} = q - 1$ or $q - 2$ when we move to $\cvec(\tau)$, i.e., $\mathbf{d}(\tau)$.  The polynomial \eqref{eq:ourpolydformreminder} changes accordingly, and we want to understand how its roots are changing, in particular, when $t$ is close to $0$ or $\tau$.  In what follows, as in Subsection~\ref{subsect:extension}, $\cvec(t)$ is defined by \eqref{eq:dkdefhgen} or \eqref{eq:dpathqone}, i.e. by \eqref{eq:cvecext}, for $- \rho \leq \tau \leq \tau + \rho$, for small enough $\rho$.

\begin{prop} \label{prop:omnibus}
Fix $0 < \cvecmin \leq \cvecgeom \leq \cvecmax$, with $\cvecgeom < 1$.  Let $\mathbf{r} = \cvec(0) \in \cspace{\cvecgeom}{\cvecmin}{\cvecmax}$, and $w \in E$ be a root of the equation 
\begin{equation} \label{eq:polystarttime}
P(z; \mathbf{r}) = 1.
\end{equation}
Then for sufficiently small $\eta > 0$, there exists an unique analytic function $w(t)$, $t \in J_{\eta} \in \eqref{eq:Jrhodef}$, such that
\begin{gather}
w(0) = w,\\
P(w(t), \cvec(t)) = 1, \quad t \in J_{\eta}. \label{eq:solutionGen}
\end{gather}
If $t \in [-\eta, \tau]$, then $w(t) \in E \cap \ann{\frac{d}{2}}{1}$.  If, in addition, $\Re w(t) \in [-\epsilon, \epsilon]$,
\begin{equation} \label{eq:epsilondef}
\epsilon \declare \min \bracepair*{ \frac{\cvecmin}{12},  \frac{\delta(\cvecgeom, \sqrt{3} \cvecmax)^2}{4(1 +  \sqrt{3}\cvecmax)}},
\end{equation}
then
\begin{equation} \label{eq:introwtdersign}
\Re \wtder(t) > 0.  
\end{equation}
\end{prop}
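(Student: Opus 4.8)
\emph{Set-up and existence.} The plan is to apply the holomorphic implicit function theorem to $F(z,t)\declare P(z;\cvec(t))-1$, then control the resulting branch $w(t)$ on $[-\eta,\tau]$ via a differential inequality for $\Re\wtder$. By Lemma~\ref{lem:extensions} the map $t\mapsto\cvec(t)$ is holomorphic on $J_\rho$, so $F$ is jointly holomorphic; $F(w,0)=0$ since $\mathbf r=\cvec(0)$ and $P(w;\mathbf r)=1$; and $\partial_z F(w,0)=\frac{dP}{dz}(w;\mathbf r)\neq0$ because $w\in E\subseteq\extplane(\cvecmin)$ and $\mathbf r\in\cspace{\cvecgeom}{\cvecmin}{\cvecmax}$, so Lemma~\ref{lem:simplezeros} applies. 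The holomorphic IFT gives a unique analytic branch $w(\cdot)$ near $t=0$, and its continuation over $J_\eta$ is unobstructed as long as the tracked point remains a \emph{simple} root; by Lemma~\ref{lem:simplezeros} for the enlarged family $\cspace{\cvecgeom}{\cvecmin/2}{2\cvecmax}$ (which contains $\cvec(t)$, by Lemma~\ref{lem:extensions}) this holds whenever $w(t)\in\extplane(\cvecmin/2)$, which the containment obtained below plus continuity guarantee after shrinking $\eta$. For real $t\in[0,\tau]$ one has $\cvec(t)\in\cspace{\cvecgeom}{\cvecmin}{\cvecmax}$, so Corollary~\ref{cor:zerocontainmentannulus} gives $w(t)\in\ann{d}{1}$; by continuity $w(t)\in\ann{\frac{d}{2}}{1}$ throughout $[-\eta,\tau]$ once $\eta$ is small.

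\emph{The formula for $\wtder$.} Differentiate $P(w(t),\cvec(t))=1$ in $t$. At a root, $\prod_k(w+c_k)=1$, so the $z$-derivative of $P$ equals $S\declare\sum_k(w+c_k)^{-1}$ and the $c_k$-derivative equals $(w+c_k)^{-1}$; hence $S\,\wtder+\sum_{k=1}^n(w+c_k)^{-1}\dot c_k(t)=0$. Along $\cvec(t)$ one has $c_k(t)=B_k\exp(\gamma_k t)$ with $\gamma_k\in\{m_q,0,-m_0\}$ (or $\{m_1,-m_0\}$ if $q=1$), so $\dot c_k=\gamma_k c_k$, and $\sum_k\gamma_k=0$ because $\prod_k c_k(t)\equiv\cvecgeom^n$ (the geometric mean is conserved, \eqref{eq:geompreserve}). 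With $\frac{c_k}{w+c_k}=1-\frac{w}{w+c_k}$ this gives $\sum_k\frac{\dot c_k}{w+c_k}=-w\sum_k\frac{\gamma_k}{w+c_k}=-m_0 m_q\,w\!\left(\frac{1}{w+a}-\frac{1}{w+b}\right)$, where $a=d_0(t)$, $b=d_q(t)$ are the extreme coefficients (with multiplicities $m_0$, $m_q$). Therefore
\[
\wtder(t)=\frac{m_0 m_q\,(b-a)\,w}{(w+a)(w+b)\,S},\qquad S=\sum_{k=1}^n\frac{1}{w+c_k}.
\]
For $t\in[-\eta,\tau)$, $a=\min_k c_k(t)<\max_k c_k(t)=b$, so $b-a>0$, and it suffices to show $\Re\!\left[\frac{w\,\overline S}{(w+a)(w+b)}\right]>0$ under $\Re w\in[-\epsilon,\epsilon]$.

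\emph{The sign, via a convex combination.} Since $c_k$ is real with $a\le c_k\le b$, partial fractions give $\frac{w+c_k}{(w+a)(w+b)}=\frac{\lambda_k}{w+a}+\frac{\mu_k}{w+b}$ with $\lambda_k=\frac{c_k-a}{b-a}\ge0$, $\mu_k=\frac{b-c_k}{b-a}\ge0$, $\lambda_k+\mu_k=1$; using $\overline S=\sum_k(w+c_k)/\abs{w+c_k}^2$,
\[
\frac{w\,\overline S}{(w+a)(w+b)}=\sum_{k=1}^n\frac{1}{\abs{w+c_k}^2}\left(\lambda_k\,\frac{w}{w+a}+\mu_k\,\frac{w}{w+b}\right).
\]
Now $\Re\frac{w}{w+c}=\frac{\abs{w}^2+c\,\Re w}{\abs{w+c}^2}$, which for $c\in\{a,b\}$ and $\Re w\in[-\epsilon,\epsilon]$ is positive: $\abs{w}>\frac{d}{2}$ and $c\le\sqrt3\,\cvecmax$ along the path (Lemma~\ref{lem:extensions}), while the second entry in the minimum of \eqref{eq:epsilondef} makes $\abs{w}^2-c\epsilon>0$. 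Hence every summand above has positive real part (the $c_k=a$ terms occur since $m_0\ge1$), so $\Re\!\left[\frac{w\,\overline S}{(w+a)(w+b)}\right]>0$; also $S\neq0$, as $\Re S=\sum_k\frac{\Re w+c_k}{\abs{w+c_k}^2}>0$ once $\Re w>-c_k$, which holds since $\epsilon\le\frac{\cvecmin}{12}<\cvecmin/\sqrt3\le c_k$ (first entry of \eqref{eq:epsilondef}). Thus $\Re\wtder(t)=\frac{m_0 m_q(b-a)}{\abs{S}^2}\,\Re\!\left[\frac{w\,\overline S}{(w+a)(w+b)}\right]>0$, which is \eqref{eq:introwtdersign}. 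The containment $w(t)\in E$ then follows by a barrier argument: if $\Re w$ ever reached $0$, then at the first such $t_0$ we would have $\abs{\Re w}\le\epsilon$ near $t_0$, hence $\Re\wtder>0$ there, forcing $\Re w(t)<0$ for $t$ just below $t_0$ — contradicting minimality; if $\Re w(0)=0$, the same argument run forward places $w(t)\in E^+$ for $t>0$.

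\emph{Main obstacle.} The crux is \eqref{eq:introwtdersign}: once the path is differentiated, everything turns on recognizing that $\frac{w(w+c_k)}{(w+a)(w+b)}$ is a \emph{convex} combination of $\frac{w}{w+a}$ and $\frac{w}{w+b}$ — which is exactly what forces $\Re\wtder>0$ for \emph{every} admissible $\cvec$ near the imaginary axis, not just those close to $\cvecideal$ — together with calibrating $\epsilon$ in \eqref{eq:epsilondef} so that both $\Re\frac{w}{w+a},\Re\frac{w}{w+b}>0$ and $S\neq0$. A secondary delicacy is keeping the barrier step non-circular: \eqref{eq:introwtdersign} may be invoked only where $w(t)$ is already known to lie in $E$ (equivalently, in $\extplane(\cvecmin/2)$), so the continuation, the containment, and the derivative bound have to be bootstrapped together.
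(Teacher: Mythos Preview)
Your argument is correct, and the core of it---the sign analysis for $\Re\wtder$---takes a genuinely different route from the paper. The paper writes $\wtder=\frac{m_0m_q(d_q-d_0)}{H(w)}$ with $H(z)=\frac{(z+a)(z+b)}{z}\sum_j\frac{m_j}{z+d_j}$ and expands $H=n+T_2+T_3$ termwise (via the identity $\frac{(z+a)(z+b)}{z(z+c)}=1+\frac{a+b-c}{z}+\frac{(c-a)(b-c)}{-z(z+c)}$), then bounds $\abs{\Re T_2}\le\frac{n}{4}$ and shows $\Re T_3\ge0$ separately, handling $q=1$ and $q\ge2$ in distinct computations. You instead multiply numerator and denominator by $\overline S$ and use the partial-fraction identity $\frac{w+c_k}{(w+a)(w+b)}=\frac{\lambda_k}{w+a}+\frac{\mu_k}{w+b}$ with $\lambda_k,\mu_k\ge0$, $\lambda_k+\mu_k=1$, reducing the question to positivity of $\Re\frac{w}{w+a}$ and $\Re\frac{w}{w+b}$ alone. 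This is cleaner: it treats all $q$ uniformly, makes the role of the ordering $a\le c_k\le b$ transparent (it is exactly what makes the combination convex), and avoids the somewhat ad hoc split into $T_2,T_3$. The paper's decomposition, on the other hand, yields the explicit quantitative lower bound $\Re H\ge\frac{3}{4}n$, which it later exploits in Section~\ref{sec:movement} (see \eqref{eq:Fboundslow}); your convex-combination argument gives positivity but not an immediate uniform lower bound, so if one needed that constant downstream one would have to revisit the estimate. Your treatment of existence/continuation and the barrier step is terser than the paper's (which spells out the inductive extension in Claim~\ref{clm:posextend} and invokes Appendix~\ref{sec:poscond}), but the logic is the same.
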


\begin{proof} To use Appendix~\ref{sec:IFT}, Claim~\ref{clm:IFT}, we put 
\begin{subequations} \label{eq:IFTsetup}
\begin{gather}
F(z, t) = P(z; \cvec(t)) - 1, \label{eq:fset}\\
\rho = \frac{1}{2} \min \bracepair*{\epsilon, \frac{\log 3}{2n}}, \label{eq:rhoset}\\
V = \bracepair{z \in \CC: \Re z \geq - \epsilon, \abs{z} \leq 1} , \label{eq:Vset}\\
J = [0, \tau] \label{eq:Jset}
\end{gather}
\end{subequations}
so that the lozenge-shaped neighborhood $J(\rho)$ defined as in $\eqref{eq:rhonbhddef}$ is a subset of the rectangle $J_{\rho} \in \eqref{eq:Jrhodef}$.

We first note the following estimate: If $\abs{z} \leq 2$, and $\abs{c_k(t)} \leq 2 \cvecmax$ for all $k \in \NN$, $k \leq n$, then
\begin{equation} \label{eq:mZerodef}
\abs{P(z; \cvec(t)} \leq 2^n (1 + \cvecmax)^n \equiv M_0
\end{equation}

The estimate is on an appropriate domain: for $z \in V$, $\abs{z} \leq 1$, so for $z \in V_{\rho}$ with $\rho < 1$, $\abs{z} < 2$.  For $t \in J_{\rho}$, Lemma~\ref{lem:extensions}, \eqref{eq:cextabsbounds}, ensures $\abs{c_k(t)} \leq 2 \cvecmax$ for all $k$.

We divide the next part of the proof into smaller claims.

\begin{bigclm} \label{clm:IFTLocal}
Fix $0 < \cvecmin \leq \cvecgeom \leq \cvecmax$, with $\cvecgeom < 1$. Fix $\mathbf{r} = \cvec(0) \in \cspace{\cvecgeom}{\cvecmin}{\cvecmax}$, define $\cvec(t)$ as in Section~\ref{sec:ctod}, fix $t_0 \in [0, \tau]$, let $\mathbf{s} = \cvec(t_0)$, and let $w \in V \in \eqref{eq:Vset}$ be a root of  
\begin{equation} \label{eq:sformeqn}
P(z; \mathbf{s}) = 1.
\end{equation}
Then there exists a unique continuous function $w(t): \metricball{t_0}{r} \to \CC$, analytic in the interior of $\metricball{t_0}{r}$ with range in $\metricball{t_0}{\kappa}$, where $\kappa$, $r$ depend only on $\cvecgeom$, $\cvecmax$, $\cvecmin$, and $\rho$, such that
\begin{subequations}
\begin{gather}
w(t_0) = w\\
P(w(t); \cvec(t)) = 1 \text{ for all } t \in \metricball{t_0}{r}.
\end{gather}
\end{subequations}
\end{bigclm}

\begin{proof}
To use Appendix~\ref{sec:IFT}, Claim~\ref{clm:IFT} on $F(z; t) \in \eqref{eq:fset}$, we find appropriate estimates for the inequalities \eqref{eq:mOnedef}, \eqref{eq:mTwodef}, \eqref{eq:genOmegadef}.  Note that by $\epsilon + \rho \leq \frac{3}{2} \left( \frac{\cvecmin}{12} \right) < \frac{\cvecmin}{6}$, $V(\rho) \subseteq \extplane(\cvecmin/2)$, and as mentioned above, $t \in J_{\rho}$ implies by Lemma~\ref{lem:extensions} that $\Re c_k(t) \geq \frac{\cvecmin}{2}$ and $\abs{c_k(t)} \leq \sqrt{3} \cvecmax$ for all $k$.

For $M_1$, by $P(z; \cvec(t)) \in \eqref{eq:polyext}$, for all $t \in J_{\rho}$ and $z \in \extplane\left( \frac{\cvecmin}{2} \right)$, $\abs{z} \leq 1 + \rho < 2$,
\begin{equation} \label{eq:partialzunabs}
\begin{split}
\frac{\partial F}{\partial z} = \frac{\partial P}{\partial z} & = \sum_{j = 0}^q m_j (z + d_j(t))^{m_j - 1} \prod_{\substack{k = 1 \\ k \neq j}}^q (z + d_k(t))^{m_k}  = \left[ \sum_{j = 0}^q \frac{m_j}{z + d_j(t)} \right] P(z; \cvec(t))
\end{split}
\end{equation}
and thus by \eqref{eq:cextabsbounds}
\begin{equation}
\begin{split}
\abs{\frac{dP}{dz}} & \leq (q + 1) \left( \sum_{j = 0}^q m_j \right) \times (1 + \rho + \sqrt{3}\cvecmax)^{n - 1}\\
& \leq n^2  2^n( 1 + \cvecmax)^n = n^2 M_0.
\end{split}
\end{equation}
For $\frac{\partial F}{\partial t}$, and for all cases \ref{enum:lowersmaller} -- \ref{enum:qone}, we need only two terms:
\begin{equation} \label{eq:partialtunabs}
\begin{split}
\frac{\partial F}{\partial t} = \frac{\partial}{\partial t}P(z; \cvec(t))  & = \left[  \frac{m_0 \atder(t)}{z + d_0(t)} +  \frac{m_q \btder(t)}{z + d_q(t)} \right] P(z; \cvec(t))\\
& = m_0 m_q \left[ \frac{d_0(t)}{z + d_0(t)} - \frac{d_q(t)}{z + d_q(t)} \right] P(z; \cvec(t))\\
& = - m_0 m_q z (d_q(t) - d_0(t))\cdot \frac{P(z; \cvec(t))}{(z + d_0(t))(z + d_q(t))}.
\end{split} 
\end{equation}
 For $t \in J_{\rho}$, $0 < \rho < \frac{\log 3}{2n}$, we have by \eqref{eq:cextboundingbox} that $\Re c_j(t) > \frac{\cvecmin}{2}$, or $\Re d_j (t) > \frac{\cvecgeom}{2}$, so for all $j$, $0 \leq j \leq q$, and $z \in \extplane( \cvecmin/2 )$,
\begin{equation} \label{eq:resumboundbelow}
\Re(z + d_j) \geq \Re(z + \cvecmin) > - \frac{\cvecmin}{6} + \frac{\cvecmin}{2} = \frac{\cvecmin}{3},
\end{equation}
Using \eqref{eq:resumboundbelow},\eqref{eq:mZerodef}, and $\abs{z} < 2$ in the final line of \eqref{eq:partialtunabs},
\begin{equation} \label{eq:partialtabs}
\begin{split}
\abs{ \frac{\partial F}{\partial t}} & \leq \frac{36 n^2 \cdot M_0}{\cvecmin^2}.
\end{split} 
\end{equation}

Therefore,  we can choose
\begin{equation} \label{eq:mOnefinal}
M_1  =  n^2 M_0 + \frac{36 n^2 M_0}{\cvecmin^2} = n^2 M_0 \left( 1 + \frac{36}{\cvecmin^2}\right).
\end{equation}
As above, we can bound the second derivatives of $F(z; t) \in \eqref{eq:fset}$ and it suffices to choose
\begin{equation} \label{eq:mTwofinal}
M_2 = 216 n^3 (1 + \cvecmax) \left[ 1 + \frac{M_1}{\cvecmin} + \frac{M_0}{\cvecmin^2} \right]
\end{equation}

By \eqref{eq:partialzunabs}, we have that for any particular root $\specw \in \extplane(\cvecmin/2)$ of
\begin{equation} \label{eq:mainequationT}
P(z; \cvec(t)) = 1
\end{equation}
that
\begin{equation} \label{eq:partialsunabs}
\left. \frac{\partial P}{\partial z} \right\vert_{z = \specw} = \sum_{j = 0}^q \frac{m_j}{\specw + d_j(t)},
\end{equation}
so defining
\begin{equation} \label{eq:omegafinal}
\Omega = \frac{n \cvecmin}{6(1 + \cvecmax)^2} ,
\end{equation}
we have that 
\begin{equation} \label{eq:partialsabsbelow}
\Re \left( \left. \frac{dP}{dz} \right\vert_{z = \specw} \right)  = \sum_{j = 0}^q \Re \left(\frac{m_j}{w + d_j} \right) = \sum_{j = 0}^q \frac{m_j \Re (w + d_k)}{\abs{w + d_k}^2} > n \frac{\cvecmin/6}{(1 + \cvecmax)^2}  = \Omega.
\end{equation}

With $\cvec(0) = \mathbf{r}$ and $\cvec(t)$ defined in Section~\ref{sec:ctod}, and $t_0 \in [0, t] = J$, $\mathbf{s} = \cvec(t_0)$ choose $\specw \in V \subseteq \extplane(\cvecmin/2)$ among the roots of  \eqref{eq:sformeqn}.  We choose
\begin{subequations}
\begin{align}
2\kappa &= \min \bracepair*{\rho, \frac{\Omega}{8M_2}} \label{eq:kappafinal}\\
2r & = \min \bracepair*{\kappa \cdot \frac{\Omega}{8(M_1 + M_2)}, \rho} \label{eq:rfinal}. 
\end{align}
\end{subequations}
Then by the Implicit Function Theorem, i.e. by Claim~\ref{clm:IFT}, there exists a continuous function $w(t): \rdisk{r}(t) \to \CC$, analytic in the interior, with image contained on $\metricball{\specw}{k}$, such that
\begin{equation}
P(w(t), \cvec(t)) = 1, \quad w(0) = \specw,
\end{equation}
and with $F(z, t) \in \eqref{eq:fset}$,
\begin{equation}
\wtder(t) = - \, \left. \frac{\partial F}{\partial t} /\frac{\partial F}{\partial z} \right\vert_{z = w(t)}.
\end{equation}
\end{proof}

\begin{bigclm} \label{clm:posder}
In the setting of Claim~\ref{clm:IFTLocal}, whenever $t \in \metricball{t_0}{r}$, $t < \tau$, and $w(t)$ is in the set 
\begin{equation} \label{eq:walldef}
\wall \declare \bracepair{\xi \in \CC: \abs{\Re \xi} \leq \epsilon, \delta \leq \abs{\Im \xi} \leq 1}, \quad \delta \in \eqref{eq:deltadef}, \epsilon \in \eqref{eq:epsilondef},
\end{equation}
we have that
\begin{equation}
\Re \wtder(t) > 0
\end{equation}
\end{bigclm}

\begin{proof}
We now wish to demonstrate that if $t \in (t_0 - r, t_0 + r)$, $t < \tau$, and $\Re w(t) \leq \epsilon$, $\epsilon \in \eqref{eq:epsilondef}$, then $\wtder > 0$.  For real $t$ in this domain, by Lemma~\ref{lem:extensions}, $\cvec(t) \in \cspace{\cvecgeom}{\frac{\cvecmin}{\sqrt{3}}}{\sqrt{3} \cvecmax}$, so when invoking Corollaries~\ref{cor:zerocontainmentannulus}  and Corollary~\ref{cor:zerofreebox}, we will use $d(\cvecgeom, \sqrt{3} \cvecmax)$ and $\delta(\cvecgeom, \sqrt{3}\cvecmax)$.

Consider first the easier case \ref{enum:qone}, i.e., $q = 1$.  The sum \eqref{eq:partialsunabs} has only two terms so
\begin{equation}
\left.\frac{\partial F}{\partial z}\right\vert_{z = w(t)} = \frac{m_0}{w(t) + d_0(t)} + \frac{m_0}{w(t) + d_1(t)} = (m_0 + m_1) \frac{w(t) + \abaverage(t)}{(w(t) + d_0(t))(w(t) + d_1(t))},
\end{equation}
where
\begin{equation}
d_0(t) < \abaverage(t) < d_1(t), \quad \abaverage(t) \declare \frac{1}{m_0 + m_1} ( m_1 d_0(t) + m_0 d_1(t) )
\end{equation}
and since $P(w(t), t) = 1$, we have by \eqref{eq:partialtunabs} that
\begin{equation} \label{eq:partialtPEqualsOne}
\left. \frac{\partial F}{\partial t} \right\vert_{z = w(t)} = - m_0 m_1 \frac{z (d_1(t) - d_0(t))}{(z + d_0(t))(z + d_1(t))}
\end{equation}
Therefore, with $m_0 + m_1 = n$,
\begin{equation} \label{eq:wtderformqone}
\wtder(t) = \frac{m_0 m_1}{n} [ d_1(t) - d_0(t) ] \left( 1 + \frac{\abaverage(t)}{w(t)} \right)^{-1} .
\end{equation} If $w(t) = u(t) + i v(t)$, then
\begin{equation}
\Re \frac{1}{w(t)} = \frac{u(t)}{u(t)^2 + v(t)^2}
\end{equation}
and so 
\begin{equation}
\Re \frac{\abaverage(t)}{w(t)} = \abaverage(t) \cdot \frac{u(t)}{u(t)^2 + v(t)^2}.
\end{equation}
Since $\abs{u(t)} \leq \epsilon < \frac{1}{2} \cdot \frac{d(\cvecgeom, \sqrt{3} \cvecmax)^2}{\sqrt{3} \cvecmax}$, by Lemma~\ref{lem:zerofreedisk},
\begin{equation} \label{eq:absreduce}
\abs{\Re \frac{\abaverage(t)}{w(t)}} \leq \abaverage(t) \cdot \frac{\abs{u(t)}}{d^2} \leq \frac{\sqrt{3} \cvecmax}{d(\cvecgeom, \sqrt{3} \cvecmax)^2} \abs{u(t)} \leq \frac{1}{2}.
\end{equation} 

Then $\Re \left( 1 + \frac{\abaverage(t)}{w(t)} \right) \geq \frac{1}{2} > 0$ 
and by Remark~\ref{rem:inverserealsgn} 
\begin{equation} \label{eq:posderqone}
\Re \wtder(t) > 0 \text{ if } t < \tau.
\end{equation}

In the cases \ref{enum:lowersmaller} -- \ref{enum:luequal}, \eqref{eq:partialtunabs}, with the simplification $P(w(t), t) = 1$,  gives 
\begin{equation} \label{eq:partialtPEqualsOneAgain}
\left. \frac{\partial F}{\partial t} \right\vert_{z = w(t)} = - m_0 m_q \frac{z (d_q(t) - d_0(t))}{(z + d_0(t))(z + d_q(t))}.
\end{equation}
By \eqref{eq:partialsunabs}
\begin{equation} \label{eq:partialzqgen}
\left. \frac{\partial F}{\partial z} \right\vert_{z = w(t)} = \frac{m_0}{w(t) + d_0(t)} + \frac{m_q}{w(t) + d_q(t)} + \sum_{j = 1}^{q - 1} \frac{m_j}{w(t) + d_j},
\end{equation}
and the third term needs special attention, even with constant $d_j$ for $0 < j < q$. By \eqref{eq:partialzqgen} and \eqref{eq:partialtunabs},
\begin{equation} \label{eq:maindiffeqqgen}
\wtder(t) = - \left. \frac{\partial F}{\partial t}/ \frac{\partial F}{\partial z}\right\vert_{z = w(t)} = \frac{m_0 m_q(d_q(t) - d_0(t))}{H(w(t))}
\end{equation}
where
\begin{equation} \label{eq:Hdefqgen}
H(z) \declare  (m_0 + m_q)(1 + \frac{\abaverage(t)}{z}) + \sum_{j = 1}^{q-1} \frac{(z + d_0(t))(z + d_q(t))}{z(z + d_j)}, \quad \abaverage(t) = \frac{m_q d_0(t) + m_0 d_q(t)}{m_0 + m_q}.
\end{equation}
Notice that for $0 < a < c < b$,
\begin{equation} \label{eq:rhodef}
\rho \declare \frac{(z + a)(z + b)}{z(z + c)} = 1 + \frac{a + b -c}{z} + \frac{(c-a)(b-c)}{(-z)(z + c)},
\end{equation}
so 
\begin{equation} \label{eq:Hclarify}
\begin{split}
H(z) &= n + \frac{1}{z} \left( (m_0 + m_q) \abaverage(t) + \sum_{j = 1}^{q - 1} (d_0(t) + d_q(t) - d_j) \right) \\
& \quad + \sum_{j = 1}^{q - 1} \frac{(d_j - d_0(t))(d_q(t) - d_j)}{- z(z + d_j)} \\
& = n + T_2 + T_3.
\end{split}
\end{equation}
The second term $T_2$ in \eqref{eq:Hclarify} --- compare \eqref{eq:absreduce} --- 
\[
\abs{\Re T_2} \leq 2 n \cdot \sqrt{3} \cvecmax  \cdot\frac{\abs{u(t)}}{u(t)^2 + v(t)^2} \leq \frac{2 \sqrt{3} n \cvecmax}{d(\cvecgeom, \sqrt{3}\cvecmax)^2} \cdot \abs{u(t)} \leq \frac{1}{4} n
\]
by
\begin{equation}
\abs{u(t)} \leq \epsilon = \frac{\delta(\cvecgeom, \sqrt{3}\cvecmax)^2}{4(1 + \sqrt{3} \cvecmax)} <  \frac{d(\cvecgeom, \sqrt{3} \cvecmax)^2}{4 \sqrt{3} \cvecmax}
\end{equation}
For the estimates of the sum $T_3$ notice that, with $z = x + iy$, 
\begin{equation} \label{eq:proddetails}
z(z + c) = (x + iy)(x + c + iy) = x(x + c) - y^2 + iy(2x + c)
\end{equation}
and 
\begin{equation} \label{eq:denomdetailsqgen}
\Re \left[ - \, \frac{1}{z(z + c)} \right] = \frac{y^2 - x(x + c)}{(y^2 - x(x + c))^2 + y^2(2x + c)^2}.
\end{equation}
With $c = d_j$, $0 < j < q$, 
\begin{equation}
y^2 - x(x + c) \geq 0 
\end{equation}
if $\abs{y} \geq \delta(\cvecgeom, \sqrt{3} \cvecmax)$ by
\begin{equation}
\delta(\cvecgeom, \sqrt{3} \cvecmax)^2 \geq \epsilon ( \epsilon + 2 \cvecmax), \quad \abs{x} \leq \epsilon.
\end{equation}

Therefore, for $z \in \eqref{eq:walldef}$, $\Re T_3 > 0$; moreover, since $t$ real and in $[-\rho, \tau]$ implies $\cvec(t) \in \cspace{\cvecgeom}{\frac{\cvecmin}{\sqrt{3}}}{\sqrt{3} \cvecmax}$, so by Corollary~\ref{cor:zerofreebox}, $\abs{w(t)} \geq \delta(\cvecgeom, \sqrt{3} \cvecmax)$.   Together with \eqref{eq:denomdetailsqgen} and \eqref{eq:Hclarify}, this implies that
\begin{equation} \label{eq:Hbounds}
\Re H(z) \geq \frac{3}{4} n > 0, \quad z \in \wall.
\end{equation}
and by \eqref{eq:proddetails}, \eqref{eq:rhodef}, and \eqref{eq:maindiffeqqgen}, $\Re \wtder(t) > 0$ if the trajectory $w(t)$ is in the $\wall$, so the root $w(t)$ cannot leave the $\ourbox$ by crossing the $\wall$ to the left 
\begin{figure}
\begin{tikzpicture}[scale = 3.0]
\draw [<->](-1.1, 0) -- (1.1, 0) node[right] {$x = \Re z$};
\draw [<->] (0, -1.1) -- (0, 1.2) node[above] {$y = \Im z $}; 
\draw [very thick] (0, -1) -- (1, -1) -- (1, 1) -- (0, 1) -- (0, 1/8) -- (1/8, 1/8) -- (1/8, -1/8) -- (0, -1/8) -- (0, -1);
\node at (1.2, 1.2) {$\ourbox$};
\draw [->] (1.1, 1.1) -- (0.9, 0.9);
\filldraw[fill = red] (-1/8, 1) -- (1/8, 1) -- (1/8, 1/8) -- (-1/8, 1/8) -- cycle;
\node at (-0.25, 0.5) {$\wall$ $\rightarrow$};
\filldraw[fill = red] (-1/8, -1) -- (1/8, -1) -- (1/8, -1/8) -- (-1/8, -1/8) -- cycle;
\node at (-0.25, -0.5) {$\wall$ $\rightarrow$};
\end{tikzpicture}
\label{fig:argument}
\caption{The $\wall$ of \eqref{eq:walldef} and the $\ourbox$ of \eqref{eq:ourboxdef}}
\end{figure}
(see Figure 2).  
\end{proof}

\begin{bigclm} \label{clm:posextend}
In the setting of Claim~\ref{clm:IFTLocal}, suppose that $t_0 < \tau$ and $\Re \specw \geq 0$.  Then $w(t)$, restricted to $[t_0, t_0 + r]$, extends uniquely to a function on $[t_0, \tau]$ such that 
\begin{equation}
\Re w(t) > 0 \text{ for all } t \in (t_0, \tau].
\end{equation}
\end{bigclm}
\begin{proof}
By Corollary~\ref{cor:zerocontainmentannulus}, any root of $w(t)$, $t$ real, with $\abs{\Re w(t)} < \epsilon$ is in the $\wall$.  By Claim~\ref{clm:posder}, we have that $\Re \wtder(t) > 0$ if $w(t)$ is in the $\wall$ and $t < \tau$.  Thus, whenever $\abs{\Re w(t)} < \epsilon$ and $t < \tau$, $\Re \wtder(t) > 0$.  
\begin{enumerate}[label = \textbf{Case \arabic*.}]
\item If $r \geq \tau - t_0$, for each $\eta > 0$, we may apply Claim~\ref{clm:posFirstForm} with $h(t) = \Re w(t)$, $[a, b] = [t_0, t_0 + \tau - \eta]$, $\smallterm = \epsilon$, so that $\Re w(t) > 0$ for all $t \in (t_0, t_0 + \tau - \eta]$.  Thus, $\Re w(t) > 0$ for all $t \in (t_0, \tau)$.  In addition, $\Re w(\tau) > 0$: if for some interval $(\tau - \eta, \tau)$, $\Re w(t) < \epsilon$ for $t \in (\tau - \eta, \tau)$, then $\Re \wtder(t) > 0$ for $t \in (\tau - \eta, \tau)$, so $\Re w(\tau) > \Re w\left(\tau - \frac{\eta}{2} \right) > 0$.  Otherwise, for all $\eta > 0$, there exists $t \in (\tau - \eta, \tau)$ with $\Re w(t) \geq \epsilon$, so  there exists an increasing sequence $\bracepair{t_j}_{j = 1}^{\infty}$ in $(t_0, \tau)$ with $\Re w(t_j) \geq \epsilon$ for all $j \geq 1$, and
\[
\Re w(\tau) = \lim_{t \nearrow \tau} \Re w(t) = \lim_{j \to \infty} \Re w(t_j) \geq \epsilon.
\]
In all cases, $\Re w(t) > 0$ on $(t_0, \tau]$. 
\item If $r < \tau - t_0$, define
\[
K = \inf \bracepair{k \in \NN: \tau - t_0 \leq k \cdot \frac{r}{2}};
\]
$K \geq 3$ by $r  = \frac{2r}{2} < \tau - t_0$.  Then let 
\begin{equation}
t_k = t_0 + k\frac{r}{2}, \quad 0 \leq k \leq K - 1,
\end{equation} 
and we inductively define $w(t)$ on $\bigcup_{k = 0}^{K - 2} \metricball{t_k}{r}$ as follows. Put $w(t) = w_0(t)$ on $\metricball{t_0}{r}$ as in Claim~\ref{clm:IFTLocal}.  We may apply Claim~\ref{clm:posFirstForm} with $h(t) = \Re w_0(t)$, $[a, b] = [t_0, t_1]$, $\smallterm = \epsilon$, to ensure $\Re w_0(t) > 0$ on $\left(t_0, t_1\right]$.

Suppose that we have defined $w(t)$ on $\bigcup_{k = 0}^j \metricball{t_k}{r}$, $j \leq K - 3$, and ensured that $\Re w(t) > 0$ on $(t_0, t_{j + 1}]$; we now show how to extend the definition to $\bigcup_{k = 0}^{j + 1} \metricball{t_k}{r}$ and ensure positive real part on $(t_0, t_{j + 2}]$.  Since $\Re w(t_{j + 1}) > 0$ by hypothesis, we may define $w_{j + 1}(t)$ on $\metricball{t_{j + 1}}{r}$ by Claim~\ref{clm:IFTLocal}, the unique function such that $w_{j + 1}(t_{j + 1}) = w(t_{j + 1})$ and $P(w_{j + 1}(t), \cvec(t)) = 1$ for all $t \in \metricball{t_{j + 1}}{r}$.  We have $w(t_{j + 1}) = w_{j + 1}(t_{j + 1}) \in \metricball{t_j}{r} \cap \metricball{t_{j + 1}}{r}$, so by the uniqueness statement for $w_{j + 1}$, $w_{j + 1}(t) = w_j(t)$ for all $t$ in $\metricball{t_j}{r} \cap \metricball{t_{j + 1}}{r}$. We extend the definition of $w(t)$ by 
\[
w(t) = \begin{cases} w_{\text{old}}(t), & t \in \bigcup_{k = 0}^j \metricball{t_k}{r}\\
w_{j + 1}(t),& t \in \metricball{t_{j + 1}}{r} \end{cases},
\]
which is a valid definition by the equality on the overlap.  Moreover, $t_{j + 2} \in \overset{\circ}{\mathbb{D}_{r}}(t_{j + 1})$, and $j \leq K - 3$, so $j + 2 \leq K - 1$, so $t_{j + 2} \leq t_{K - 1} < \tau$ by definition of $K$, and so we may apply Claim~\ref{clm:posFirstForm} to $h(t) = \Re w(t)$, $[a, b] = [t_0, t_{j + 2}]$, $\smallterm = \epsilon$ to demonstrate that $\Re w(t) > 0$ on $(t_0, t_{j + 2}]$.

By induction, we have defined $w(t)$ on $\bigcup_{k = 0}^{K - 2} \metricball{t_k}{r}$, such that $\Re w(t) > 0$ on $(t_0, t_{K - 1}]$.  As in our induction argument, we may expand the definition of $w(t)$ to include $\metricball{t_{K - 1}}{r}$, but now $\tau - t_{K - 1} \leq t_0 + K(r/2) - [t_0 + (K - 1)(r/2)] = r/2 < r$, so as in Case 1, we may argue that $\Re w(t) > 0$ on $[t_{k - 1}, \tau]$, hence on $(t_0, \tau]$, in this last step.  
\end{enumerate}
\end{proof}
\begin{smallrem} \label{rem:smallLeft}
In Claim~\ref{clm:posextend} , we could replace ``$\Re \specw \geq 0$'' by ``$\Re \specw \geq - \epsilon$'' for the starting point and ``$\Re w(t) > 0$'' by ``$\Re w(t) > - \epsilon$'' for $t > t_0$ -- for in the invocations of Claim~\ref{clm:posFirstForm}, we could have taken $h(t) = \Re w(t) + \epsilon$ and $\smallterm = 2 \epsilon$, since Claim~\ref{clm:posder} ensures that $\wtder(t) > 0$, hence $h^{\prime}(t) > 0$, if $\abs{\Re w(t)} \leq \epsilon$, i.e. $0 \leq h(t) \leq 2 \epsilon$.  Thus, we can start a little to the left of the imaginary axis and have a path on $[t_0, \tau]$.  
\end{smallrem}

\noindent \textit{Completion of the proof of Proposition~\ref{prop:omnibus}.}  The primary step remaining is to show an appropriate choice of $\eta$ such that I can extend to all $t \in J_{\eta}$.  By Claim~\ref{clm:posextend}, if $\Re \specw \geq 0$, we have a nice function $w(t)$ on $[0, \tau]$ with $w(0) = \specw$ and $\Re w(t) > 0$ for $t \in (0, \tau]$.  At each point $t \in [0, \tau]$, we have an $r$-radius ball where the function $w(t)$ can be extended, and the uniqueness from the Implicit Function Theorem ensures that these extensions are consistent.  Therefore, $w(t)$ exists on $J(r)$ according to the model of \eqref{eq:rhonbhddef}.  Setting $\eta = \frac{2r}{3}$, $J_{\eta} \in \eqref{eq:rhonbhddef}$ is a subset of $J(r) \in \eqref{eq:Jrhodef}$, by the same reasoning as in the proof of Corollary~\ref{cor:zerofreebox}.

Since $\eta = \frac{2r}{3} \leq \frac{\rho}{3} < \frac{\log 3}{2n}$, Lemma~\ref{lem:extensions} ensures that for $t \in [-\eta, \tau + \eta]$, $\cvec(t) \in \cspace{\cvecgeom}{\frac{\cvecmin}{2}}{2\cvecmax}$, and hence by Corollary~\ref{cor:zerocontainmentannulus} all nonnegative roots are in $\ann{d(\cvecgeom, 2\cvecmax)}{1} \subset \ann{d(\cvecgeom, \cvecmax) /2}{1}$.  The result on the sign of the derivative follows from Claim~\ref{clm:posder}.
\end{proof}

\section{Movement of the Roots} \label{sec:movement}

\begin{bigclm} \label{clm:firstmonotonicity}
$\countproots{\cvec(t)}$ and $\countpzroots{\cvec(t)}$ are nondecreasing on $[0, \tau]$
\end{bigclm}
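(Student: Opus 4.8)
The plan is to fix an arbitrary starting time $t_0\in[0,\tau)$, track the roots of $P(z;\cvec(t_0))=1$ that already lie in the closed right half-plane $E$, and show each of them can be followed by an analytic path that stays in the open right half-plane $E^+$ for all $t\in(t_0,\tau]$; counting those paths at a later time $t_1$ then yields both inequalities simultaneously. Throughout we may assume $0<\cvecgeom<1$, since for $\cvecgeom\ge 1$ the path $\cvec(t)$ of Section~\ref{sec:ctod} is not even defined.

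First I would record the structural facts. Since $\cvec(t)\in\cspace{\cvecgeom}{\cvecmin}{\cvecmax}$ for all $t\in[0,\tau]$ by Lemma~\ref{lem:extensions}, Lemma~\ref{lem:simplezeros} makes every root of $P(z;\cvec(t))=1$ lying in $\extplane(\cvecmin)\supseteq E$ \emph{simple}, while Lemma~\ref{lem:basicUB} confines such a root to $\bracepair{\abs{z}<1}$, hence to the set $V$ of \eqref{eq:Vset}. Consequently, at time $t_0$ there are exactly $k\declare\countpzroots{\cvec(t_0)}$ \emph{distinct} roots $w_1,\dots,w_k\in E\cap V$ of $P(z;\cvec(t_0))=1$, of which $\countproots{\cvec(t_0)}$ lie in $E^+$. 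Then I would apply Claim~\ref{clm:posextend} with starting time $t_0$ — legitimate, since $t_0<\tau$ and $\Re w_i\ge 0$ — to each $w_i$: this gives an analytic function $w_i(t)$ on $[t_0,\tau]$ with $w_i(t_0)=w_i$, $P(w_i(t),\cvec(t))=1$, and $\Re w_i(t)>0$ for every $t\in(t_0,\tau]$. So all $k$ branches lie in $E^+$ throughout $(t_0,\tau]$.

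The step I expect to need the most care is verifying these branches remain pairwise distinct. If $w_i(t')=w_j(t')$ for some $i\ne j$ and $t'\in(t_0,\tau]$, then $w_i(t')\in E^+\subseteq\extplane(\cvecmin)$ is a simple root of $P(z;\cvec(t'))=1$, so the implicit function theorem (Claim~\ref{clm:IFTLocal}) supplies a \emph{unique} analytic solution branch through it; as $w_i$ and $w_j$ are both such branches agreeing at $t'$, they coincide near $t'$, hence — being restrictions of analytic functions on a connected neighborhood of $[t_0,\tau]$ — on all of $[t_0,\tau]$, contradicting $w_i(t_0)\ne w_j(t_0)$. Granting distinctness, for each $t_1\in(t_0,\tau]$ the equation $P(z;\cvec(t_1))=1$ has at least the $k$ distinct roots $w_1(t_1),\dots,w_k(t_1)$ in $E^+$, so
\[
\countproots{\cvec(t_1)} \geq k = \countpzroots{\cvec(t_0)} \geq \countproots{\cvec(t_0)};
\]
since $\countpzroots{\cvec(t)}\ge\countproots{\cvec(t)}$ always, this also gives $\countpzroots{\cvec(t_1)}\ge\countpzroots{\cvec(t_0)}$. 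As $t_0\in[0,\tau)$ is arbitrary and the case $t_1=t_0$ is trivial, both functions are nondecreasing on $[0,\tau]$.

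The genuine content is already packaged in Claim~\ref{clm:posextend} — a right-half-plane root cannot escape leftward — which in turn rests on the sign computation $\Re\wtder(t)>0$ on the $\wall$. Given that, the main (if minor) obstacle is the distinctness argument above, and it is precisely there that simplicity of the tracked roots (Lemma~\ref{lem:simplezeros}) is indispensable: without it two branches could merge and the count could drop. It is also worth noting that roots entering $E$ from $E^-$ as $t$ increases only help the count, so they need not be tracked.
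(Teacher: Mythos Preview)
Your proposal is correct and follows essentially the same approach as the paper: track each right-half-plane root via Claim~\ref{clm:posextend}, use simplicity (Lemma~\ref{lem:simplezeros}) to rule out coalescence, and note that incoming roots from $E^-$ only help. The only cosmetic difference is that the paper first proves the inequality for $t_0=0$ and then handles general $t_0<t_1$ by explicitly re-parameterizing the path (showing the new $\tau$ is $\tau-t_0$), whereas you invoke Claim~\ref{clm:posextend} directly at the arbitrary $t_0$---which is legitimate since that claim is already stated for any $t_0\in[0,\tau)$; your distinctness argument via uniqueness in the implicit function theorem is also more explicit than the paper's one-line appeal to simplicity.
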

\begin{proof}
First, we prove the inequality on $[0, \tau]$,
\begin{equation} \label{eq:partialineqbelow}
\countproots{\cvec(0)} \leq \countproots{\cvec(t)}, \quad \countproots{\cvec(0)} \leq \countproots{\cvec(t)}
\end{equation}

So far, we have talked about the trajectory $w(t)$ of \emph{one} root $w(0) = \specw$.  All roots in the $\ourbox$ are simple by Lemma~\ref{lem:simplezeros} so at no instant $t$ do two of the $\countproots{\cvec(0)}$ trajectories  with the initial $\countproots{\cvec(0)}$ root-points could coalesce; yet they remain in the $\ourbox$ and $E^+$ (or $\overline{E}$).  New roots could come from the left, i.e, from $E^- = \bracepair{\xi \in \CC: \Re \xi < 0}$, but this only pushes up the number $\countproots{\cvec(t)}$ in the right half-plane so $\countproots{(c(0)} \leq \countproots{\cvec(t)}$, $0 \leq t \leq \tau$.  The same can be said about roots in $\overline{E}$ so $\countpzroots{\cvec(0)} \leq \countpzroots{\cvec(t)}$.

We will get the full claim if we show 
\begin{equation} \label{eq:above}
\countproots{\cvec(t^{\prime})} \leq \countproots{\cvec(t)} \text{ and } \countpzroots{\cvec(t^{\prime})} \leq \countpzroots{\cvec(t)}, \quad 0 \leq t^{\prime} <  t \leq \tau.
\end{equation}
Without changing the structure or \diver\ $q$ of the multiset \eqref{eq:dlist} -- \eqref{eq:mlist} let us only change $d_0$ to $\widetilde{d_0} = d_0 \exp (m_q t^{\prime})$ and $d_q$ to $\widetilde{d_q} = d_q \exp( - m_0 t^{\prime})$.  With $t^{\prime} < \tau$ the inequalities 
\begin{equation}
\begin{split}
\widetilde{d_0} < d_1 \, \text{ and } \, d_{ q - 1} < \widetilde{d_q} &\quad \text{in Cases \ref{enum:lowersmaller} -- \ref{enum:luequal}},\\
\text{ or } \widetilde{d_0} < \widetilde{d_1} &\quad \text{in Case \ref{enum:qone}},
\end{split}
\end{equation} 
will be preserved.  If we proceed by the scheme of Section~\ref{sec:ctod} with the initial multiset 
\[
\widetilde{d_0} < d_1 < \dotsb < d_{q-1} < \widetilde{d_q} \quad (or \widetilde{d_0} < \widetilde{d_1}),
\]
and the old multiplicities $\bracepair{m_j}_{j = 0}^q$, recalculation of $\tau$ leads to 
\[
\begin{split}
\widetilde{\tau_*} = \frac{1}{m_q} \log \left( \frac{d_1}{d_0 \exp(m_q t^{\prime})} \right) = \tau_* - t^{\prime},\\
\widetilde{\tau^*} = \frac{1}{m_q} \log \left( \frac{d_q \exp(-m_0 t^{\prime})}{d_{q - 1}}  \right) = \tau_* - t^{\prime},\\
\text{ and } \widetilde{\tau} = \tau - t^{\prime}, 
\end{split}
\]
in Cases \ref{enum:lowersmaller} -- \ref{enum:luequal}, or
\[
\widetilde{\tau} = \frac{1}{m_0 + m_1} \log \left( \frac{d_1 \exp(-m_0 t^{\prime})}{d_0 \exp(m_1 t^{\prime})} \right) = \tau - t^{\prime}
\]
in Case \ref{enum:qone}. 

Now we can apply the work of Section~\ref{sec:method}, with the understanding that $[t^{\prime}, \tau]$ is shifted by $t^{\prime}$ to $[0, \widetilde{\tau}]$, and \eqref{eq:partialineqbelow} becomes the inequalities \eqref{eq:above}.
\end{proof}

In Section~\ref{sec:ctod} we made one step $\bracepair{\cvec(t) \in \RR^n, \quad 0 \leq t \leq \tau}$, or $\bracepair{d(t) \in \RR^{q + 1}; 0 \leq t \leq \tau}$, to bring the \diver\  $q$ down by $1$ or $2$, with numbers of zeroes $\countproots{\cvec}$, $\countpzroots{\cvec}$ of $P(z; \cvec) - 1$ in $E^+$ and $E$ not decreasing.

We can repeat the same construction (many times, but at most $q$ times) if $q^{\prime} > 0$ still, treating the end--multiset of the previous set as the initial sequence~\eqref{eq:cbounds}, or multiset \eqref{eq:dlist} for the next step.  In this way, we get the intervals $[\tau_i, \tau_{i + 1}]$, $\tau_0 = 0$, $\Delta_i = \tau_{i + 1} - \tau_i > 0$,  $i = 0, 1, \dotsc, p-1$; $p \leq q$, and the following holds.
\begin{prop} \label{prop:finale}
Fix $\cvec_0 \in (\rplus)^n$, with geometric mean $\cvecgeom$.  There exists a continuous, piecewise--$C^{\infty}$ function
\[
\cvec(t): [0, T] \to (\rplus)^n, \quad T = \sum_{i = 0}^{p-1} \Delta_i, \, \, p \leq q, \, \, \Delta_i > 0,
\]
such that
\begin{enumerate}[label = (\alph*)]
\item $\cvec(0) = \cvec_0 \in \eqref{eq:cbounds}$
\item $\cvec(T) = \cvecideal = (\cvecgeom, \cvecgeom, \dotsc, \cvecgeom)$.
\item \label{enum:everincr}$\countproots{\cvec(t)}$ and $\countpzroots{\cvec(t)}$ are non-decreasing functions on $[0, T]$, i.e., for all $t$, $t^{\prime}$, $0 \leq t^{\prime} < t \leq T$,
\[
\countproots{\cvec(t^{\prime})} \leq \countproots{\cvec(t)} \text{ and } \countpzroots{\cvec(t^{\prime})} \leq \countpzroots{\cvec(t)}.
\]
\end{enumerate}
\end{prop}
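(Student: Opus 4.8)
The plan is to build $\cvec(t)$ by concatenating the elementary diversity‑reducing steps of Section~\ref{sec:ctod} and controlling the two root counts on each step by Claim~\ref{clm:firstmonotonicity}. First I would dispose of the easy cases. If $\cvec_0 = \cvecideal$ take $T = 0$. If $\cvecgeom \ge 1$, then by \eqref{eq:toobig}--\eqref{eq:toobigparttwo} we have $\countproots{\cvec} = \countpzroots{\cvec} = 0$ for every $\cvec$ with geometric mean $\cvecgeom$ when $\cvecgeom > 1$, and $\countproots{\cvec} = 0$, $\countpzroots{\cvec} = 1$ for every such $\cvec$ when $\cvecgeom = 1$; in either case the counting functions are constant along \emph{any} geometric‑mean‑preserving path, so one may simply take the log‑linear path $c_k(t) = c_k^{(0)} \bigl(\cvecgeom/c_k^{(0)}\bigr)^{t}$, $t \in [0,1]$, which is $C^{\infty}$, stays in $(\rplus)^n$, preserves $\cvecgeom$, and joins $\cvec_0$ to $\cvecideal$; then (c) is trivial. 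Hence we may assume $0 < \cvecgeom < 1$ and $\cvec_0 \neq \cvecideal$.

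Now put $\cvecmin = \min_k c_k^{(0)}$ and $\cvecmax = \max_k c_k^{(0)}$, so that $\cvec_0 \in \cspace{\cvecgeom}{\cvecmin}{\cvecmax}$, arranged as in \eqref{eq:cbounds}, and automatically $0 < \cvecmin \le \cvecgeom \le \cvecmax$ since $\cvecgeom$ is the geometric mean. Apply Proposition~\ref{prop:varypropC} to $\cvec_0$: it yields $\tau_1 > 0$ and a $C^{\infty}$ map $\cvec\colon[0,\tau_1] \to \cspace{\cvecgeom}{\cvecmin}{\cvecmax}$ with $\cvec(0) = \cvec_0$ whose endpoint $\cvec(\tau_1)$ has diversity $q_1 \le q - 1$ and still geometric mean $\cvecgeom$. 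Since along this step $d_0(t)$ is nondecreasing and $d_q(t)$ is nonincreasing while the intermediate $d_j(t)$ stay fixed (see \eqref{eq:dkdefhgen}, \eqref{eq:dpathqone}), the endpoint has maximal coordinate $\le \cvecmax$ and minimal coordinate $\ge \cvecmin$, so $\cvec(\tau_1) \in \cspace{\cvecgeom}{\cvecmin}{\cvecmax}$ as well. Claim~\ref{clm:firstmonotonicity} then gives that $\countproots{\cvec(t)}$ and $\countpzroots{\cvec(t)}$ are nondecreasing on $[0,\tau_1]$.

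Next I iterate: as long as the current endpoint has diversity $q_i > 0$, I feed it back into Proposition~\ref{prop:varypropC} as the new initial multiset — this is legitimate because it lies in the \emph{same} family $\cspace{\cvecgeom}{\cvecmin}{\cvecmax}$, so all the uniform estimates of Sections~\ref{sec:tech}--\ref{sec:method} apply with the same constants — obtaining an interval $[\tau_i,\tau_{i+1}]$ over which the diversity strictly drops and, by Claim~\ref{clm:firstmonotonicity}, both counting functions are nondecreasing. The diversity forms a strictly decreasing sequence of nonnegative integers starting at $q \le n-1$, so after $p \le q$ steps it reaches $0$; since the geometric mean is preserved throughout, the unique multiset in $\dspace{\cvecgeom}{\cvecmin}{\cvecmax}$ of diversity $0$ is $\cvecideal = (\cvecgeom,\dotsc,\cvecgeom)$. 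Setting $\tau_0 = 0$, $\Delta_i = \tau_{i+1}-\tau_i > 0$, $T = \sum_{i=0}^{p-1}\Delta_i$, and gluing the pieces produces $\cvec\colon[0,T]\to(\rplus)^n$ which is continuous (consecutive pieces agree at the common endpoint $\tau_i$) and piecewise‑$C^{\infty}$, with $\cvec(0) = \cvec_0$ and $\cvec(T) = \cvecideal$; this gives (a) and (b). For (c), given $0 \le t' < t \le T$ with $t'$ in the $i$-th subinterval and $t$ in the $j$-th ($i \le j$), I chain the monotonicity on $[t',\tau_{i+1}]$, on each full interval $[\tau_k,\tau_{k+1}]$ with $i < k < j$, and on $[\tau_j,t]$ to get $\countproots{\cvec(t')} \le \countproots{\cvec(t)}$, and likewise for $\countpzroots{}$.

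The one point needing care — and in this sense the main obstacle — is verifying that the whole iteration stays inside a \emph{single} fixed family $\cspace{\cvecgeom}{\cvecmin}{\cvecmax}$, so that Lemma~\ref{lem:zerofreedisk}, Corollary~\ref{cor:zerofreebox}, Lemma~\ref{lem:extensions}, and the derivative‑sign computations behind Claim~\ref{clm:firstmonotonicity} hold with the \emph{same} constants at every step; this is exactly what the structure of the elementary step provides, since it moves only the two extreme coordinates and moves them inward, so neither $\max_k c_k$ nor $\min_k c_k$ can leave $[\cvecmin,\cvecmax]$. The remaining ingredients — finiteness of the number of steps (a strictly decreasing nonnegative integer diversity), continuity at the glue points, and transitivity of ``nondecreasing'' across finitely many pieces — are routine.
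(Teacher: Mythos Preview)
Your proposal is correct and follows essentially the same approach as the paper: iterate the elementary diversity-reducing step of Proposition~\ref{prop:varypropC}, invoke Claim~\ref{clm:firstmonotonicity} on each piece, and concatenate, with termination guaranteed because the diversity strictly decreases. Your write-up is in fact more explicit than the paper's (which simply points back to Section~\ref{sec:method}); in particular, your observation that every iterate remains in the \emph{same} family $\cspace{\cvecgeom}{\cvecmin}{\cvecmax}$---because only the extreme coordinates move, and they move inward---is exactly the point that makes the uniform estimates of Sections~\ref{sec:tech}--\ref{sec:method} reusable at each step, and the paper leaves this implicit.
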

\begin{proof}
We explained these claims in Section~\ref{sec:method}. 
\end{proof}
The $t = 0$, $t^{\prime} = T$ case of \ref{enum:everincr} is precisely Theorem~\ref{thm:maincount}.

We now describe more precisely the movement of the roots.  Define for $t \in [0, T]$

\begin{equation} \label{eq:omegatdef}
\begin{split}
\countnrootstime{t} & = \countnroots{\cvec(t)}\\
\countzrootstime{t} & = \countzroots{\cvec(t)}\\
\countprootstime{t} & = \countproots{\cvec(t)}\\
\countpzrootstime{t} & = \countpzroots{\cvec(t)}
\end{split}
\end{equation}
\begin{bigclm} \label{clm:countjumps}
The counting function $\countproots{t}$ has a point of discontinuity at $t = t^*$ if and only if
\begin{equation} \label{eq:mainLateRevise}
P(z; c(t^*)) - 1  = 0
\end{equation}
has roots on $i\RR$. 
\end{bigclm}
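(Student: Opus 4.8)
The plan is to deduce both implications from the continuity of the zeros of $z \mapsto P(z;\cvec(t)) - 1$ as functions of $t$, together with the a priori localization of Section~\ref{sec:tech} and the derivative sign computed in Section~\ref{sec:method}. Throughout, $\countprootstime{t} = \countproots{\cvec(t)}$, $\cvec(t)$ is continuous and piecewise $C^{\infty}$ on $[0,T]$, $\cvecgeom < 1$, and along the whole path $\cvec(t)$ stays in a fixed family $\cspace{\cvecgeom}{\cvecmin}{\cvecmax}$ (with $\cvecmin,\cvecmax$ the bounds of $\cvec_{0}$, since the construction only moves the extreme coordinates inward); hence Lemmas~\ref{lem:basicUB}, \ref{lem:zerofreedisk}, \ref{lem:simplezeros} and \ref{lem:oddcount} apply uniformly. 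In particular, for every $t$ each zero of $P(z;\cvec(t)) - 1$ with $\Re z \geq 0$ is simple and lies in $\ann{d}{1}$, and at most one conjugate pair of zeros lies on $i\RR$.

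\emph{If no zero lies on $i\RR$ at $t^{*}$, then $\countprootstime{t}$ is continuous there.} Let $G = \bracepair*{z \in \CC : \Re z > 0,\ \abs{z} < 3/2}$. The coefficients of $z \mapsto P(z;\cvec(t)) - 1$ depend continuously on $t$, hence so do its zeros; by Lemma~\ref{lem:basicUB} no zero ever lies on the arc $\abs{z} = 3/2$, $\Re z \geq 0$, and by hypothesis none lies on the imaginary segment $\partial G \cap i\RR$ at $t = t^{*}$. Thus $\partial G$ carries no zero at $t^{*}$, hence none for $t$ in a neighbourhood of $t^{*}$, so the number of zeros of $P(\cdot;\cvec(t)) - 1$ inside $G$ is constant there (argument principle on $\partial G$, or Rouch\'e). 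Since $G \subseteq E^{+}$ while every zero in $\overline{E^{+}}$ has modulus $< 1 < 3/2$ (Lemma~\ref{lem:basicUB}), that number is exactly $\countprootstime{t}$. Contrapositively, a discontinuity of $\countprootstime{t}$ at $t^{*}$ forces a zero of $P(z;\cvec(t^{*})) - 1$ on $i\RR$. The same reasoning gives continuity of $\countpzrootstime{t}$ at such $t^{*}$, since there no zero lies on $i\RR$ for $t$ near $t^{*}$.

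\emph{If a zero lies on $i\RR$ at $t^{*}$, then $\countprootstime{t}$ is discontinuous there.} Say $P(iy^{*};\cvec(t^{*})) = 1$, $y^{*} \in \RR$. As $P(0;\cvec(t^{*})) = \prod_{k} c_{k}(t^{*}) = \cvecgeom^{n} < 1$, we get $y^{*} \neq 0$; taking conjugates if necessary, assume $y^{*} > 0$. Then $\abs{P(iy^{*};\cvec(t^{*}))} = 1$, so by Lemma~\ref{lem:oddcount} $y^{*}$ is the unique positive solution of $\abs{P(iy;\cvec(t^{*}))} = 1$ and $\pm iy^{*}$ are the only zeros of $P(z;\cvec(t^{*})) - 1$ on $i\RR$; moreover $d < y^{*} < 1$ by Lemmas~\ref{lem:zerofreedisk} and \ref{lem:basicUB}, so (as $\delta = \frac{2}{3} d$) $\delta < y^{*} < 1$ and $\pm iy^{*} \in \wall$. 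Since $\pm iy^{*} \in \extplane$, they are simple zeros (Lemma~\ref{lem:simplezeros}), so the Implicit Function Theorem (Claim~\ref{clm:IFTLocal}) produces an analytic branch $w(t)$ near $t^{*}$ with $w(t^{*}) = iy^{*}$, and $\overline{w(t)}$ is the branch through $-iy^{*}$. By Claim~\ref{clm:posder}, $\Re \wtder(t^{*}) > 0$: if $t^{*}$ is interior to the relevant subinterval this is immediate; if $t^{*} = \tau_{i}$ is a junction, where the hypothesis $t < \tau$ fails, pass to $[\tau_{i},\tau_{i+1}]$, where Claim~\ref{clm:posextend} and Remark~\ref{rem:smallLeft} give $\Re w(t) > 0$ for $t \in (t^{*},\tau_{i+1}]$, while positivity of $\Re \wtder$ wherever $\abs{\Re w} \leq \epsilon$ forces $\Re w(t) < 0$ just below $t^{*}$ on the preceding subinterval. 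Hence $\Re w(t)$ passes from negative to positive as $t$ crosses $t^{*}$, and likewise $\Re \overline{w(t)}$. Every other zero is at $t^{*}$ strictly inside $E^{+}$ or strictly inside $E^{-}$, hence stays on that side for $t$ near $t^{*}$ by continuity. Therefore the only change in $\countprootstime{t}$ near $t^{*}$ is that the pair $\pm iy^{*}$ enters $E^{+}$ as $t$ increases past $t^{*}$; so $\countprootstime{t}$ jumps by exactly $2$ and is discontinuous at $t^{*}$.

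\emph{Main obstacle.} The substantive implication is the second one. One must know the crossing of $i\RR$ by $iy^{*}$ is transversal, so that the count actually changes (a tangency would not), and one must exclude compensating crossings elsewhere on $i\RR$. Transversality is precisely $\Re \wtder > 0$ on the $\wall$ (Claim~\ref{clm:posder}), and the absence of compensating crossings is the uniqueness in Lemma~\ref{lem:oddcount} of the solution on the positive imaginary axis; the only technical nuisance, $t^{*}$ coinciding with a junction $\tau_{i}$ where $t < \tau$ fails, is absorbed by arguing on the adjacent subinterval via Remark~\ref{rem:smallLeft}.
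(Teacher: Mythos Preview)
Your proof is correct and follows essentially the same route as the paper: the argument principle on a half-disk for the continuity direction, and the Implicit Function Theorem together with the sign of $\Re \dot w$ from Claim~\ref{clm:posder} for the discontinuity direction; your handling of the junction points $t^{*}=\tau_{i}$ is in fact slightly tidier than the paper's second-order analysis, since strict positivity of $\Re \dot w$ for $t<\tau$ already forces $\Re w(t)<0$ just below $t^{*}$. One small point you (and the claim as stated) do not isolate is the endpoint $t^{*}=T$: there is no right side to cross into, and as the paper's own case~(iii) records, $\omega_{+}$ is actually continuous there while only $\overline{\omega}$ jumps.
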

\begin{proof}
If such roots do not exist, then define $h > 0$ by $2h =  \min \bracepair*{ \frac{\cvecmin}{3}, \min_{\substack{z \in \CC\\ P(z, \cvec(t^*)) = 1 }} \bracepair*{\abs{ \Re z}}}$.  Define the region
\begin{equation} \label{eq:ghdef}
G_h = \bracepair*{z \in \CC: \Re z \geq h, \abs{z} \leq 1},
\end{equation}
and note that by the Cauchy Integral Formula, e.g., \cite[Section 4.7, pp. 97 -- 99]{Conway},
\begin{equation}
\countprootstime{t^*} = \frac{1}{2\pi i} \int\limits_{\bd{G_h}} \frac{dP/dz}{P(z; \cvec(t^*)) - 1} \, dz.
\end{equation}
Let
\begin{equation} \label{eq:mudef}
\min\limits_{z \in \bd{G(h)}} \abs{P(z; \cvec(t) - 1} = \mu(t),
\end{equation}
then $\mu(t^*) > 0$, and $\mu(t)$ is continuous at $t^*$, so there exists $\rho > 0$, $\rho < 1$ such that
\begin{equation} \label{eq:muposrules}
\abs{\mu(t) - \mu(t^*)} \leq \frac{1}{2} \mu(t^*) \quad \text{if} \quad \abs{t - t^*} \leq \rho.
\end{equation}
Thus, $\mu(t) > 0$ for $t \in [t^* - \rho, t^* + \rho]$, and the Cauchy integral
\begin{equation}
\eta_+(t) = \frac{1}{2\pi i} \int\limits_{\bd{G_h}} \frac{dP/dz}{P(z; \cvec(t)) - 1} \, dz.
\end{equation}
is continuous on  $[t^* - \rho, t^* + \rho]$, but it is integer-valued, being the counting-function for the roots of 
\begin{equation} \label{eq:mainLateReviseGen}
P(z; \cvec(t)) - 1 = 0
\end{equation} 
in the interior of $G_h$, so $\eta_+(t)$ is constant.  Therefore, the number of roots of \eqref{eq:mainLateReviseGen} in $G_h$ is $\omega(t^*)$ for all $t$ in $[t^* - \rho, t^* + \rho]$.

To show that $\eta_+(t) = \countprootstime{t}$, we must show that no roots enter from the left.  We know that there are no roots to \eqref{eq:mainLateRevise} in the strip $\bracepair{ \xi \in \CC: \abs{\Re \xi} < 2 h}$, in particular on the imaginary axis, so we consider
\[
G_{0} = \bracepair*{z \in \CC: \Re z \geq 0, \abs{z} \leq 1}.
\]
In the same way, shrinking $\rho$ if necessary,  for $t \in [t^* - \rho, t^* + \rho]$ there are no roots of \eqref{eq:mainLateRevise} on $\bd{G_0}$ (in particular, on the imaginary axis), and on $[t^* - \rho, t^* + \rho]$, the function 
\[
\overline{\eta}(t) =  \frac{1}{2\pi i} \int\limits_{\bd{G_{0}}} \frac{dP/dz}{P(z; \cvec(t)) - 1} \, dz
\]
is a continuous counting-function, hence constant.  $\overline{\eta}(t^*) = \eta^+(t^*) = \countprootstime{t^*}$, as there are no roots at time $t^*$ with small real part, so as $\overline{\eta}$ and $\eta_+$ are constant on this interval, and there are no roots on the imaginary axis, the number of roots in $G_{0} \setminus G_h$ is $0$ for all $t \in [t^* - \rho, t^* + \rho]$.  All roots in the closed right-half-plane must be in $G_0$ by Lemma~\ref{lem:zerofreedisk}, so we must have $\countprootstime{t} = \eta^+(t)$ for $t \in [t^* - \rho, t^* + \rho]$.  Thus, $\countprootstime{t}$ is constant for $t \in [t^* - \rho, t^* + \rho]$ for some small $\rho$.  (Since we showed there were no roots on the imaginary axis, $\countprootstime{t} = \countpzrootstime{t}$ for $t \in [t^* - \rho, t^* + \rho]$, so the counting-function on the closed half-plane is also constant).

Suppose that \eqref{eq:mainLateRevise} has a pure imaginary root; since solutions to \eqref{eq:maineqn} imply solutions to \eqref{eq:maineqnabstwo}, we have by Lemmas~\ref{lem:oddcount} and \ref{lem:zerofreedisk} that the root can only be $\pm i Y$, $1 > Y > d > 0$, $d \in \eqref{eq:ddef}$, only two roots, and they are simple by Lemma~\ref{lem:simplezeros}.  Now $t^* \in [0, T]$ can be one of three types of points: 
\begin{enumerate}[label = (\roman*)]
\item \label{enum:tinterior} $t^* \in (t_k, t_{k + 1})$ for some $k$, $0 \leq k < p \leq q - 1$;
\item \label{enum:tboundary} $t^* = t_k$, $0 \leq k < p$;
\item \label{enum:tlast} $t^* = T$.
\end{enumerate} 
We need to know well the behaviour of the root $\widetilde{w}(t)$ with $\widetilde{w}(t^*) = i Y$, for $t$ around $t^*$, a well-determined function for small $\rho$ by the Implicit Function Theorem. 

The case \ref{enum:tinterior} is easier: with $\rho$, $0 < \rho < \frac{1}{2} \min \bracepair{t_{k + 1} - t^*, t^* - t_k}$ the path $\cvec(t)$, or $\mathbf{d}(t)$, is defined by formulas \eqref{eq:dkdefhgen}, \eqref{eq:cdinvert}; this is analytic on $I_{\rho} = [t^* - \rho, t^* + \rho]$, or even if we talk about complex $t$ in a neighborhood of $I_{\rho} \subseteq \CC$.  By \eqref{eq:posderqone}, $K =  \left. \Re \wttder(t) \right\vert_{t = t^*} > 0$, so for $\rho$ small enough,
\begin{equation} \label{eq:wttderlowerbd}
\Re \wttder(t) \geq \frac{1}{2} K > 0, \quad t \in I_{\rho},
\end{equation}
so
\begin{subequations} 
\begin{align}
\Re \widetilde{w}(t) &= \Re (\widetilde{w}(t) - \widetilde{w}(t^*)) = \Re \int_{t^*}^t \wttder(\sigma) \, d\sigma \geq \frac{1}{2} K (t - t^*), && \text{ if } t > t*, \label{eq:wtrightform}\\
\Re \widetilde{w}(t) & - \Re (\widetilde{w}(t^*) - \widetilde{w}(t)) = - \Re \int_{t}^{t^*} \wttder(\sigma) \, d\sigma \leq - \frac{1}{2} K(t^* - t), && \text{ if } t < t^*.
\end{align}
\end{subequations}
Now we know the past and the future of the roots $\pm i Y(\cvec(t^*))$: they are in $E^+$ if $t^* \leq t \leq t^* + \rho$, and they are in $E^-$ if $t^* - \rho \leq t < t^*$.   All other roots remain in their half-planes; it can be explained as in \eqref{eq:ghdef} -- \eqref{eq:muposrules} (with $G_{-h}$ in the place of $G_0$).  Therefore,
\begin{equation} \label{eq:countpjump}
\countprootstime{t} = \begin{cases} \countprootstime{t^*} + 2, & t^* < t \leq t^* + \rho\\
\countprootstime{t}, & t^* - \rho \leq t \leq t^* \end{cases}
\end{equation}
and
\begin{equation} \label{eq:countpzjump}
\countpzrootstime{t} = \begin{cases} \countpzrootstime{t^*} = \countprootstime{t^*} + 2, & t^* \leq t \leq t^* + \rho\\
\countpzrootstime{t*} - 2 = \countprootstime{t^*}, & t^* - \rho \leq t < t^* \end{cases}
\end{equation}
The case~\ref{enum:tboundary}, i.e, $t^* = t_k$, $0 \leq k \leq p - 1$, is more delicate because the function $\cvec(t)$, or $\mathbf{d}(t)$, at $t^*$ is only continuous.  It is defined by different $C^{\infty}$ (or analytic) functions on $[t_{k - 1}, t_k]$ and $[t_k, t_{k + 1}]$.  Thus, we need to analyze them and their derivatives on $[t_{k - 1}, t_k)$ and $(t_k, t_{k + 1}]$ separately.  Claim~\ref{clm:posder} gives us all the information; the later case $(t_k, t_{k + 1}]$ is an analogue of $(0, \tau]$, so 
\begin{equation} \label{eq:wttderleftend}
\Re \wttder(t_k) > 0
\end{equation}
and we can repeat \eqref{eq:wttderlowerbd} and \eqref{eq:wtrightform} to justify the claim
\begin{equation}
\Re \widetilde{w}(t) > 0 \text{ if } t_k < t \leq t_k + \rho.
\end{equation}
(This also suffices for the case $t^* = 0$, i.e., $k = 0$).  If, however, $t < t_k$, the derivative $\widetilde{w}(t)$ by \eqref{eq:wtderformqone} or \eqref{eq:maindiffeqqgen} has two factors 
\begin{equation} \label{eq:wttdersplit}
\wttder(t) = F(\widetilde{w}(t)) \cdot \Delta(t), 
\end{equation}
and by \eqref{eq:absreduce}, \eqref{eq:Hbounds},
\begin{equation} \label{eq:Fboundslow}
\Re F(\widetilde{w}(t)) \geq K_* > 0, \quad \abs{t - t^*} \leq \rho,
\end{equation}
and $\Delta(t) = d_q(t) - d_0(t)$, with $\Delta(t) \geq \Delta_* > 0$ for some $\Delta_*$ in the cases \ref{enum:lowersmaller}, \ref{enum:uppersmaller}, \ref{enum:luequal} (a), and $\left. \Delta(t) \right\vert_{t = t^*} = 0$ in the cases \ref{enum:luequal} (b), \ref{enum:qone}.  Yet $\Delta(t) = d_q \exp (- m_0 t) - d_0\exp(m_q t)$, and we have $\left. \frac{d \Delta}{dt} \right\vert_{t = t^*} = - (m_0 d_q + m_q d_0)d_0 e^{m_q t^*}$,
so 
\begin{equation} \label{eq:Deltaderboundslow}
\frac{d \Delta}{dt}(t) \leq - L_* (t^* - t), \quad \text{ for some } L_* > 0 \text{ and } \rho \ll 1.
\end{equation}
Therefore, for $t = t^* - h$, $0 \leq h \leq \rho$, by \eqref{eq:Fboundslow} and \eqref{eq:Deltaderboundslow}, 
\begin{equation}
\Re \widetilde{w}(t) = - \Re [\widetilde{w}(t^*) - \widetilde{w}(t)] = - \int_{t^* - h}^{t^*} \Re \wttder(\xi) \, d\xi \geq K_* L_* \int_0^h \eta \, d\eta = \frac{h^2}{2} K_* L^* > 0. 
\end{equation}
Therefore, as in Case~\ref{enum:tinterior}, the inequalities~\eqref{eq:wttderleftend} and \eqref{eq:Deltaderboundslow} justify \eqref{eq:countpjump} and \eqref{eq:countpzjump} if $t^* = \tau_k$, $0 \leq k < p$.

The case~\ref{enum:tlast} is special; it happens only if $\cvec(t) = c^*$, i.e., $[\tau_{p - 1}, T]$ is an analogue of $[0, \tau]$ in Cases~\ref{enum:luequal}(b) or \ref{enum:qone}.  As in \eqref{eq:wttdersplit}, a pure imaginary root comes from the left, so
\begin{subequations}
\begin{align}
\countprootstime{t} = \countprootstime{T} & \text{ if } T - \rho \leq t \leq T\\
\countpzrootstime{t} = \countpzrootstime{T} - 2 & \text{ if } T - \rho \leq t < T.
\end{align}
\end{subequations}
\end{proof}
 
\section{Construction of multisets such that \texorpdfstring{$\countproots{\cvec} =1 $ and $\countpzroots{\cvec} = 1$}{nu+(c) = 1 and nubar(c) = 1} } \label{sec:nuOne}

We have demonstrated that for all $\cvec \in (\rplus)^n$ with geometric mean $\cvecgeom < 1$, the maximum values for $\countproots{\cvec}$ and $\countpzroots{\cvec}$ are achieved by $\cvecideal$.  The question is what the minimum value is, or could be.

As per Lemma~\ref{lem:oddcount}, if the geometric mean $\cvecgeom < 1$, then $\countproots{\cvec} \geq 1$ and $\countpzroots{\cvec} \geq 1$ by the positive real root.  We now show that this lower bound is the minimum.

\begin{prop} \label{prop:countoneachieved}
Fix $n \in \NN$,  and fix $\cvecgeom \in (0, 1)$.  There exists $\cvec \in (\rplus)^n$ with geometric mean $\cvecgeom$ such that $\countproots{\cvec} = \countpzroots{\cvec} = 1$.  
\end{prop}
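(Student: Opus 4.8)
The plan is to exhibit an explicit two-value family and verify it directly. Fix $s>0$ and set $c_1=t$, $c_2=\dots=c_n=s$ with $t\declare \cvecgeom^{n}\,s^{-(n-1)}$, so that $\prod_{k=1}^n c_k=t\,s^{n-1}=\cvecgeom^{n}$ and $\cvec=(c_1,\dots,c_n)$ has geometric mean $\cvecgeom$; I will fix $s$ large (depending only on $\cvecgeom$ and $n$) at the very end. The case $n=1$ is trivial ($\cvec=(\cvecgeom)$, and $P(z;\cvec)=1$ reads $z=1-\cvecgeom>0$), so assume $n\ge 2$. Write $\mu\declare s^{-(n-1)}$, so $P(z;\cvec)=(z+t)(z+s)^{n-1}$. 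For $z\neq -s$, dividing by $(z+s)^{n-1}$ shows that $P(z;\cvec)=1$ is equivalent to the fixed-point equation $z=g(z)$ with $g(z)\declare (z+s)^{-(n-1)}-t$. Since $\Re(z+s)\ge s>0$ for $z\in E$, the function $g$ is holomorphic on $E$, and counting roots of $P(z;\cvec)=1$ in $E$ becomes counting fixed points of $g$ in $E$.

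The key point is that $g$ compresses all of $E$ into a tiny disk about the origin. On $E$ we have $|z+s|\ge\Re(z+s)\ge s$, so $|g(z)|\le\mu+t=(1+\cvecgeom^{n})\mu<2\mu$; hence $g(E)\subseteq B\declare\{w\in\CC:|w|\le 2\mu\}$. An elementary estimate --- writing $(z+s)^{-(n-1)}=\mu(1+z/s)^{-(n-1)}$ and noting that $|z/s|\le 2\mu/s$ can be made arbitrarily small by enlarging $s$ --- shows that, for $s$ large and $z\in B$, the quantity $(1+z/s)^{-(n-1)}$ lies within distance $\tfrac12(1-\cvecgeom^{n})$ of $1$; consequently $|g(z)|<2\mu$ and $\Re g(z)\ge \tfrac12(1-\cvecgeom^{n})\mu>0$. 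Thus $g$ maps the compact convex set $K\declare B\cap E$ into itself. Since $|g'(z)|=(n-1)|z+s|^{-n}\le(n-1)s^{-n}<1$ on $E$ once $s^{n}>n-1$, the map $g|_K$ is a contraction of the nonempty complete metric space $K$, hence has a unique fixed point $z_\ast\in K$. Every root of $P(z;\cvec)=1$ in $E$ is a fixed point of $g$, hence lies in $g(E)\subseteq B$, hence in $K$, hence equals $z_\ast$; therefore $\countpzroots{\cvec}=1$. Because $K$ is symmetric about $\RR$ and $g$ commutes with complex conjugation, $\overline{z_\ast}$ is also a fixed point in $K$, so $z_\ast=\overline{z_\ast}\in\RR$; then $z_\ast\in K\cap\RR=[0,2\mu]$, and $z_\ast\neq 0$ since $P(0;\cvec)=\cvecgeom^{n}<1$. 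Hence $z_\ast>0$, so the unique root of $P(z;\cvec)=1$ in $E$ lies in $E^+$, giving $\countproots{\cvec}=\countpzroots{\cvec}=1$; by Lemma~\ref{lem:oddcount}, $z_\ast$ is the positive real root, located in $(0,2\mu)\subset(0,1)$ for $s$ large.

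The remaining work is purely quantitative: choosing one threshold $s_0=s_0(\cvecgeom,n)$ that simultaneously forces $g(B)\subseteq B$, $\Re g>0$ on $B$, and $|g'|<1$ on $E$. I expect the only delicate point to be the positivity $\Re g>0$ on $B$: here $\Re g(z)=\mu\,\Re(1+z/s)^{-(n-1)}-\cvecgeom^{n}\mu$, and since $\cvecgeom^{n}\in(0,1)$ is a fixed constant that may be close to $1$, one must use that $\Re(1+z/s)^{-(n-1)}\to 1$ --- not merely that $|(1+z/s)^{-(n-1)}|$ stays bounded --- uniformly for $z\in B$ as $s\to\infty$, so as to retain the fixed positive margin $1-\cvecgeom^{n}$. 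Everything else is bookkeeping; notably, this argument needs neither Lemma~\ref{lem:basicUB} nor the deformation machinery of Sections~\ref{sec:ctod}--\ref{sec:movement}, since the self-compression of $g$ already localizes every root of $P(z;\cvec)=1$ in $E$.
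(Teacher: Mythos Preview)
Your argument is correct and takes a genuinely different route from the paper's. One small point: you conclude $\countpzroots{\cvec}=1$ from the uniqueness of the fixed point, but $\countpzroots{\cdot}$ counts with multiplicity. This is immediate to fix: since $P(z;\cvec)-1=(z+s)^{n-1}\bigl(z-g(z)\bigr)$ and $(z+s)^{n-1}\neq 0$ on $E$, the multiplicity of $z_\ast$ as a root of $P-1$ equals its multiplicity as a zero of $z-g(z)$; but $\bigl.\tfrac{d}{dz}(z-g(z))\bigr|_{z_\ast}=1-g'(z_\ast)\neq 0$ because $|g'(z_\ast)|<1$. (Alternatively, cite Lemma~\ref{lem:simplezeros}.)

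\medskip

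\noindent\textbf{Comparison with the paper.} The paper proves the stronger Proposition~\ref{prop:countoneachievers}: for \emph{any} middle block $\cvec'\in(\rplus)^{n-2}$, one can append a small $d_0$ and a large $d_q$ to force $\countproots{\cvecext}=\countpzroots{\cvecext}=1$. The tool there is Rouch\'e / the argument principle: for $M$ large, each disk $\mathbb{D}_j$ about $-b_j$ (including $-M$) traps exactly $m_j$ roots, accounting for $n-1$ roots in $E^-$. Your construction is the special case of a two-value family $(t,s,\dots,s)$, and your tool is the Banach contraction principle applied to the rewritten equation $z=g(z)=(z+s)^{-(n-1)}-t$. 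What you gain is elementarity and brevity --- no contour integrals, no disk bookkeeping, and the simplicity of the unique root falls out of the Lipschitz bound. What the paper's approach buys is the extra generality of Proposition~\ref{prop:countoneachievers} (arbitrary interior data), though for Proposition~\ref{prop:countoneachieved} itself your argument is the more direct one.
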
 

We note that for $1 \leq n \leq 4$, $0 < \cvecgeom < 1$,
\[
(z + \cvecgeom)^n = 1
\]
has only one root with positive real part, as follows from \eqref{eq:specialsolform}, so $\countproots{\cvecideal} = \countpzroots{\cvecideal} = 1$.  In the sequel, we assume that $n \geq 5$.

It turns out that control of the $2$ extreme coordinates in $\cvec$ suffices to force the number of eigenvalues in the right-half-plane to be equal to $1$.  For convenience, let $n^{\prime} = n - 2$. 
\begin{prop} \label{prop:countoneachievers}
Let $n \in \NN$, $n \geq 5$, and fix $\cvec^{\prime} \in (\rplus)^{n^{\prime}}$.  Then for any $\cvecgeom \in (0, 1)$, there exists a vector $\cvecext = (d_0, \cvec^{\prime}, d_q) \in (\rplus)^n$ with geometric mean $\cvecgeom$ and $\countproots{\cvecext} = \countpzroots{\cvec_{\text{ext}}} = 1$.   
\end{prop}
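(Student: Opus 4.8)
The plan is to realize $\cvecext$ as a degeneration of the two free coordinates: drive one of them to $0$ and the other to $+\infty$ while keeping their product fixed, so that the geometric mean stays equal to $\cvecgeom$. Concretely, set $\pi' \declare \prod_{k=1}^{n^{\prime}} c'_k$ and $p \declare \cvecgeom^n/\pi' > 0$, and for a large parameter $M$ take $d_0 = p/M$ and $d_q = M$, so that $\cvecext = (p/M, \cvec', M)$ has $(p/M)\cdot \pi' \cdot M = \cvecgeom^n$, i.e.\ geometric mean $\cvecgeom$. Writing $Q(z) = \prod_{k=1}^{n^{\prime}}(z + c'_k)$ and $P_M(z) = (z + p/M)(z + M)Q(z) = P(z; \cvecext)$, I want to show that for all sufficiently large $M$ the equation $P_M(z) = 1$ has exactly one root in the closed right half-plane $E$ and none on $E^0$; this yields $\countproots{\cvecext} = 1$, $\countzroots{\cvecext} = 0$, and hence $\countpzroots{\cvecext} = 1$.

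Fix $\rho$ with $0 < \rho < \min\{1, \min_k c'_k\}$ and split $\{\Re z \ge 0\}$ into $\{|z| \ge \rho\}$ and $\{|z| < \rho\}$. For the outer part I would estimate moduli directly: if $\Re z \ge 0$ and $|z| \ge \rho$ then $|z + M| \ge M$, $|Q(z)| \ge \pi'$, and $|z + p/M| \ge |z| - p/M \ge \rho - p/M$, so $|P_M(z)| \ge (\rho - p/M)\,M\,\pi'$, which exceeds $1$ once $M$ is large; hence there is no root of $P_M(z) = 1$ with $\Re z \ge 0$ and $|z| \ge \rho$. For the inner part I would use Rouch\'e's theorem on the circle $|z| = \rho$: there $|P_M(z)| \ge (\rho - p/M)(M - \rho)\prod_k(c'_k - \rho) > 1$ for $M$ large, so $P_M$ and $P_M - 1$ (whose difference has modulus $1 < |P_M|$ on $|z| = \rho$) have the same number of zeros in $\{|z| < \rho\}$; since the zeros of $P_M$ are $-p/M, -M, -c'_1, \dots, -c'_{n^{\prime}}$ and only $-p/M$ lies in $\{|z| < \rho\}$ once $M$ is large, $P_M(z) = 1$ has exactly one root there, counted with multiplicity.

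Finally I would identify that root. By Lemma~\ref{lem:oddcount}, $P_M(z) = 1$ has a unique root $z_M^+$ on $[0,\infty)$, lying in $(0,1)$. Evaluating $(z_M^+ + p/M)(z_M^+ + M)Q(z_M^+) = 1$ and using $z_M^+ + M > M$ together with $Q(z_M^+) \ge Q(0) = \pi'$ gives $z_M^+ + p/M < 1/(M\pi')$, hence $z_M^+ < (1 - \cvecgeom^n)/(M\pi')$, which is $< \rho$ for $M$ large. Thus $z_M^+$ is a root of $P_M(z) = 1$ inside $\{|z| < \rho\}$, and therefore, by the Rouch\'e count, the only one there. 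Combined with the outer estimate, $z_M^+$ is the unique root of $P_M(z) = 1$ with $\Re z \ge 0$, so $\countproots{\cvecext} = 1$; since $z_M^+ \notin i\RR$, no root lies on $E^0$, whence $\countzroots{\cvecext} = 0$ and $\countpzroots{\cvecext} = \countproots{\cvecext} + \countzroots{\cvecext} = 1$.

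The one point that needs care is that the earlier uniform statements (Lemma~\ref{lem:zerofreedisk}, Lemma~\ref{lem:basicUB}, Corollary~\ref{cor:zerocontainmentannulus}) cannot be quoted directly here, because in this family $\cvecmax = M \to \infty$ and the zero-free radius $d$ degenerates to $0$; the two elementary modulus estimates above are designed to exploit precisely that $d_q = M$ is large and serve as their replacements in this regime. Everything else — the Rouch\'e comparison and the a priori bound on $z_M^+$ — is routine bookkeeping in $M$.
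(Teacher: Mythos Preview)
Your proof is correct. Both you and the paper exploit the same limiting mechanism---sending one appended coordinate to $0$ and the other to $+\infty$ while fixing the product---and both finish with a Rouch\'e/argument-principle count, but the bookkeeping is organized differently. The paper rescales and then places small disjoint disks around each of the $n-1$ left-half-plane zeros $-b_1,\dotsc,-b_{q-1},-M$ of $P_M$, shows $|P_M| \geq 2G^n$ on each boundary, and concludes that $n-1$ roots of $P_M=G^n$ lie in $E^-$, leaving exactly one for $E^+$. You instead use a single Rouch\'e disk $\{|z|<\rho\}$ about the origin (trapping the one root that migrates from $-p/M$) and dispose of the rest of the closed right half-plane by the direct lower bound $|P_M(z)| \ge |z+p/M|\cdot|Q(z)|\cdot|z+M| \ge (\rho - p/M)\,\pi'\,M > 1$; the identification of that trapped root with the real positive root $z_M^+$ then finishes the count. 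Your route is shorter and keeps the analysis entirely in the region of interest; the paper's route gives, as a byproduct, finer localization of \emph{all} $n$ roots. The underlying reason either argument works is the same: for large $M$, $|P_M|$ is uniformly large on any compact set bounded away from the zeros of $P_M$.
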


To begin the proof, we write $\cvec^{\prime}$ in $d$-notation as $\dlong{\mathbf{d}^{\prime}}{m^{\prime}}{q^{\prime}}$.  We extend $\cvec^{\prime}$ to $\cvecext$ by setting $q = q^{\prime} + 2$ and choosing $d_0 < d^{\prime}_0$ and $d_q > d^{\prime}_{q^{\prime}}$ such that $d_0 \left[\prod_{j = 0}^{q^{\prime}} \left(d^{\prime}_j\right)^{m^{\prime}_j} \right] d_q = \cvecgeom^n$.  Set
\[
d_j = \begin{cases} d_0, & j = 0\\
d^{\prime}_{j - 1}, & 1 \leq j \leq q - 1 = q^{\prime} + 1,\\
d_q, & j = q \end{cases}
\]
\[
m_j = \begin{cases} 1, & j = 0\\
m^{\prime}_{j - 1}, & 1 \leq j \leq q - 1 = q^{\prime} + 1,\\
1, & j = q \end{cases}
\]
and let $\cvecext$ be the resulting vector in $(\rplus)^n$ as created by \eqref{eq:cdinvert}.  Altogether, setting $P(z; \cvecext) = 1$, we have
\begin{equation} \label{eq:laterestart}
\begin{split}
(z + d_0) \left( \prod_{j = 1}^{q - 1} (z + d_j)^{m_j} \right)(z + d_q) = 1;\\
d_0 \left( \prod_{j = 1}^{q - 1} d_j^{m_j} \right) d_q = \cvecgeom^n , \quad n^{\prime} = \sum_{j = 1}^{q-1} m_j, 
\end{split}
\end{equation}
We rescale the coefficients to make $\cvecgeom$ evident:
\begin{equation}
d_0 = \frac{\cvecgeom}{A}, \quad d_{q} = M \cvecgeom; \quad d_j = b_j \cvecgeom,\quad 1 \leq j \leq q - 1,
\end{equation}
and \eqref{eq:laterestart} becomes
\begin{equation}
\left( z + \frac{\cvecgeom}{A} \right)  \left[ \prod_{j = 1}^{q - 1} (z + b_j \cvecgeom)^{m_j} \right] (z + M \cvecgeom ) = 1.
\end{equation}
We rescale $z$ as $z = \cvecgeom w$ to move all the $\cvecgeom$ terms to the other side, which does not change the signs of the real parts of any zeroes; letting $G = \frac{1}{\cvecgeom}$ we have
\begin{subequations}
\begin{align}
\left( w + \frac{1}{A} \right) \left[  \prod_{j = 1}^{q - 1} (w + b_j )^{m_j} \right] (w + M ) = G^n, \label{eq:dscaleeqn}\\
\frac{1}{A} B^{n^{\prime}} M = 1, \quad B^{n^{\prime}} = \prod_{j = 1}^{q - 1} b_j^{m_j}. \label{eq:dscaleform}
\end{align}
\end{subequations}
We will fix $d_j$, $1 \leq j \leq q - 1$, i.e., the $(b_j)_{j = 1}^{q-1}$ of \eqref{eq:dscaleform}, and in \eqref{eq:dscaleeqn} 
\begin{equation} \label{eq:blist}
b_0 = \frac{1}{A} < b_1 < \dotsb < b_{q - 1} < b_q = M.
\end{equation}
and (thinking of $M$ as our parameter, and $A$ varying as in \eqref{eq:dscaleform} to balance the geometric mean) study the polynomial
\begin{equation}
P_M \equiv P(w; \mathbf{b}, M) \text{ of the left-hand-side of } \eqref{eq:dscaleeqn}.  
\end{equation}
and the roots of \eqref{eq:dscaleeqn}. Similarly to the previous, we define $\mu_+(M)$ and $\overline{\mu}(M)$ to be the number of roots of $P_M(z) = G^n$ in the open right-half-plane $E^+$ and the closed half-plane $E$, respectively .
\begin{bigclm} \label{clm:rescaleOne}
If $M \in \eqref{eq:blist}$ is sufficiently large, and $A = M \cdot B^{n^{\prime}}$ as in \eqref{eq:dscaleform}, then
\begin{equation} \label{eq:isone}
 \mu_+(M) = \overline{\mu}(M) = 1.
\end{equation}
\end{bigclm}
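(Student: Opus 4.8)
The plan is a limiting argument as $M \to \infty$: I would first confine every root of $P_M(w) = G^n$ with $\Re w \ge 0$ to an arbitrarily small disk about the origin, and then count the roots in that disk by comparing $P_M$ with its rescaled limit $w\prod_{j=1}^{q-1}(w+b_j)^{m_j}$.

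\emph{Confinement.} Recall $G = 1/\cvecgeom > 1$ and, by \eqref{eq:dscaleform}, $\tfrac{1}{A} = \tfrac{1}{M B^{n^{\prime}}}$. If $w = u + iv$ with $u \ge 0$ satisfies $P_M(w) = G^n$, then taking moduli in \eqref{eq:dscaleeqn} and discarding cross terms (legitimate because $u \ge 0$),
\[
G^{2n} = \bigl((u+\tfrac{1}{A})^2+v^2\bigr)\Bigl(\prod_{j=1}^{q-1}\bigl((u+b_j)^2+v^2\bigr)^{m_j}\Bigr)\bigl((u+M)^2+v^2\bigr) \ \ge\ |w|^2\, B^{2n^{\prime}}\, M^2,
\]
so $|w| \le G^n/(B^{n^{\prime}} M)$. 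Fixing once and for all any $\epsilon_0$ with $0 < \epsilon_0 < b_1$ (see \eqref{eq:blist}, recalling $q \ge 2$ since $\cvec^{\prime}$ is nonempty), it follows that for all sufficiently large $M$ every root of $P_M(w) = G^n$ in $E = \overline{E^+}$ lies in $\metricball{0}{\epsilon_0}$.

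\emph{Counting in the disk.} Set $g_M(w) = \tfrac{1}{M}\bigl(P_M(w) - G^n\bigr) = \bigl(w + \tfrac{1}{M B^{n^{\prime}}}\bigr)\bigl[\prod_{j=1}^{q-1}(w+b_j)^{m_j}\bigr]\bigl(1 + \tfrac{w}{M}\bigr) - \tfrac{G^n}{M}$, so the roots of $P_M(w) = G^n$ are exactly the zeros of $g_M$. On $\overline{\metricball{0}{\epsilon_0}}$ one has $g_M \to g_\infty$ uniformly, where $g_\infty(w) = w\prod_{j=1}^{q-1}(w+b_j)^{m_j}$. Since $\epsilon_0 < b_1$, the only zero of $g_\infty$ in $\overline{\metricball{0}{\epsilon_0}}$ is the simple zero at $0$, and $|g_\infty| \ge \epsilon_0 (b_1-\epsilon_0)^{n^{\prime}} > 0$ on $|w| = \epsilon_0$; by Rouch\'e's theorem, for $M$ large $g_M$ has exactly one zero in $\metricball{0}{\epsilon_0}$, i.e. $P_M(w) = G^n$ has exactly one root there.

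\emph{Conclusion.} The function $P_M$ is strictly increasing on $(0,\infty)$ with $P_M(0) = \tfrac{1}{A} B^{n^{\prime}} M = 1 < G^n$ and $P_M(w) \to \infty$, so there is a root $w_0 > 0$; hence $\mu_+(M) \ge 1$. Choosing $M$ large enough for both preceding steps, every root in $E$ lies in $\metricball{0}{\epsilon_0}$, which contains exactly one root (namely $w_0$), so $\overline{\mu}(M) \le 1$; combined with $\mu_+(M) \le \overline{\mu}(M)$ (as $E^+ \subset E$) this forces $1 \le \mu_+(M) \le \overline{\mu}(M) \le 1$, which is \eqref{eq:isone}. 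I see no real obstacle: the only thing to watch is that the two ``$M$ large'' thresholds be chosen after $\epsilon_0$ is fixed, which is automatic, and the content of the argument is the simple observation that the factor $(w+M)$ repels every nonnegative-real-part root toward the origin, where the limiting function has a single simple zero.
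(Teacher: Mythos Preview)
Your argument is correct, and it is a genuinely different route from the paper's. The paper works on the \emph{left} side: it places small disjoint disks $\mathbb{D}_j$ around $-b_1,\dotsc,-b_{q-1}$ and $-M$, shows $\lvert P_M\rvert \ge 2G^n$ on each $\partial\mathbb{D}_j$ for $M$ large, and then uses the argument principle to conclude that each $\mathbb{D}_j$ contains $m_j$ roots of $P_M(w)=G^n$ (the same count as for $P_M(w)=0$). This locates $n-1$ roots in $E^-$, leaving only the obvious positive real root. You instead work on the \emph{right} side: the factor $(w+M)$ forces every root in $E$ into a disk of radius $O(1/M)$ about the origin, and a single Rouch\'e comparison with the rescaled limit $g_\infty(w)=w\prod_{j=1}^{q-1}(w+b_j)^{m_j}$ shows that disk contains exactly one root. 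Your approach is more economical for the stated claim (one Rouch\'e estimate instead of $q$, and no need to track the separation of the $b_j$), while the paper's approach yields finer information, namely that the remaining $n-1$ roots cluster near the points $-b_1,\dotsc,-b_{q-1},-M$ with the expected multiplicities.
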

\begin{proof}
As in Lemma~\ref{lem:oddcount}, there is a guaranteed root in $(0, G)$ by the Intermediate Value Theorem, as by \eqref{eq:dscaleform},
\[
P_M(0) = \frac{1}{A} B^{n^{\prime}} M = 1 < G^n < \left( G + \frac{1}{A} \right) \left[ \prod_{j = 1}^{q - 1} (G + b_j)^{m_j} \right] (G + M) = P_M(G).
\]
Thus, $\overline{\mu}(M) \geq \mu_+(M) \geq 1$.

Put 
\begin{equation} \label{eq:betadef}
2 \beta = \min \bracepair*{ \frac{b_1}{2}, \min \bracepair{b_{j + 1} - b_j: 1 \leq j \leq  q - 2}};
\end{equation}
then the interiors of the disks 
\begin{equation}
\mathbb{D}_j = \bracepair{w \in \CC: \abs{w + b_j} \leq \beta}, \quad 1 \leq j \leq q - 1, 
\end{equation}
do not intersect, and their closures are in $E^-\in \eqref{eq:Esets}$.  If, in addition, $M \geq \frac{2}{B^n b_1}$, then 
\[b_1 - b_0 = b_1 - \frac{1}{A} \geq b_1 - \frac{b_1}{2} = \frac{b_1}{2} \geq 2 \beta\]
and the disks $\mathbb{D}_0$ and $\mathbb{D}_1$ have disjoint interior.  Similarly, if $M \geq b_{q - 1} + 2 \beta$, then the interiors of $\mathbb{D}_{q - 1}$ and $\mathbb{D}_q$ do not intersect, and $\mathbb{D}_q$ is contained in the open left-half plane.  We wish to show that for $1 \leq j \leq q$,
\begin{equation} \label{eq:boundcirclebound}
\min \bracepair{\abs{P_M(w)}: \abs{w + b_j} = \beta} = \min\limits_{w \in  \bd{\mathbb{D}_{j}}} \bracepair{\abs{P_m(w)} }\geq 2 G^n
\end{equation}
if $M \geq \max  \bracepair*{\frac{2}{\beta B^{n^{\prime}}}, 2(b_{q - 1} + \beta), \frac{8}{\beta^{n- 1}} G^{n + 2}}$.  Indeed, if $k \neq j$, $0 \leq j, k \leq q$,
\[
\abs{w + b_k} = \abs{(w + b_j) + (b_k - b_j)} \geq \abs{b_k - b_j} - \abs{w + b_j} \geq 2 \beta - \beta = \beta,
\]
so for $w \in \bd{\mathbb{D}_j}$, $1 \leq j \leq q$,
\[
\begin{split}
\abs{P_M(w)} &\geq \abs{w + \frac{1}{A} } \beta^{n^{\prime}} \abs{w + M} \\
& \geq \left( \beta - \frac{1}{A} \right) \beta^{n^{\prime}} (M - b_{q - 1} - \beta)\\
& \geq \frac{1}{2} \beta \cdot \beta^{n^{\prime}} \cdot \frac{1}{2} M \geq \frac{\beta^{n - 1}}{4} M
\end{split}
\]
if $\frac{1}{A} \leq \frac{1}{2} \beta$, or $M \geq \frac{2}{\beta B^{n^{\prime}}}$, and $M \geq 2(\beta + b_{q - 1})$.  We choose \\
$M \geq \max \bracepair*{\frac{2}{\beta B^{n^{\prime}}}, 2(b_{q - 1} + \beta), \frac{8}{\beta^{n- 1}} G^{n + 2}}$.  Then \eqref{eq:boundcirclebound} holds.

The first term does not exceed the third (because $b_j \geq 2j \beta$, $j \geq 1$, by \eqref{eq:betadef}), so we choose
\begin{equation} \label{eq:mchoice}
M = 8 \max \bracepair*{b_{q - 1},  \frac{G^n}{\beta^{n - 1}}}.
\end{equation}

Then the number of roots of $P_M(w) = \xi$ does not depend on $\xi$ if $\xi \leq \frac{1}{2} \min\limits_{w \in  \bd{\mathbb{D}_{j}}} \bracepair{\abs{P_M(w)} }$, as this is the integral 
\[
N(\mathbb{D}_j) = \frac{1}{2\pi i} \int\limits_{\bd{\mathbb{D}_j}} \frac{\disp \frac{dP_M}{dw}}{P_m(w) - \xi} \, dw \quad \text{ (with the counterclockwise orientation)}
\]
(see, e.g., \cite[Section 4.7, pp. 97 -- 99]{Conway}).  In particular, when $\xi = 0$, this is $m_j$, $1 \leq j \leq q$ (this includes the case $j = q$, i.e., $b_q = M$).  Therefore, counted with multiplicity, the number of roots of $P_m(w) = G^n$ is also $m_j$ for $M$ as above.  This holds for $j = 1, 2, \quad q$.  For such $j$, all such $\mathbb{D}_j$ are in the left half-plane, as $\beta \leq  b_1 - b_0 < b_1$.  Hence, there are $\sum_{j = 1}^{q} m_j = n^{\prime} + 1 = n - 1$ roots in the left half-plane.  Since we have already located the zero in the right-half plane, we have accounted for all roots of the $n$th-degree polynomial $P_M(w) - G^n$: $n - 1$ in the open left-half-plane, none on the imaginary axis, and $1$ in the right half-plane. Thus, $\mu_+(M) = \overline{\mu}(M) = 1$ for $M$ as in \eqref{eq:mchoice}.  
\end{proof}

\section{The Range of \texorpdfstring{$\countproots{\cvec}$ and $\countpzroots{\cvec}$}{nu+(c) and nubar(c)}}  \label{sec:nuAll}
For each $(n, \cvecgeom)$ pair, we have shown the maximum and minimum values for $\countproots{\cvec}$ and $\countpzroots{\cvec}$ for all $\cvec \in (\rplus)^n$ with geometric mean $\cvecgeom$.  By Lemma~\ref{lem:oddcount}, $\countproots{\cvec}$ and $\countpzroots{\cvec}$, are odd integers, but we wish to show that every odd number between the minimum values and maximum values of $\countproots{\cvec}$ and $\countpzroots{\cvec}$ is achieved.

For convenience, combining Lemma~\ref{lem:oddcount} and \eqref{eq:specialsolcountform}, write
\begin{subequations} \label{eq:kappaforms}
\begin{align}
\countproots{\cvecideal} &= 2 \kappa_+ + 1\\
\countpzroots{\cvecideal} &= 2 \overline{\kappa} + 1.
\end{align}
\end{subequations}

\begin{cor} \label{cor:nuInterp}
Fix $n \in \NN$, and $\cvecgeom \in (0, 1)$, and fix $\cvec_0 \in (\rplus)^n$ with geometric mean $\cvecgeom$.  Let $\cvecmin = \min_j c_j$ and $\cvecmax = \max_j c_j$.  Construct $\cvec(t): [0, T] \to \cspace{\cvecgeom}{\cvecmin}{\cvecmax}$ as defined in Sections~\ref{sec:ctod} and \ref{sec:movement}. Then: 
\begin{enumerate}[label = (\roman*)]
\item for any odd $k$, $\countproots{\cvec} \leq k \leq 2 \kappa_+ + 1$, there exists $t = t(k) \in [0, T]$ with  $\countprootstime{t(k)} = k$.
\item for any odd $\ell$, $\countpzroots{\cvec} \leq \ell \leq 2 \overline{\kappa} + 1$, there exists $\overline{t}(\ell) \in [0, T]$ with $\countpzrootstime{\overline{t}(\ell)} = k$.
\end{enumerate}
\end{cor}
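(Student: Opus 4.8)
The plan is to combine the monotonicity of the two counting functions along the path $\cvec(t)$ (Proposition~\ref{prop:finale}) with the structural fact, established in Claim~\ref{clm:countjumps}, that along this path these functions change only by jumps of size exactly $2$. Abbreviate $f(t) \declare \countprootstime{t}$ and $g(t) \declare \countpzrootstime{t}$ for $t \in [0,T]$. By Proposition~\ref{prop:finale}, $f$ and $g$ are non-decreasing on $[0,T]$; they are integer-valued, and, by Lemma~\ref{lem:oddcount} applied to $\cvec(t)$ for each $t$ (which has geometric mean $\cvecgeom < 1$), they are in fact odd-valued. Their endpoint values are $f(0) = \countproots{\cvec_0}$, $f(T) = \countproots{\cvecideal} = 2\kappa_+ + 1$, $g(0) = \countpzroots{\cvec_0}$, and $g(T) = \countpzroots{\cvecideal} = 2\overline{\kappa} + 1$, using $\cvec(0) = \cvec_0$, $\cvec(T) = \cvecideal$, and \eqref{eq:kappaforms}.

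Next I would read off the jump structure from Claim~\ref{clm:countjumps}. At any $t^* \in [0,T]$ for which $P(z; \cvec(t^*)) - 1 = 0$ has no root on $i\RR$, that claim shows $f$ and $g$ are constant in a neighborhood of $t^*$. At any $t^*$ for which such a root exists, Lemma~\ref{lem:oddcount} forces it to be one of the two simple roots $\pm iY$ (with $d < Y < 1$), and \eqref{eq:countpjump}--\eqref{eq:countpzjump} --- together with their analogues at the junction points $t^* = t_k$ and at $t^* = T$, all established in the proof of Claim~\ref{clm:countjumps} --- show that $f$ increases by exactly $2$ as $t$ passes $t^*$, and likewise $g$. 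Since $f$ is non-decreasing with jumps of size $2$ and $f(T) - f(0) \le 2\kappa_+$, it has finitely many discontinuities, say $0 < s_1 < \dotsb < s_N \le T$ with $N \le \kappa_+$; on each complementary subinterval $f$ is constant, and its value rises by $2$ across each $s_i$. Hence the set of values attained by $f$ on $[0,T]$ equals $\{ f(0), f(0) + 2, \dotsc, f(0) + 2N \}$, and since $f(0) + 2N = f(T) = 2\kappa_+ + 1$ this is $\{ \countproots{\cvec_0}, \countproots{\cvec_0} + 2, \dotsc, 2\kappa_+ + 1 \}$. That is exactly assertion (i). Assertion (ii) follows by the identical argument with $g$ in place of $f$, \eqref{eq:countpzjump} in place of \eqref{eq:countpjump}, and endpoint value $g(T) = 2\overline{\kappa} + 1$; the only cosmetic change is that at each discontinuity $g$ is right-continuous (its value at $t^*$ is already the larger one) rather than left-continuous, which does not affect the set of attained values.

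The one point requiring genuine care --- but already furnished by Claim~\ref{clm:countjumps} together with Lemmas~\ref{lem:oddcount} and \ref{lem:simplezeros} --- is that every discontinuity contributes a jump of \emph{exactly} $2$: one must know that no two distinct conjugate pairs of roots can simultaneously lie on $i\RR$ (there is a unique positive-imaginary-axis solution of $\abs{P(z;\cvec(t^*))} = 1$), that such roots are simple, and that the imaginary root genuinely crosses from $E^-$ into $E^+$ rather than merely grazing the axis (the transversality $\Re \wttder(t^*) > 0$, or its second-order substitute at the junction points, is precisely what the proof of Claim~\ref{clm:countjumps} supplies). Granting this, the corollary is the elementary observation that a non-decreasing integer step function which jumps only by $2$ takes every value of the correct parity between its two endpoint values.
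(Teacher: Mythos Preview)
Your proposal is correct and follows essentially the same approach as the paper's proof: both invoke the monotonicity from Proposition~\ref{prop:finale} together with the jump-size-$2$ structure from Claim~\ref{clm:countjumps} (and its proof) to conclude that the step functions $\countprootstime{t}$ and $\countpzrootstime{t}$ attain every odd value between their endpoints. Your write-up is more explicit about finiteness of the discontinuities and the transversality underlying the exact jump size, but these are precisely the ingredients the paper packages into ``by Claim~\ref{clm:countjumps}, or its proof.''
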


\begin{proof} 
By Claim~\ref{clm:countjumps}, or its proof, the jumps of $\countprootstime{t}$ and $\countpzrootstime{t}$ are of size $2$, and the points of discontnuity $t^*$ are where the equation~\eqref{eq:mainLateRevise} has pure imaginary roots.  There are $\mu_+ = \frac{1}{2} \left[(2 \cvecgeom + 1) - \countproots{\cvec} \right]$ point of discontinuity, by Lemma~\ref{lem:oddcount}, named $\bracepair*{\eta_j}_{j = 1}^{\mu_+}$, and 
\begin{equation}
\countprootstime{t} = \omega_j = 2 p_j + 1, \quad \eta_j < t \leq \eta_{j + 1}, \quad p_{j + 1} = p_j + 1, \text{ or } 0 < \tau \leq \eta_1.  
\end{equation}
\begin{equation}
\countpzrootstime{t} = \omega_j, \quad \eta_j \leq t < \eta_{j + 1}.  
\end{equation}
These facts on the structure of the functions $\countprootstime{t}, \countpzrootstime{t}$ imply (i), (ii).
\end{proof}

Since by Proposition~\ref{prop:countoneachieved}, we know for all $(n, \cvecgeom)$ pairs with $n \in \NN$, $\cvecgeom \in (0, 1)$, there exists $\cvec_0 \in (\rplus)^n$ with geometric mean $\cvecgeom$ and $\countproots{\cvec_0} = \countpzroots{\cvec_0} = 1$, we may apply Corollary~\ref{cor:nuInterp} to such a $\cvec_0$ and achieve all positive odd values less than the maximum.  This proves the following.  

\begin{prop} \label{prop:allvals}
Fix $n \in \NN$, and $\cvecgeom \in (0, 1)$.  Then:
\begin{enumerate}[label = (\roman*)]
\item for all odd $k$, $1 \leq k \leq 2 \kappa_+ + 1$, there exists $\cvec \in (\rplus)^n$ with  $\countproots{\cvec} = k$.
\item for all odd $\ell$, $1 \leq \ell \leq 2 \overline{\kappa} + 1$, there exists $\cvec \in (\rplus)^n$ with $\countpzroots{\cvec} = \ell$.
\end{enumerate}
\end{prop}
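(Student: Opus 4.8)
The plan is to assemble the two substantial facts already in hand: a multiset realizing the \emph{minimal} count (Proposition~\ref{prop:countoneachieved}) and the monotone interpolation along the path of Sections~\ref{sec:ctod}--\ref{sec:movement} (Corollary~\ref{cor:nuInterp}). First I would invoke Proposition~\ref{prop:countoneachieved}, which for the given pair $(n, \cvecgeom)$ with $\cvecgeom \in (0,1)$ produces a vector $\cvec_0 \in (\rplus)^n$ of geometric mean $\cvecgeom$ with $\countproots{\cvec_0} = \countpzroots{\cvec_0} = 1$. Let $\cvecmin$ and $\cvecmax$ denote the smallest and largest entries of $\cvec_0$. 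Then the path $\cvec(t)\colon [0,T] \to \cspace{\cvecgeom}{\cvecmin}{\cvecmax}$ furnished by Proposition~\ref{prop:finale} satisfies $\cvec(0) = \cvec_0$ and $\cvec(T) = \cvecideal$, so, in the notation \eqref{eq:omegatdef}, $\countprootstime{0} = 1$, $\countprootstime{T} = \countproots{\cvecideal} = 2\kappa_+ + 1$, and likewise $\countpzrootstime{0} = 1$, $\countpzrootstime{T} = 2\overline{\kappa} + 1$ by \eqref{eq:kappaforms}.

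Next I would apply Corollary~\ref{cor:nuInterp} with this $\cvec_0$. Part (i) gives, for every odd $k$ with $1 \le k \le 2\kappa_+ + 1$, a time $t(k) \in [0,T]$ with $\countprootstime{t(k)} = k$; taking $\cvec \declare \cvec(t(k))$ then yields a vector in $(\rplus)^n$ (in fact in $\cspace{\cvecgeom}{\cvecmin}{\cvecmax}$, so of geometric mean $\cvecgeom$, though only membership in $(\rplus)^n$ is asserted in the statement) with $\countproots{\cvec} = k$. This proves (i). Part (ii) of the same corollary, applied identically, produces for each odd $\ell$ with $1 \le \ell \le 2\overline{\kappa} + 1$ a vector $\cvec = \cvec(\overline{t}(\ell))$ with $\countpzroots{\cvec} = \ell$, which is (ii).

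The only thing requiring care — and it is already dispatched in the proof of Corollary~\ref{cor:nuInterp} through Claim~\ref{clm:countjumps} — is that the step functions $t \mapsto \countprootstime{t}$ and $t \mapsto \countpzrootstime{t}$ take only odd values, have finitely many discontinuities (at the $t^*$ for which $P(z;\cvec(t^*)) - 1$ has pure imaginary roots), and jump by exactly $\pm 2$ there; combined with monotonicity from Proposition~\ref{prop:finale}\ref{enum:everincr}, a function climbing from $1$ to $2\kappa_+ + 1$ through such jumps must attain every intermediate odd value. I do not anticipate a genuine obstacle in this final proposition: the real work was the construction of the path and the analysis of the jumps in Section~\ref{sec:movement}. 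I would also note the degenerate situation $2\kappa_+ + 1 = 1$ (e.g.\ $n \le 4$, as remarked after Proposition~\ref{prop:countoneachieved}), in which the claim reduces to the already-known existence of a vector with exactly one root of positive real part, realized for instance by $\cvecideal$ itself, so nothing further is needed.
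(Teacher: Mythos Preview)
Your proposal is correct and follows exactly the paper's approach: the paper's proof consists of the single sentence preceding the proposition, which invokes Proposition~\ref{prop:countoneachieved} to obtain a starting vector $\cvec_0$ with $\countproots{\cvec_0} = \countpzroots{\cvec_0} = 1$ and then applies Corollary~\ref{cor:nuInterp} to hit every odd value up to the maximum. Your added remarks on the jump structure and the degenerate case $2\kappa_+ + 1 = 1$ are accurate elaborations, not departures.
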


In the context of doubly cyclic matrices, we have the following.

\begin{prop} \label{prop:allvalsMatrices}
Fix $n \in \NN$, and $0 < \alpha < \beta < 1$.  Then:
\begin{enumerate}[label = (\roman*)]
\item for all odd $k$, $1 \leq k \leq 2 \kappa_+ + 1$, there exists $X \in DC(\alpha, \beta)$ with $k$ roots in the open left half-plane  with  $\countproots{\cvec} = k$.
\item  for all odd $\ell$, $1 \leq \ell \leq 2 \overline{\kappa} + 1$, there exists $X \in DC(\alpha, \beta)$ with $\ell$ roots in the closed left half-plane.
\end{enumerate}
\end{prop}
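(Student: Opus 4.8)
The plan is to transport Proposition~\ref{prop:allvals} through the reduction already carried out in the introduction. Recall that for $X\in DC(\alpha,\beta)$, conjugation by the diagonal matrix $Q$ of \eqref{eq:qdef} followed by extraction of the scalar $\beta>0$ turns $X$ into $\beta\widetilde X$, where $\widetilde X$ depends only on $\cvec=(c_1,\dots,c_n)\in(\rplus)^n$, $c_k=a_k/\beta$, with geometric mean $\cvecgeom=\alpha/\beta$ by \eqref{eq:cvecgeommean}; since $0<\alpha<\beta$ we are in the regime $0<\cvecgeom<1$. Multiplication by $\beta$ does not change the signs of the real parts of the spectrum, and the substitution $z=-\lambda$ converts the characteristic equation of $\widetilde X$ into $P(z;\cvec)=1$ of \eqref{eq:maineqn}. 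Hence the number of eigenvalues of $X$ in the open (resp.\ closed) left half-plane is exactly $\countproots{\cvec}$ (resp.\ $\countpzroots{\cvec}$), counted with multiplicity.

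The first step I would make precise is that this correspondence is \emph{surjective}: given any $\cvec=(c_1,\dots,c_n)\in(\rplus)^n$ with geometric mean $\cvecgeom=\alpha/\beta$, set $a_k=\beta c_k$ and $b_k=\beta$ for all $k$; then $\mathbf a,\mathbf b\in(\rplus)^n$ with $\prod b_k=\beta^n$ and $\prod a_k=\beta^n\prod c_k=\beta^n\cvecgeom^n=\alpha^n$, so the matrix $X$ of \eqref{eq:dczplusex} built from $\mathbf a,\mathbf b$ lies in $DC(\alpha,\beta)$ and reduces exactly to the given $\cvec$. In particular $\beta$ (and the ancillary restriction $\beta<1$) is irrelevant: only $\cvecgeom$ matters. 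The choice $\cvec=\cvecideal$ is realized by $X=\alpha I-\beta\permmatspecial$, so by \eqref{eq:kappaforms} the numbers of eigenvalues of $\alpha I-\beta\permmatspecial$ in the open and closed left half-plane are $2\kappa_++1$ and $2\overline{\kappa}+1$, respectively, which are also the maxima over $DC(\alpha,\beta)$ by Theorem~\ref{thm:maincount} and Theorem~\ref{thm:eigencounts}.

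Then the conclusion is immediate: for item (i), fix an odd $k$ with $1\le k\le 2\kappa_++1$; Proposition~\ref{prop:allvals}(i) (applied with this $\cvecgeom$) gives $\cvec\in(\rplus)^n$ of geometric mean $\cvecgeom$ with $\countproots{\cvec}=k$, and pulling $\cvec$ back to a matrix $X\in DC(\alpha,\beta)$ as above produces a matrix with exactly $k$ eigenvalues in the open left half-plane. Item (ii) is identical, using Proposition~\ref{prop:allvals}(ii) together with $\countpzroots{\cvec}$ and the closed half-plane. There is no analytic obstacle remaining — the work is done in Proposition~\ref{prop:allvals}; the only thing requiring genuine care is the bookkeeping of the reduction, namely that the conjugation-and-rescaling is a spectrum-faithful bijection between $DC(\alpha,\beta)$ and $\{\cvec:\text{geometric mean }=\alpha/\beta\}$, that it respects multiplicities, and that the sign flip $z=-\lambda$ correctly identifies ``$\Re\lambda<0$'' with ``$\Re z>0$'' and ``$\Re\lambda\le 0$'' with ``$\Re z\ge 0$''.
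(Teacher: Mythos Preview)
Your proposal is correct and follows exactly the paper's approach: Proposition~\ref{prop:allvalsMatrices} is stated immediately after Proposition~\ref{prop:allvals} as its direct translation to the matrix setting via the reduction \eqref{eq:qdef}--\eqref{eq:cvecgeommean} from the introduction, and your argument makes that translation explicit. The one minor quibble is the word ``bijection'' near the end---the map $X\mapsto\cvec$ is many-to-one since the $b_k$'s are normalized away---but your argument only uses surjectivity (given $\cvec$, set $a_k=\beta c_k$, $b_k=\beta$), which you establish correctly.
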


Since by Lemma~\ref{lem:oddcount} and Theorem~\ref{thm:maincount}, $\countproots{\cvec}$ (respectively, $\countpzroots{\cvec}$) is odd and less than $\countproots{\cvecideal}$ (respectively, $\countpzroots{\cvecideal}$), the range is no larger than that demonstrated in Proposition~\ref{prop:allvals}, so this completes the proof of the $\cvecgeom < 1$ case of Theorem~\ref{thm:polyCountRangeFull}; the $\cvecgeom \geq 1$ case was handled at the beginning of Section~\ref{sec:tech}.  Similarly, we have proven Theorem~\ref{thm:matrixCountRangeFull}

\section{Further Comments} \label{sec:furtherOne}
In the proof of the main theorem we used the localization
\begin{equation} \label{eq:basicannboundsredo}
w \in \ann{d}{1}, \quad d = \frac{1 - \cvecgeom^n}{(1 + \cvecmax)^n},
\end{equation}
of roots of \eqref{eq:maineqn} in the right half-plane $E$, or in $\ourbox \in \eqref{eq:ourboxdef}$.  We want now to improve the upper bound $1$ in \eqref{eq:basicannboundsredo} and make explicit the dependence on $\cvecgeom = \left( \prod_{k = 1}^n c_j \right)^{1/n}$.  As in \eqref{eq:absabovebasic},
\begin{equation} \label{eq:absaboverev} 
\begin{split}
1 &= \prod_{j = 1}^n \left[ (x^2 + y^2) + 2x c_j + c_j^2\right] \\
& > \prod_{j = 1}^n (\abs{z}^2 + c_j^2) = \abs{z}^{2n} \prod_{j = 1}^n \left( 1 + \left(\frac{c_j}{\abs{z}}\right)^2 \right)\\
& \geq \abs{z}^{2n} \left(1 + \frac{\cvecgeom^2}{\abs{z}^2}\right)^n \geq \left( \abs{z}^2 + \cvecgeom^2\right)^n ,
\end{split}
\end{equation}
so 
\begin{equation}
1 - \cvecgeom^2 > \abs{z}^2,
\end{equation}
and by $\cvecgeom < 1$,
\begin{equation} \label{eq:UBimproved}
\abs{z}^2 \leq 2 (1 - \cvecgeom), \quad \abs{z} \leq \frac{3}{2} (1 - \cvecgeom)^{1/2}.
\end{equation}

The last inequality in \eqref{eq:absaboverev} follows from 
\begin{equation} \label{eq:geommeanboundbelow}
\prod_{k = 1}^n (1 + t_k) \geq (1 + T)^n, \quad T^n = \prod_{k = 1}^n t_k, \quad \text{if } t_k \geq 0, \quad 1 \leq k \leq n,
\end{equation}
with strict inequality unless all $t_k$ are equal.  See \cite[{\#}64, p. 61]{HLP52}.

The lower bound $d \in \eqref{eq:basicannboundsredo}$ can be improved also.  Notice that
\begin{equation}
2ab \leq \omega^2 a^2 + \Omega^2 b^2, \quad \omega \Omega = 1, \quad a, b \geq 0, \quad 1 \geq \omega > 0,
\end{equation}
so for all $j$
\begin{equation}
2 x c_j \leq \Omega^2 x^2 + \omega^2 c_j^2,
\end{equation}
and
\begin{equation} \label{eq:lateexpansion}
\begin{split}
1 & = \prod_{j = 1}^n \left[ (x^2 + y^2) + 2x c_j + c_j^2\right] \\
 & \leq \prod_{j = 1}^n \left( y^2 + x^2 ( 1 + \Omega^2 ) + c_j^2 (1 + \omega^2) \right) = (1 + \omega^2)^n \cvecgeom^{2n} \prod_{j = 1}^n \left[ 1 + \frac{y^2 + (1 + \Omega^2) x^2}{1 + \omega^2} \frac{1}{c_j^2} \right].
 \end{split}
\end{equation}
Notice that 
\begin{equation} \label{eq:logbounds}
\log(1 + u) < u, \, \, \text{ for all } \, \, u > 0, \quad \text{ and }\log(1 + u) \geq \frac{3}{4} u, \, \, \text{ if } \, \, 0 < u \leq \frac{1}{3}.  
\end{equation}
Then taking logarithms of both sides in \eqref{eq:lateexpansion},
\begin{equation}
0 \leq n \left[ \log ( (1 + \omega^2) \cvecgeom^2) + \frac{1}{n} \sum_{j = 1^n} \log \left( 1 + \frac{y^2 + (1 + \Omega^2)x^2}{1 + \omega^2} \frac{1}{c_j^2} \right) \right].
\end{equation}
and
\begin{equation} \label{eq:lbstarterkit}
\log \frac{1}{\gamma^2(1 + \omega^2)} \leq \frac{y^2 + (1 + \Omega^2)x^2}{(1 + \omega^2)} L^2; \quad L^2 = \frac{1}{n} \left( \sum_{j = 1}^n \frac{1}{c_j^2} \right).
\end{equation}
With $\cvecgeom^2 < 1$ choose $\omega > 0$ by
\begin{equation}
(1 + \omega^2)\cvecgeom^2 = \frac{1}{2} \left( 1 + \cvecgeom^2 \right);
\end{equation}
then 
\begin{equation}
\frac{1 + \Omega^2}{1 + \omega^2} = \frac{1}{\omega^2} = \frac{2\gamma^2}{1 - \cvecgeom^2}= \frac{2\gamma^2}{1 + \gamma} \cdot \frac{1}{1 - \gamma} < \frac{1}{1- \cvecgeom},
\end{equation}
and \eqref{eq:lbstarterkit} becomes
\begin{equation} \label{eq:lbcontinued}
\log \frac{2}{(1 + \cvecgeom^2)} \leq \left( y^2 + \frac{x^2}{1 - \cvecgeom} \right) L^2, \quad \text{ where } L^2  \leq \frac{1}{\cvecmin^2}.
\end{equation}
But $\frac{2}{1 + \cvecgeom^2} = 1 + \frac{1 - \cvecgeom^2}{1 + \cvecgeom^2}$, and if $\cvecgeom \geq \frac{1}{2}$, \, by \eqref{eq:logbounds}, 
\begin{equation}
\log \frac{2}{1 + \cvecgeom^2} \geq \frac{3}{4} \cdot \frac{1 - \cvecgeom^2}{1 + \cvecgeom^2} \geq \frac{3}{4}  (1 - \cvecgeom)^2,
\end{equation}
and by \eqref{eq:lbcontinued},
\begin{equation}
\frac{3}{4}(1 - \cvecgeom) \leq \left[ y^2 + \frac{x^2}{1 - \cvecgeom} \right] L^2,
\end{equation}
and
\begin{equation}
 \frac{3}{4}\cvecmin^2 \leq \frac{x^2}{(1 - \cvecgeom)^2} + \frac{y^2}{1 - \cvecgeom}. 
\end{equation}
It implies with $0 < \cvecgeom < 1$ that
\begin{equation}
 \frac{3}{4}\cvecmin^2 \leq \frac{x^2 + y^2}{(1 - \cvecgeom)^2}.
\end{equation}
Therefore (in conjunction with \eqref{eq:UBimproved}), as an analogue or an improvement of Lemmas~\ref{lem:basicUB}, \ref{lem:zerofreedisk} and Corollary~\ref{cor:zerocontainmentannulus}, we can state the following.
\begin{bigclm} \label{clm:boundsimprovement}
If $\cvec \in \cspace{\cvecgeom}{\cvecmin}{\cvecmax}$, $1 > \cvecgeom \geq \frac{1}{2}$, and $z$ is a root of \eqref{eq:maineqn} in the right half-plane, then
\begin{enumerate}[label = (\roman*)]
\item $z$ lies outside of the ellipsoid 
\begin{equation}
\frac{x^2}{(1 - \cvecgeom)^2} + \frac{y^2}{1 - \cvecgeom} < \frac{3}{4} \cvecmin^2,
\end{equation}
or
\item in a weaker claim,
\begin{equation}
\frac{4}{5} \cvecmin (1 - \cvecgeom) \leq \abs{z}.
\end{equation}
\end{enumerate}
Therefore $z \in \ann{ \frac{4}{5} \cvecmin (1 - \cvecgeom)}{\frac{3}{2} (1 - \cvecgeom)^{1/2}}$.
\end{bigclm}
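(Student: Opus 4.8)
The plan is to derive a two-sided bound on $\abs{z}$ for an arbitrary root $z = x + iy$ of \eqref{eq:maineqn} lying in the right half-plane ($x \geq 0$), by bracketing the quantity $\prod_{j=1}^n\bigl((x+c_j)^2 + y^2\bigr)$ --- which equals $1$ on such a root by \eqref{eq:maineqnabstwo} --- between two expressions built from weighted arithmetic--geometric-mean inequalities, and then reading off the annulus $\ann{\frac{4}{5}\cvecmin(1-\cvecgeom)}{\frac{3}{2}(1-\cvecgeom)^{1/2}}$ from the resulting constraints on $x$ and $y$.

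For the outer radius I would use only that $(x+c_j)^2+y^2 = \abs{z}^2 + 2xc_j + c_j^2 \geq \abs{z}^2 + c_j^2$ when $x\geq 0$, so that $1 \geq \prod_j(\abs{z}^2 + c_j^2) = \abs{z}^{2n}\prod_j\bigl(1 + c_j^2/\abs{z}^2\bigr) \geq (\abs{z}^2 + \cvecgeom^2)^n$, the last step being the superadditive form of AM--GM recorded in \eqref{eq:geommeanboundbelow}, applied with $t_k = c_k^2/\abs{z}^2$ and $T = \cvecgeom^2/\abs{z}^2$. This forces $\abs{z}^2 \leq 1 - \cvecgeom^2 \leq 2(1-\cvecgeom)$, i.e. the upper bound of \eqref{eq:UBimproved}.

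For the inner radius the key device is the elementary inequality $2ab \leq \Omega^2 a^2 + \omega^2 b^2$ for $a,b\geq 0$ with $\omega\Omega = 1$, $0 < \omega \leq 1$, used with $a = x$, $b = c_j$ to get $(x+c_j)^2+y^2 \leq y^2 + (1+\Omega^2)x^2 + (1+\omega^2)c_j^2$. I would then multiply over $j$, factor out $(1+\omega^2)^n\cvecgeom^{2n}$, take logarithms, apply $\log(1+u) < u$ to each factor, and arrive at
\[
\log\frac{1}{\cvecgeom^2(1+\omega^2)} \leq \frac{y^2 + (1+\Omega^2)x^2}{1+\omega^2}\,L^2, \qquad \text{where } \ L^2 = \frac{1}{n}\sum_{j=1}^n\frac{1}{c_j^2} \leq \frac{1}{\cvecmin^2}.
\]
The decisive move is to choose the free parameter so that $(1+\omega^2)\cvecgeom^2 = \frac{1}{2}(1+\cvecgeom^2)$; this turns the left side into $\log\frac{2}{1+\cvecgeom^2}$ and makes $\frac{1+\Omega^2}{1+\omega^2} = \omega^{-2} = \frac{2\cvecgeom^2}{1-\cvecgeom^2} < \frac{1}{1-\cvecgeom}$. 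Then the hypothesis $\cvecgeom \geq \frac{1}{2}$, together with the lower estimate $\log(1+u)\geq\frac{3}{4}u$ from \eqref{eq:logbounds} and the elementary inequality $\frac{1-\cvecgeom^2}{1+\cvecgeom^2}\geq(1-\cvecgeom)^2$, gives $\frac{3}{4}(1-\cvecgeom)^2 \leq \left(y^2 + \frac{x^2}{1-\cvecgeom}\right)L^2$; dividing by $1-\cvecgeom$ and using $L^2 \leq \cvecmin^{-2}$ produces part (i), the exterior of the ellipsoid $\frac{x^2}{(1-\cvecgeom)^2} + \frac{y^2}{1-\cvecgeom} < \frac{3}{4}\cvecmin^2$. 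Part (ii) then follows at once, since $0 < (1-\cvecgeom)^2 \leq 1-\cvecgeom$ makes the left side of the ellipsoid inequality no larger than $\abs{z}^2/(1-\cvecgeom)^2$, whence $\abs{z} \geq \frac{\sqrt{3}}{2}\cvecmin(1-\cvecgeom) \geq \frac{4}{5}\cvecmin(1-\cvecgeom)$; combining this with \eqref{eq:UBimproved} gives the stated annulus.

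The AM--GM expansions and the logarithm bookkeeping are routine; the one point that needs genuine care is the tuning of $\omega$, which must be chosen so that \emph{simultaneously} the constant term $\log\frac{1}{\cvecgeom^2(1+\omega^2)}$ stays bounded below by a multiple of $(1-\cvecgeom)^2$ and the coefficient $\omega^{-2}$ of $x^2$ stays below $\frac{1}{1-\cvecgeom}$, and it is the lower bound on $\cvecgeom$ that keeps the argument of the logarithm in the range where the one-sided estimate $\log(1+u)\geq\frac{3}{4}u$ is available.
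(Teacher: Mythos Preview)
Your route is exactly the paper's, and the upper bound and the $\omega$-tuning are carried out correctly. There is, however, one quantitative slip in the lower-bound step that leaves you a factor of $(1-\cvecgeom)$ short of part~(i). You invoke the inequality $\dfrac{1-\cvecgeom^2}{1+\cvecgeom^2}\geq (1-\cvecgeom)^2$, which is true but too weak: it gives only
\[
\tfrac{3}{4}(1-\cvecgeom)^2 \;\leq\; \Bigl(y^2 + \tfrac{x^2}{1-\cvecgeom}\Bigr)L^2,
\]
and after dividing by $(1-\cvecgeom)$ and using $L^2\leq \cvecmin^{-2}$ you obtain only
\[
\tfrac{3}{4}(1-\cvecgeom)\,\cvecmin^2 \;\leq\; \frac{x^2}{(1-\cvecgeom)^2}+\frac{y^2}{1-\cvecgeom},
\]
which is strictly weaker than the ellipsoid bound in~(i). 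The fix is to use the sharper (and equally elementary) estimate
\[
\frac{1-\cvecgeom^2}{1+\cvecgeom^2}
= (1-\cvecgeom)\cdot\frac{1+\cvecgeom}{1+\cvecgeom^2}
\;\geq\; 1-\cvecgeom,
\]
valid since $1+\cvecgeom\geq 1+\cvecgeom^2$ for $0<\cvecgeom<1$. This puts $\tfrac{3}{4}(1-\cvecgeom)$ on the left instead of $\tfrac{3}{4}(1-\cvecgeom)^2$, and then your division by $(1-\cvecgeom)$ together with $L^2\leq\cvecmin^{-2}$ gives precisely $\tfrac{3}{4}\cvecmin^{2}\leq \tfrac{x^2}{(1-\cvecgeom)^2}+\tfrac{y^2}{1-\cvecgeom}$, i.e.\ part~(i); your derivation of~(ii) from~(i) is then correct as written.

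A minor side remark: citing \eqref{eq:logbounds} for $\log(1+u)\geq\tfrac{3}{4}u$ is slightly loose, since for $\cvecgeom$ close to $\tfrac12$ the relevant $u=\tfrac{1-\cvecgeom^2}{1+\cvecgeom^2}$ can be as large as $\tfrac{3}{5}>\tfrac{1}{3}$. The inequality $\log(1+u)\geq\tfrac{3}{4}u$ does in fact hold on $(0,\tfrac{3}{5}]$ (the function $\log(1+u)-\tfrac{3}{4}u$ is positive there), so the conclusion stands, but strictly speaking the stated range in \eqref{eq:logbounds} does not cover the whole hypothesis $\cvecgeom\geq\tfrac12$.
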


A slight advantage over Corollary~\ref{cor:zerocontainmentannulus} and Lemma~\ref{lem:zerofreedisk} is that there is no $n$ in Claim~\ref{clm:boundsimprovement}, at least explicitly.  Speaking loosely, we can say that the area of localization changes continuously when $\cvecgeom$ goes from $\cvecgeom > 1$ to $\cvecgeom < 1$.

\appendix

\section{Proof of Inequalities \eqref{eq:geommeanboundbelow}} \label{sec:furthertwo}  To make our paper self-contained, we will explain the inequality \eqref{eq:geommeanboundbelow}.  

Let us consider the elementary symmetric polynomials 
\begin{equation} \label{eq:sympolydef}
\sigma_{k}(\mathbf{x}) = \sum_{\substack{K \subset \setn \\ |K| = k}} X(K); \quad X(K) =  \prod_{j \in K} x_j, \quad \mathbf{x} \in \RR^n,\quad \setn \in \eqref{eq:klist}
\end{equation}
and the averages
\begin{equation} \label{eq:symavgdef}
S_k(\mathbf{x}) = \frac{\sigma_k(\mathbf{x})}{\binom{n}{k}} , \quad \binom{n}{k} = \# \bracepair*{K \subset \setn : |K| = k}.
\end{equation}
C. Maclaurin, in 1729, in \cite{Macl}, proved a chain of inequalities in the case $\mathbf{x} \in (\rplus)^n$: 
\begin{equation} \label{eq:MacIneq}
S_1(\mathbf{x}) \geq S_2^{1/2}(\mathbf{x}) \geq \dotsc \geq S_k^{1/k}(\mathbf{x}) \geq \dotsc \geq S_n^{1/n}(\mathbf{x}) = g(\mathbf{x}) \declare \left(\prod_{k = 1}^n x_k \right)^{1/n},
\end{equation}
with strict inequality (at least once) if and only $x_j \neq x_k$ for some $j, k \in \setn$, $j \neq k$.  To explain \eqref{eq:geommeanboundbelow} we need just the individual inequalities
\begin{equation} \label{eq:Macsimple}
S_k^{1/k}(\mathbf{x}) \geq g(\mathbf{x}), \quad 1 \leq k < n.
\end{equation}
For $\mathbf{x} \in (\rplus)^n$,
\begin{equation}
\begin{split}
\prod_{j = 1}^n (1 + x_j) &= \sum_{k = 0}^n \sigma_k(\mathbf{x})  = \sum_{k = 0}^n \binom{n}{k} S_k(\mathbf{x}) \\
&\geq \sum_{k = 0}^n \binom{n}{k} [g(\mathbf{x})]^k = (1 + g(\mathbf{x}))^n.
\end{split}
\end{equation}
To prove \eqref{eq:Macsimple}, let us notice that by \eqref{eq:symavgdef}, $S_k$ is an arithmetic mean of $\binom{n}{k}$ positive numbers $X(K)$, but their product is a homogeneous polynomial 
\[
\prod_K X(K) = \left( \prod_{j = 1}^n x_j \right)^G,
\]
of degree $nG = k \binom{n}{k}$, so the AM-GM inequality implies
\begin{equation}
S_k(\mathbf{x}) \geq \left[ \left([g(\mathbf{x})]^n\right)^G \right]^{\binom{n}{k}^{-1}} = g(\mathbf{x})^k.
\end{equation}
We have proven \eqref{eq:Macsimple}.

See more on the Newton and Maclaurin Inequalities in \cite{NicuNewt}, \cite{NicuInterp}, and references therein.  

\section{Implicit Function Theorem} \label{sec:IFT}

Of course, the Implicit Function Theorem is well-known (see, e.g., \cite[Thm. 9.28, pp. 224]{BabyRudin} or \cite[Thm. 7.6, p. 34]{FrGr}), but we use a version with explicit lower bounds on the neighborhoods of validity, so we give the full details below.  For a convex, closed, bounded set $V \subseteq \CC$, put for $0 < \rho < 1$ the $\rho$-neighborhood of $V$,
\begin{equation} \label{eq:rhonbhddef}
V(\rho) = \bracepair{z \in \CC: \abs{z - v} \leq \rho \text{ for some }v \in V}.
\end{equation}
Let $F(z, t)$ be an analytic function of two variables in 
\begin{equation} \label{eq:dualnbhddef}
\mathcal{G} = V(\rho) \times J(\rho), \quad J = [a,b] \in \RR,
\end{equation}
where $J(\rho)$ is the $\CC$-neighborhood of $J$ as in \eqref{eq:rhonbhddef}.  Assume that
\begin{equation} \label{eq:zerosetdef}
Z = \bracepair{(z, t) \in V \times J: F(z, t) = 0}
\end{equation}
is not empty, and
\begin{equation} \label{eq:genOmegadef}
0 < \Omega = \min \bracepair*{ \abs{\frac{\partial F}{\partial z}(z, t)}: (z, t) \in Z}.
\end{equation}
Put
\begin{subequations}
\begin{align}
M_1 &= \max \bracepair*{\abs{\frac{\partial F}{\partial z}(z, t)} + \abs{\frac{\partial F}{\partial t}(z, t)}: (z, t) \in \mathcal{G}} \label{eq:mOnedef}\\
\intertext{and}
M_2 & = \max \bracepair*{\abs{\frac{\partial^2 F}{\partial z^2}(z, t)} + \abs{\frac{\partial^2 F}{\partial z \partial t}(z, t)} + \abs{\frac{\partial^2 F}{\partial t^2}(z, t)}: (z, t) \in \mathcal{G}} \label{eq:mTwodef}
\end{align}
\end{subequations}

Now, choose and fix $\kappa$, $r$ such that
\begin{equation}
0 < \kappa \leq \min \bracepair*{\rho, \frac{\Omega}{8M_2}} \label{eq:kappadef}
\end{equation}
and
\begin{equation} \label{eq:rdef}
0 < r \leq \min \bracepair*{\kappa \cdot \frac{\Omega}{8(M_1 + M_2)}, \rho}
\end{equation}

\begin{bigclm} \label{clm:IFT}
Under the assumptions and notation \eqref{eq:rhonbhddef} -- \eqref{eq:rdef}, if
\begin{equation} \label{eq:startzero}
* = (z_0, t_0) \in V \times J \quad \text{and} \quad F(z_0, t_0) = 0
\end{equation}
then there exists a unique continuous function $z(t)$ in the closed disc
\begin{equation} \label{eq:tSolnnbhd}
\rdisk{r}(t_0) = \bracepair{t \in \CC: \abs{t - t_0} \leq r},
\end{equation}
analytic in the open disk $\overset{\circ}{\rdisk{r}}(t_0)$, such that for $t \in \rdisk{r}$,
\begin{subequations}
\begin{align}
\abs{z(t) - z_0} \leq \kappa,\label{eq:zSolnnbhd}\\
F(z(t), t) = 0. \label{eq:eqsolnhere} 
\end{align}
\end{subequations}
Moreover, for $t \in \overset{\circ}{\rdisk{r}}(t_0)$
\begin{equation} \label{eq:fracder}
\dot{z}(t) = - \, \frac{\partial F}{\partial t} /\frac{\partial F}{\partial z} 
\end{equation}
\end{bigclm}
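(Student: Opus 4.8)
The plan is to recast $F(z,t)=0$ as a fixed-point equation, apply the Banach contraction principle with all constants tracked explicitly, and then upgrade the resulting continuous solution branch to a holomorphic one.

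Fix $* = (z_0,t_0)$ as in \eqref{eq:startzero}. Since $(z_0,t_0) \in Z$, \eqref{eq:genOmegadef} supplies $A \declare \frac{\partial F}{\partial z}(z_0,t_0)$ with $|A| \geq \Omega > 0$. For each $t$ with $|t - t_0| \leq r$ I would consider the holomorphic map
\[
g_t(z) = z - A^{-1} F(z,t),
\]
whose fixed points in $z$ are precisely the zeros of $F(\cdot,t)$. Because $V$ and $J$ are convex, $V(\rho)$ and $J(\rho)$ are convex, and $\kappa, r \leq \rho$, the closed set $\{|z - z_0| \leq \kappa\} \times \{|t - t_0| \leq r\}$ lies in $\mathcal{G}$, so the bounds \eqref{eq:mOnedef}, \eqref{eq:mTwodef} apply there; also broken segments such as $(z_0,t_0) \to (z,t_0) \to (z,t)$ stay in $\mathcal{G}$, which legitimizes the derivative estimates below.

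First I would estimate $g_t'(z) = A^{-1}\bigl(\frac{\partial F}{\partial z}(z_0,t_0) - \frac{\partial F}{\partial z}(z,t)\bigr)$: integrating $\frac{\partial^2 F}{\partial z^2}$ and $\frac{\partial^2 F}{\partial z\partial t}$ along the broken segment and using \eqref{eq:mTwodef} gives $\bigl|\frac{\partial F}{\partial z}(z,t) - \frac{\partial F}{\partial z}(z_0,t_0)\bigr| \leq M_2(|z-z_0| + |t-t_0|) \leq 2M_2\kappa \leq \Omega/4$ by \eqref{eq:kappadef} together with $r \leq \kappa$ (which holds since $M_1 \geq \Omega$); hence $|g_t'| \leq \tfrac14$ on $\{|z-z_0|\leq\kappa\}$, so $g_t$ is a $\tfrac14$-contraction there. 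Next, $g_t(z_0) - z_0 = -A^{-1}F(z_0,t)$, and $F(z_0,t_0) = 0$, so \eqref{eq:mOnedef} gives $|F(z_0,t)| \leq M_1|t-t_0| \leq M_1 r$, whence $|g_t(z_0) - z_0| \leq M_1 r/\Omega \leq \tfrac34\kappa$ by \eqref{eq:rdef}; combined with the contraction bound, $g_t$ maps the closed disk $\{|z-z_0| \leq \kappa\}$ into itself. The contraction mapping theorem then produces, for each $t$ in the closed disk $\rdisk{r}(t_0)$, a unique $z(t)$ with $|z(t) - z_0| \leq \kappa$ and $F(z(t),t) = 0$ — this is the asserted unique continuous solution in the stated class — and the standard parameter estimate $|z(t) - z(s)| \leq \tfrac14|z(t)-z(s)| + \Omega^{-1}M_1|t-s|$ shows $z(\cdot)$ is Lipschitz, in particular continuous.

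Finally I would promote $z(\cdot)$ to a holomorphic function on $\overset{\circ}{\rdisk{r}}(t_0)$ and obtain \eqref{eq:fracder}. The contraction estimate also gives $\bigl|\frac{\partial F}{\partial z}(z(t),t)\bigr| \geq |A| - \Omega/4 \geq \tfrac34\Omega > 0$, so $z(t)$ is a simple zero of $F(\cdot,t)$; by Rouch\'e's theorem, for $t$ near any interior point $t_1$ the function $F(\cdot,t)$ has exactly one zero in a small disk about $z(t_1)$, so the residue representation $z(t) = \frac{1}{2\pi i}\oint_{|\zeta - z(t_1)| = r'} \zeta\,\frac{\partial_\zeta F(\zeta,t)}{F(\zeta,t)}\,d\zeta$ is valid; its right-hand side is holomorphic in $t$ (differentiate under the integral sign), so $z(\cdot)$ is holomorphic near $t_1$. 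Differentiating the identity $F(z(t),t) \equiv 0$ and dividing by $\frac{\partial F}{\partial z} \neq 0$ yields \eqref{eq:fracder}. The only slightly delicate points are the constant bookkeeping in the two self-map inequalities and confirming that the segments used in the second-derivative estimates remain in $\mathcal{G}$; both are settled by the convexity of $V(\rho)$ and $J(\rho)$ and by the defining choices \eqref{eq:kappadef}, \eqref{eq:rdef}.
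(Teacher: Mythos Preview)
Your argument is correct, and the underlying fixed-point map $z\mapsto z - A^{-1}F(z,t)$ is essentially the same as the paper's, but you organize the contraction differently. The paper applies the Banach fixed-point theorem \emph{once}, in the Banach space $\mathcal{A}(\rdisk{r}(0))$ of functions analytic on the open disk and continuous on the closure, equipped with the sup norm; the fixed point is then automatically an analytic function of $t$, so no separate analyticity upgrade is needed. You instead run the contraction in $\CC$ for each fixed $t$, obtain a Lipschitz dependence $t\mapsto z(t)$, and then promote to holomorphy via Rouch\'e and the residue representation $z(t)=\tfrac{1}{2\pi i}\oint \zeta\,\partial_\zeta F/F\,d\zeta$. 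Your route is a bit more hands-on but entirely elementary and yields the same quantitative radius; the paper's route is shorter once one thinks to work in the function space, and it makes the analyticity conclusion immediate. Your constant bookkeeping is fine (indeed $|g_t(z_0)-z_0|\le M_1 r/\Omega\le \kappa/8$, comfortably inside your claimed $3\kappa/4$), and your use of $r\le\kappa$ via $M_1\ge\Omega$ is legitimate.
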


\begin{proof}
With
\begin{equation} \label{eq:firstdersshorthand}
A = \left. \frac{\partial F}{\partial z} \right\vert_*, \quad B = \left. \frac{\partial F}{\partial t} \right\vert_*
\end{equation}
by the Taylor formula and \eqref{eq:startzero}
\begin{equation} \label{eq:TaylorDecomp}
F(z, t) = A(z - z_0) + B(t - t_0) + g(z, t), \quad g \text{ analytic in }V(\rho) \times J(\rho).
\end{equation}
Put
\begin{equation} \label{eq:errorform}
z = z_0 + \zeta, \quad t = t_0 + s;
\end{equation}
we want(see \eqref{eq:zSolnnbhd}) to find $\zeta(s) \in X$, where $X$ is the Banach space $(\mathcal{A}(\rdisk{r}(0)), \supnorm{\cdot})$ of functions analytic in the open disk $\overset{\circ}{\rdisk{r}}(0)$ and continuous on the closed disk $\rdisk{r}(0)$, such that
\begin{equation} \label{eq:idealTaylor}
0 = A \zeta(s) + Bs + G(\zeta(s), s), \quad s \in \rdisk{r},
\end{equation}
where
\begin{equation} \label{eq:remainderLocal}
G(\zeta, s) = F(z_0 + \zeta, t_0 + s) - A \zeta - B s, \zeta \in \rdisk{\rho}, \, s \in \rdisk{\rho}.
\end{equation}
Define in $X$ the mapping
\begin{equation} \label{eq:phidef}
\Phi: \xi(\cdot) \mapsto  - \frac{B}{A} s - \frac{1}{A} G(\xi(s), s),
\end{equation}
at least for functions with $\abs{\xi(s)} \leq \rho$ for all $s \in \rdisk{r}(0)$.  
The ball
\begin{equation} \label{eq:Knbhddef}
K(\kappa) = \bracepair{\xi \in \mathcal{A}(\rdisk{r}(0)): \abs{\xi(s)} \leq \kappa \text{ if } \abs{s} \leq r}
\end{equation}
is invariant under $\Phi$: indeed, by \eqref{eq:kappadef}, \eqref{eq:rdef}, if $\abs{s} \leq r \leq 1$, 
\begin{equation}
\begin{split}
\abs{\Phi[\xi](s)} &\leq \frac{\abs{B}}{\abs{A}} r + \frac{1}{\abs{A}} M_2(\kappa + r)^2 \leq \frac{1}{\Omega} \left[ M_1 r + 2 M_2 r^2 + 2 M_2 \kappa^2 \right]\\
& \leq \frac{1}{\Omega} \left[ r(M_1 + 2 M_2) + (2 M_2 \kappa) \cdot \kappa \right] \leq \left( \frac{1}{4} + \frac{1}{4} \right) \kappa = \frac{1}{2} \kappa.
\end{split}
\end{equation}
Moreover, on $K(\kappa)$ this mapping is contractive in the uniform norm: If $\xi(s)$, $\zeta(s) \in K(\kappa)$, then
\begin{equation}
\begin{split}
\Phi[\xi](s) - \Phi[\zeta](s) &= \frac{1}{A} \left( G(\zeta(s), s) - G(\xi(s), s) \right)\\
& =  \frac{1}{A}\left( F(z_0 + \zeta, t_0 + s) - A \zeta \right) - \left( F( z_0 + \zeta, t_0 + s) - A \xi \right)\\
& = \frac{1}{A} \int_0^1 \frac{d}{du} \left[ F(z_0 + \xi + u(\zeta - \xi), t_0 + s) - A(\xi + u(\zeta - \xi)) \right] \, du\\
& = \frac{L}{A}(\zeta - \xi),
\end{split}
\end{equation}
where
\begin{equation}
L = \int_0^1  \left[ \frac{\partial F}{\partial z}(z_0 + \xi + u(\zeta - \xi), t_0 + s) - \left. \frac{\partial F}{\partial z} \right\vert_* \right], 
\end{equation}
and $\abs{L} \leq M_2 \kappa$, so
\begin{equation}
\abs{\Phi[\xi](s) - \Phi[\zeta](s)} \leq \frac{M_2 \kappa}{\Omega} \supnorm{\zeta - \xi} \leq \frac{1}{2} \supnorm{\zeta - \xi};
\end{equation}
i.e., $\Phi$ is contractive on $(K(\kappa), \supnorm{\cdot})$.

By the Contractive Mapping Principle, we have a solution $\zeta(s)$ of the equation \eqref{eq:idealTaylor}, or the solution $z(t)$ of \eqref{eq:eqsolnhere}, $z(t_0) = t_0$, and \eqref{eq:zSolnnbhd}.  The solution of \eqref{eq:idealTaylor} is unique in $K(\kappa)$.

The form of the derivative \eqref{eq:fracder} follows from implicit differentiation. 
\end{proof}

\section{Condition for Positivity of Function} \label{sec:poscond}

Fix $a < b$ and let $h \in C^2[a, b]$ be a real-valued function.  Suppose that there exists a positive constant $\smallterm$ such that
\begin{subequations} \label{eq:startingstarterconds}
\begin{gather}
0 \leq h(a) \label{eq:startref}\\
\text{If }0 \leq h(c) \leq \smallterm, \text{ then } h^{\prime}(c) > 0. \label{eq:derrange}
\end{gather}
\end{subequations}

\begin{bigclm} \label{clm:posFirstForm}
If $h \in C^2[a, b]$ and $\smallterm > 0$ satisfies \eqref{eq:startingstarterconds}, then $h(x) > 0$ for all $x$ in $(a, b]$.
\end{bigclm}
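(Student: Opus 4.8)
The plan is to argue by contradiction. Suppose $h(x_1) \le 0$ for some $x_1 \in (a, b]$, and single out the first bad point (a point where $h \le 0$) strictly to the right of $a$: set $c = \inf\{x \in (a, x_1] : h(x) \le 0\}$. This set is nonempty (it contains $x_1$), so $c \in [a, x_1]$, and by minimality of $c$ every $x \in (a, c)$ satisfies $h(x) > 0$.

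I would first pin down the value $h(c)$. If $c > a$, a sequence of bad points decreasing to $c$ forces $h(c) \le 0$ by continuity, while $h(c) = \lim_{x \uparrow c} h(x) \ge 0$ because $h > 0$ on $(a, c)$; hence $h(c) = 0$. If instead $c = a$, a limit of bad points from the right gives $h(a) \le 0$, which combined with hypothesis \eqref{eq:startref} again yields $h(a) = 0$. In both cases $0 \le h(c) \le \smallterm$, so hypothesis \eqref{eq:derrange} applies at the point $c$ and gives $h'(c) > 0$.

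It remains to contradict this. If $c > a$: for $x \in (a, c)$ the difference quotient $\frac{h(c) - h(x)}{c - x} = \frac{-h(x)}{c - x}$ is strictly negative, so letting $x \uparrow c$ yields $h'(c) \le 0$, contradicting $h'(c) > 0$. If $c = a$: from $h(a) = 0$ and $h'(a) > 0$ we get $h(x) = h'(a)(x - a) + o(x - a) > 0$ on some interval $(a, a+\delta) \subseteq (a, x_1)$, which contradicts the fact that $c = a$ forces bad points to accumulate at $a$ from the right. Either way we reach a contradiction, so no such $x_1$ exists and $h(x) > 0$ throughout $(a, b]$.

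The argument uses only differentiability of $h$, so the $C^2$ hypothesis is not actually needed. I do not anticipate a genuine obstacle; the one subtlety worth flagging is that the infimum must be taken over the half-open interval $(a, x_1]$ rather than $[a, x_1]$: when $h(a) = 0$, the endpoint $a$ itself would belong to the bad set, and then the case $c = a$ would be vacuously consistent instead of contradictory. Excluding $a$ ensures that $c = a$ really does mean bad points accumulate at $a^+$, which is what clashes with the local positivity supplied by $h'(a) > 0$.
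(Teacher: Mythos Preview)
Your proof is correct and takes a genuinely different route from the paper's. The paper argues constructively: it defines
\[
\omega^* = \sup\{\omega \in (a,b] : 0 < h(x) < \smallterm \text{ for all } x \in (a,\omega)\},
\]
uses \eqref{eq:derrange} to see that $h$ is strictly increasing on $(a,\omega^*)$, and then---when $\omega^* < b$ and $h(\omega^*) = \smallterm$---runs a second contradiction argument (tracking the last time $h$ equals $\tfrac{3}{4}\smallterm$) to show $h \geq \tfrac{\smallterm}{2}$ on $[\omega^*, b]$. You instead isolate the first point in $(a,b]$ where $h \leq 0$ and read off a sign contradiction from the one-sided difference quotients there. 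Your argument is shorter and, as you observe, needs only differentiability rather than $C^2$. What the paper's longer route buys is quantitative: it actually shows $h(x) \geq \tfrac{\smallterm}{2}$ on $[\omega^*, b]$ (and the paper remarks this can be sharpened to $h \geq \smallterm$), a lower bound your contradiction does not supply---but this extra information is not used elsewhere, so for the claim as stated your approach is entirely adequate. Your closing remark about taking the infimum over $(a,x_1]$ rather than $[a,x_1]$ is exactly the right subtlety to flag.
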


\begin{proof}
If $h(a) < \smallterm$, we have by \eqref{eq:derrange} that $h^{\prime}(a) > 0$, so we have $\smallterm \geq h(x) > h(a)$ if $a \leq x \leq a + \rho$, $0 < \rho \ll 1$.  Define
\[
\omega^* = \sup \bracepair{\omega \in (a, b]: \text{ if } x \in (a, \omega), \text{ then } 0 < h(x) < \smallterm}
\]
$\omega^* \geq a + \rho > a$ by the above.  Since $h^{\prime}(x) > 0$ on $(a, \omega)$, $h(x) > h(a) \geq 0$ for all $x$ in $(a, \omega]$, and we are done; also, either $\omega^* = b$, or $\omega^* < b$ and $h(\omega^*) = \smallterm$.  In the former case, $h(x) > h(a) $ for all $x$ in $(a, b]$.  In the latter case, i.e., $\omega^* < b$, we claim that 
\begin{equation} \label{eq:hminright}
h(x) \geq \frac{\smallterm}{2} \text{ if }x \in (\omega^*, b].
\end{equation} 
Otherwise, for some $y \in (\omega^*, b]$, 
\[
h(y) \leq \frac{\smallterm}{2}.
\]
Then the set
\[
T = \bracepair*{t \in [\omega^*, y]: h(t) = \frac{3}{4} \smallterm}
\]
is not empty, and closed; therefore, it contains $t^* = \sup T$, and $\omega^* < t^* < y$.  For $t$, $t^* < t < y$, $h(t) \leq \frac{3}{4} \smallterm = h(t^*)$, so $h^{\prime}(t) >0 $ if $t^* < t < y$.  In particular, letting $k = h^{\prime}(t^*) >0 $, by $h \in C^1$ there exists a small interval $[t^*, t^{\dagger}]$ with $h^{\prime} \geq \frac{k}{2}$ on $[t^*, t^{\dagger}]$.  Therefore, 
\[
0 > - \, \frac{1}{4} \smallterm \geq h(y) - h(t^*) = \int_{t^*}^y h^{\prime}(u) \, du > \int_{t^*}^{t^{\dagger}} k \, dt + \int_{t^{\dagger}}^{y} 0 \, dt \geq k(t^{\dagger} - t^*) > 0.
\]
This contradiction shows that no such $y$ can exist, so $h(x) \geq \frac{\smallterm}{2}$ for all $x \in [\omega^*, b]$.

If $h(a) \geq \smallterm$, we set $\omega^* = a$, and we see that $h(x) \geq \frac{\smallterm}{2}$ for all $x$ in $(a, b] = (\omega^*, b]$ as in the above proof.
\end{proof}
\begin{smallrem}
With a slight adjustment of the proof, we may also replace \eqref{eq:hminright} with the stronger inequality $h(x) \geq \smallterm$.
\end{smallrem}

\originalsectionstyle

\bibliographystyle{alpha}
\bibliography{behaviors.arXiv.refs}

\newcommand{\etalchar}[1]{$^{#1}$}
\begin{thebibliography}{JJZ{\etalchar{+}}12}

\bibitem[Con00]{Conway}
John~B. Conway.
\newblock {\em Functions of One Complex Variable, I}, volume~11 of {\em
  Graduate Texts in Mathematics}.
\newblock Springer, 2nd edition, 2000.

\bibitem[DF04]{DuFo}
David~S. Dummit and Richard~M. Foote.
\newblock {\em Abstract Algebra}.
\newblock John Wiley and Sons, Inc., 3rd ed. edition, 2004.

\bibitem[FG02]{FrGr}
Klaus Fritzsche and Hans Grauert.
\newblock {\em From Holomorphic Functions to Complex Manifolds}, volume 213 of
  {\em Graduate Texts in Mathematics}.
\newblock Springer-Verlag, 2002.

\bibitem[HLP52]{HLP52}
G.~H. Hardy, J.~E. Littlewood, and G.~P\'{o}lya.
\newblock {\em Inequalities}.
\newblock Cambridge, at the University Press, 2nd ed edition, 1952.

\bibitem[JJZ{\etalchar{+}}12]{JeffetalGenetics}
Clark~D. Jeffries, Charles~R. Johnson, Tog Zhou, Dennis~A. Simpson, and
  William~K. Kaufmann.
\newblock A flexible and qualitatively stable model for cell cycle dynamics
  including dna damage effects.
\newblock {\em Gene Regulation and Systems Biology}, (1):55 -- 66, 2012.

\bibitem[JPS13]{JPS2013Distr}
Charles~R. Johnson, Zachary Price, and Ilya~M. Spitkovsky.
\newblock The distribution of eigenvalues of doubly cyclic {$Z^+$}-matrices.
\newblock {\em Linear Algebra Appl.}, 439(11):3576--3580, 2013.

\bibitem[Mac29]{Macl}
Colin Maclaurin.
\newblock A second letter from mr. colin mc laurin, professor of mathematicks
  in the university of edinburgh and f. r. s. to martin folkes, esq; concerning
  the roots of equations, with the demonstration of other rules in algebra;
  being the continuation of the letter published in the philisophical
  trancations, no 394.
\newblock {\em Phil. Transactions}, 36:59 -- 96, 1729.

\bibitem[Nic00]{NicuNewt}
Constantin~P. Niculescu.
\newblock A new look at {N}ewton's inequalities.
\newblock {\em JIPAM. J. Inequal. Pure Appl. Math.}, 1(2):Article 17, 1--14,
  2000.

\bibitem[Nic04]{NicuInterp}
Constantin~P. Niculescu.
\newblock Interpolating {N}ewton's inequalities.
\newblock {\em Bulletin math\'{e}matique de la Soci\'{e}t\'{e} des Sciences
  Math\'{e}matiques de Roumanie}, 47 (95)(1/2):67--83, 2004.

\bibitem[Rud76]{BabyRudin}
Walter Rudin.
\newblock {\em Principles of Mathematical Analysis}.
\newblock International series in pure and applied mathematics. McGraw-Hill,
  Inc., third edition, 1976.

\end{thebibliography}
\end{document}